\theoremstyle{plain}
\newtheorem{theorem}{Theorem}
\newtheorem{lemma}[theorem]{Lemma}
\newtheorem{corollary}[theorem]{Corollary}
\theoremstyle{definition}
\newtheorem{example}[theorem]{Example}
\newtheorem{conjecture}[theorem]{Conjecture}
\newtheorem*{remark}{Remark}
\newtheorem*{question}{Question}
\numberwithin{theorem}{section}
\numberwithin{equation}{section}
\numberwithin{table}{section}
\newcommand{\NN}{\mathbb{N}}
\newcommand{\ZZ}{\mathbb{Z}}
\newcommand{\QQ}{\mathbb{Q}}
\newcommand{\RR}{\mathbb{R}}
\newcommand{\DD}{\mathbb{D}}
\newcommand{\CN}{\mathcal{C}}
\newcommand{\CP}{\mathcal{C}'}
\newcommand{\CS}{\mathcal{C}^{\sharp}}
\newcommand{\SF}{\mathbb{S}}
\newcommand{\SH}{\mathcal{S}}
\newcommand{\SD}{\mathfrak{S}'}
\newcommand{\SL}{\overline{\mathfrak{S}'}}
\newcommand{\SDG}{\mathfrak{S}}
\newcommand{\SLG}{\overline{\mathfrak{S}}}
\newcommand{\SG}{\mathbf{S}}
\newcommand{\RS}{\mathcal{R}}
\newcommand{\rr}{\mathbf{r}}
\newcommand{\GN}{\mathbf{G}}
\newcommand{\TN}{\mathbf{T}}
\newcommand{\HN}{\mathbf{H}}
\newcommand{\Ta}{\mathrm{Ta}}
\newcommand{\Tc}{\mathrm{Tc}}
\DeclareMathOperator{\denom}{denom}
\DeclareMathOperator{\pval}{v\mspace{-1.5mu}}
\DeclareMathOperator{\ord}{ord}
\DeclareMathOperator{\lcm}{lcm}
\DeclareMathOperator{\rad}{rad}
\newcommand{\Case}[1]{\mbox{\texttt{Case} #1.}}
\newcommand{\iffq}{\; \iff \;}
\newcommand{\impliesq}{\; \implies \;}
\newcommand{\impliest}{\quad \text{implies} \quad}
\newcommand{\andq}{\quad \text{and} \quad}
\newcommand{\withq}{\quad \text{with} \quad}
\newcommand{\set}[1]{\mleft\{#1\mright\}}
\newcommand{\intpart}[1]{\mleft[#1\mright]}
\newcommand{\fracpart}[1]{\mleft\{#1\mright\}}
\newcommand{\pmods}[1]{(\mathrm{mod}\;#1)}
\newcommand{\phrase}[1]{``#1"}
\newcommand{\fname}[1]{{\small \texttt{#1}}}
\newcommand{\pref}[1]{\texorpdfstring{\ref{#1}}{\ref*{#1}}}
\setlist[enumerate]{label=(\roman*),font=\rm,leftmargin=1.2cm,itemsep=1pt,
parsep=1pt,before={\parskip=1pt}}
\begin{document}

\title{On primary Carmichael numbers}
\author{Bernd C. Kellner}
\address{G\"ottingen, Germany}
\email{bk@bernoulli.org}

\subjclass[2020]{11B83 (Primary), 11N25 (Secondary)}
\keywords{Primary Carmichael number, polygonal number, taxicab number,
decomposition, sum of digits}

\begin{abstract}
The primary Carmichael numbers were recently introduced as a special subset of
the Carmichael numbers. A primary Carmichael number~$m$ has the unique property
that $s_p(m) = p$ holds for each prime factor $p$, where $s_p(m)$ is the sum of
the base-$p$ digits of $m$. The first such number is Ramanujan's famous taxicab
number $1729$. Due to Chernick, all Carmichael numbers with three factors can be
constructed by certain squarefree polynomials $U_3(t) \in \mathbb{Z}[t]$, the
simplest one being \mbox{$U_3(t) = (6t+1)(12t+1)(18t+1)$}.
We show that the values of any $U_3(t)$ obey a special decomposition for all
$t \geq 2$ and besides certain exceptions also in the case $t=1$. These cases
further imply that if all three factors of $U_3(t)$ are simultaneously odd
primes, then $U_3(t)$ is not only a Carmichael number, but also a primary
Carmichael number. Together with the exceptional cases, all Carmichael numbers
with three factors have at least the property that $s_p(m) = p$ holds for the
greatest prime factor $p$ of $m$. Subsequently, we show some connections to
taxicab and polygonal numbers, involving the number $1729$ as an example again.
\end{abstract}

\maketitle


\section{Introduction}

By Fermat's little theorem the congruence
\[
  a^{m-1} \equiv 1 \pmod{m}
\]
holds for all integers $a$ coprime to $m$, if $m$ is a prime.
Moreover, this congruence also holds for positive composite integers $m$,
which are called \emph{Carmichael numbers} and obey the following criterion.
Let $p$ always denote a prime.

\begin{theorem}[Korselt's criterion \cite{Korselt:1899} (1899)]
A positive composite integer $m$ is a Carmichael number if and only if $m$ is
squarefree and
\[
  p \mid m \impliesq p - 1 \mid m - 1.
\]
\end{theorem}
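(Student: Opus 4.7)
My plan is to prove the two directions of the biconditional separately. The main tools are Fermat's little theorem, the Chinese Remainder Theorem (CRT), and the existence of primitive roots modulo prime powers.

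For the easier ``if'' direction, assume $m$ is composite and squarefree with $p-1 \mid m-1$ for every prime $p \mid m$. For any $a$ coprime to $m$ and any prime $p \mid m$, Fermat's little theorem gives $a^{p-1} \equiv 1 \pmod{p}$; raising to the power $(m-1)/(p-1)$ yields $a^{m-1} \equiv 1 \pmod{p}$. Since $m$ is squarefree, its prime factors are pairwise coprime, so CRT patches these congruences into $a^{m-1} \equiv 1 \pmod{m}$, which is the Carmichael property.

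For the ``only if'' direction, assume $m$ is Carmichael; I must establish both squarefreeness and the divisibility $p-1 \mid m-1$ for every prime $p \mid m$. For the latter, fix $p \mid m$, write $m = p^e n$ with $\gcd(p,n)=1$, pick a primitive root $g$ modulo $p$, and use CRT to produce $a$ coprime to $m$ with $a \equiv g \pmod{p}$ and $a \equiv 1 \pmod{n}$. The Carmichael hypothesis applied to $a$, reduced modulo $p$, gives $g^{m-1} \equiv 1 \pmod{p}$, and since $g$ has order $p-1$, this forces $p-1 \mid m-1$. For squarefreeness, suppose for contradiction that $p^2 \mid m$ for some prime $p$, and repeat the construction with a primitive root $g$ modulo $p^2$, whose order in $(\ZZ/p^2\ZZ)^*$ is $p(p-1)$. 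The same reasoning gives $p(p-1) \mid m-1$, hence in particular $p \mid m-1$; but $p \mid m$, a contradiction.

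The main obstacle is the squarefreeness step, which hinges on the existence of a primitive root modulo $p^2$ for every prime $p$---including $p=2$, where $(\ZZ/4\ZZ)^*$ is cyclic of order $2$ and the argument still yields the contradiction $2 \mid m-1$ while $2 \mid m$. Once this ingredient is available, both parts of the converse become transparent applications of CRT combined with elementary order arguments.
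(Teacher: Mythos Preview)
Your proof is correct and follows the standard textbook argument for Korselt's criterion. Note, however, that the paper itself does \emph{not} prove this theorem: it is stated in the introduction as a classical result attributed to Korselt~(1899) and used as background, so there is no ``paper's own proof'' to compare against.

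One small point worth making explicit in your write-up: in the squarefreeness step you need the Carmichael congruence $a^{m-1}\equiv 1$ to hold modulo $p^2$, not just modulo~$p$, so the CRT construction should take $a\equiv g\pmod{p^2}$ (with $g$ a primitive root mod~$p^2$) and $a\equiv 1\pmod{n}$, where $m=p^e n$ and $e\ge 2$. You clearly intend this, but the phrase ``repeat the construction'' could be misread as lifting only modulo~$p$. Otherwise the argument is complete, including the case $p=2$ that you correctly singled out.
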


Subsequently, Carmichael independently derived further properties of these
numbers and computed first examples of them.

\begin{theorem}[Carmichael \cite{Carmichael:1910,Carmichael:1912} (1910,1912)]
If $m$ is a Carmichael number, then $m$ is a positive odd and squarefree
integer having at least three prime factors.
Moreover, if $p$ and $q$ are prime divisors of $m$, then
\[
  p - 1 \mid m - 1, \quad p - 1 \mid \frac{m}{p} - 1, \andq p \nmid q - 1.
\]
\end{theorem}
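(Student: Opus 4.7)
The plan is to deduce each assertion from Korselt's criterion, which already supplies squarefreeness and the divisibility $p-1 \mid m-1$ for every prime $p \mid m$. So the first claim (squarefreeness) and the first divisibility in the displayed line are free, and the remaining four properties ($m$ odd, $\omega(m) \geq 3$, $p-1 \mid m/p - 1$, and $p \nmid q-1$) all reduce to short parity/size arguments on top of Korselt.

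To see that $m$ is odd, I would argue by contradiction: if $m$ were even then, since $m$ is composite and squarefree, it would have some odd prime divisor $p$, and then Korselt would force the even number $p-1$ to divide the odd number $m-1$. For the bound $\omega(m) \geq 3$, I would suppose $m = pq$ with primes $p < q$; then $q - 1 \mid m - 1 = p(q-1) + (p-1)$ yields $q-1 \mid p-1$, impossible since $0 < p - 1 < q - 1$.

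The relation $p - 1 \mid m/p - 1$ follows from the algebraic identity $m - 1 = (p-1)(m/p) + (m/p - 1)$ together with Korselt: both $m-1$ and $(p-1)(m/p)$ are divisible by $p-1$, so the remainder $m/p - 1$ must be as well. Finally, for $p \nmid q - 1$, I would suppose the opposite for two (necessarily distinct) prime divisors $p, q$ of $m$; then $p \mid q - 1 \mid m - 1$, which combined with $p \mid m$ gives $p \mid 1$.

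None of the steps presents a genuine obstacle once Korselt is in hand; the closest thing to a subtlety is noticing the right telescoping identity in Step 3 and remembering to exploit the transitivity $p \mid q-1 \mid m-1$ in Step 4, which is what turns the last claim into a one-line contradiction.
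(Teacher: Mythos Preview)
Your argument is correct: each of the four nontrivial claims (oddness, at least three prime factors, $p-1 \mid m/p-1$, and $p \nmid q-1$) is cleanly reduced to Korselt's criterion via the standard parity, size, and telescoping manipulations, and I see no gaps.

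There is, however, nothing to compare against: the paper does not supply its own proof of this theorem. It is stated in the introduction as a classical result attributed to Carmichael~\cite{Carmichael:1910,Carmichael:1912} and used as background, while the paper's proof sections (Sections~\ref{sec:proofs-1}--\ref{sec:proofs-3}) treat only the new results. Your write-up is precisely the kind of short derivation from Korselt one would expect in a textbook treatment.
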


Denote the set of Carmichael numbers by
\begin{align*}
  \CN = \{ &561, 1105, 1729, 2465, 2821, 6601, 8911, 10\,585, 15\,841, 29\,341, \\
  &41\,041, 46\,657, 52\,633, 62\,745, 63\,973, 75\,361, 101\,101, \dotsc \}.
\end{align*}

Following~\cite{Kellner&Sondow:2021}, the Carmichael numbers can be also
characterized in a quite different and surprising way.
Let $s_p(m)$ be the sum of the base-$p$ digits of~$m$.

\begin{theorem}[Kellner and Sondow \cite{Kellner&Sondow:2021}] \label{thm:CN-prop}
An integer $m > 1$ is a Carmichael number if and only if $m$ is squarefree
and each of its prime divisors $p$ satisfies both
\[
  s_p(m) \geq p \andq s_p(m) \equiv 1 \pmod{p-1}.
\]
Moreover, $m$ is odd and has at least three prime factors,
each prime factor $p$ obeying the sharp bound
\[
  p \leq \alpha \, \sqrt{m} \withq \alpha = \sqrt{17/33} = 0.7177\dotsc.
\]
\end{theorem}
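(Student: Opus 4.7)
The plan is to start from Korselt's criterion and translate it via the elementary identity $m \equiv s_p(m) \pmod{p - 1}$, which follows from $p \equiv 1 \pmod{p - 1}$. This identity makes the Korselt congruence $p - 1 \mid m - 1$ equivalent to $s_p(m) \equiv 1 \pmod{p - 1}$. The additional requirement $s_p(m) \geq p$ precisely excludes the degenerate case $s_p(m) = 1$: in a squarefree $m$ this forces $m = p$, a prime. Thus, for composite squarefree $m$, the two stated digit conditions together are equivalent to Korselt's criterion, which yields the biconditional characterization of Carmichael numbers.

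The oddness and the lower bound of three prime factors already follow from the Carmichael theorem cited just above, but I would also rederive them cleanly from Korselt: oddness because any odd prime $q \mid m$ gives $q - 1 \mid m - 1$ with $q - 1$ even, forcing $m$ odd; and the three-prime-factor bound because $m = pq$ with $p < q$ gives $m - 1 = p(q - 1) + (p - 1)$, hence $q - 1 \mid p - 1$ and $q \leq p$, a contradiction.

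For the sharp upper bound on $p$, write $m = p k$ with $\gcd(k, p) = 1$, so that $s_p(m) = s_p(k)$. The hypothesis $s_p(k) \geq p$ together with each base-$p$ digit being at most $p - 1$ forces $k$ to have at least two nonzero digits in base $p$. If $k$ has three or more digits, then $k \geq p^2$, so $m \geq p^3$ and $p \leq m^{1/3}$, which is strictly less than $\alpha \sqrt{m}$ for every $m \geq \alpha^{-6} \approx 7.3$, comfortably below the smallest Carmichael number $561$. Otherwise $k = a p + b$ with $a \geq 1$ and $a + b \geq p$; minimization leaves either $k = 2p - 1$ (the unique case $a = 1$, $b = p - 1$) or $k \geq 3p - 2$. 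In the latter, $m \geq 3p^2 - 2p$ yields $p^2 \leq (3/7)\, m$, strictly inside the claimed bound. The borderline case $k = 2p - 1$, i.e., $m = 2p^2 - p$, gives $p^2 / m = 1/(2 - 1/p)$, which is at most $17/33$ exactly when $p \geq 17$, with equality precisely at $p = 17$.

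The principal obstacle is therefore to rule out any Carmichael number of the form $m = p(2p - 1)$ with $p < 17$. Here $2p - 1$ is prime for $p \in \{3, 7\}$ (so $m$ would have only two prime factors), nonsquarefree for $p \in \{5, 13\}$, and the only remaining candidate is $p = 11$ with $m = 231 = 3 \cdot 7 \cdot 11$; this is killed by direct inspection of Korselt, since $7 - 1 = 6$ does not divide $230$. Sharpness of $\alpha = \sqrt{17/33}$ is then witnessed by the smallest Carmichael number $m = 561 = 3 \cdot 11 \cdot 17$, where the largest prime factor $p = 17$ achieves equality.
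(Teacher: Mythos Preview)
The paper does not supply its own proof of this theorem; it is quoted from \cite{Kellner&Sondow:2019} and used as a black box throughout. So there is nothing in the present paper to compare against.

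That said, your argument is correct and is essentially the natural one. The biconditional follows cleanly from the base-$p$ congruence $m \equiv s_p(m) \pmod{p-1}$ together with the observation that, for squarefree $m$ with $p \mid m$, the value $s_p(m)=1$ forces $m=p$; this is exactly how the equivalence with Korselt's criterion is obtained in the cited source. Your derivation of the bound via the dichotomy ``$k=m/p$ has at least three base-$p$ digits'' versus ``$k=ap+b$ with $a+b\geq p$'' is the right case split: in the first case $p\leq m^{1/3}<\alpha\sqrt{m}$ for all $m\geq 561$, and in the second the minimisation over $(a,b)$ with $a\geq 2$ gives $k\geq 3p-2$, hence $p^2/m\leq 3/7<17/33$, leaving only the extremal family $k=2p-1$. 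The residual finite check for $m=p(2p-1)$ with odd primes $p<17$ is complete (the only squarefree three-factor candidate is $231=3\cdot 7\cdot 11$, killed by $6\nmid 230$), and the witness $m=561$, $p=17$ confirms sharpness. This matches the line of argument in \cite{Kellner&Sondow:2019}.
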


Define the set of \emph{primary Carmichael numbers} by
\[
  \CP := \set{m \in \SF \,:\, p \mid m \impliesq s_p(m) = p},
\]
where $\SF = \set{2,3,5,6,7,10,\dotsc}$ is the set of squarefree integers
$m > 1$. The first elements are given by
\begin{align*}
  \CP = \{ &1729, 2821, 29\,341, 46\,657, 252\,601, 294\,409, 399\,001, 488\,881, \\
  &512\,461, 1\,152\,271, 1\,193\,221, 1\,857\,241, 3\,828\,001, 4\,335\,241, \dotsc \}.
\end{align*}

The set $\CP$ (meaning \phrase{$\CN$ prime}) of primary Carmichael numbers,
which was introduced in \cite{Kellner&Sondow:2021}, is indeed a subset of the
Carmichael numbers.

\begin{theorem}[Kellner and Sondow \cite{Kellner&Sondow:2021}] \label{thm:CP-prop}
We have $\CP \subset \CN$. If $m \in \CP$, then
each prime factor $p$ of $m$ obeys the sharp bound
\[
  p \leq \alpha \, \sqrt{m} \withq \alpha = \sqrt{66\,337/132\,673} = 0.7071\dotsc.
\]
\end{theorem}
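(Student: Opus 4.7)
My plan is to handle the two claims separately. The inclusion $\CP \subset \CN$ is immediate from Theorem~\ref{thm:CN-prop}: any $m \in \CP$ is squarefree with $s_p(m) = p$ for each prime divisor $p$, so both $s_p(m) \geq p$ and $s_p(m) = p \equiv 1 \pmod{p-1}$ are automatic, matching the criterion given there.

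For the sharp bound, the key preliminary observation is the identity $s_p(m/p) = s_p(m) = p$ for each $p \mid m$. This holds because $q := m/p$ is coprime to $p$, and the base-$p$ expansion of $m = pq$ is that of $q$ shifted up one place with a trailing zero. From $s_p(q) = p$ I would extract an elementary dichotomy: either $q$ realises the minimal two-digit representation $(1,p-1)_p = 2p-1$, or $q \geq 3p-2$ as a larger two-digit solution, or $q \geq p^2$ with at least three digits. In each case $p^2/m \leq p/(2p-1)$ uniformly, with equality only when $q = 2p-1$.

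The function $p/(2p-1)$ is strictly decreasing in $p$ and equals exactly $66337/132673$ at $p = 66337$, so the stated bound is automatic whenever $p \geq 66337$. When $q \neq 2p-1$ the coarser estimate $p^2/m \leq \max(p/(3p-2),\,1/p) \leq 3/7 < 66337/132673$ suffices. The delicate remaining case is $q = 2p-1$ with $p < 66337$: here $2p-1$ must itself factor as a squarefree product of at least two distinct primes $q_i$ (so that $m$ has at least three prime factors), and for each $q_i$ one needs both the Korselt divisibility $q_i - 1 \mid m - 1 = (2p+1)(p-1)$ and the primary-digit condition $s_{q_i}(m) = q_i$. The main obstacle I expect is to show systematically that these combined arithmetic constraints fail for every prime $p < 66337$; I anticipate this reduces to a finite, though delicate, computational check over primes $p$ for which $2p-1$ is squarefree and composite.

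Sharpness is then witnessed by the explicit primary Carmichael number $m = 66337 \cdot 132673 = 66337 \cdot 181 \cdot 733$: the factorisation $132673 = 181 \cdot 733$ is into two distinct primes, Korselt's criterion follows from the identity $m-1 = (2 \cdot 66337 + 1)(66337 - 1)$ together with short congruence checks modulo $180$ and $732$, and the three digit-sum identities $s_{66337}(m) = 66337$, $s_{181}(m) = 181$, $s_{733}(m) = 733$ can be verified by direct base-$p$ expansion. At this $m$, equality $p^2/m = 66337/132673$ is attained at $p = 66337$.
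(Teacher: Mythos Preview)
This theorem is not proved in the present paper; it is quoted from~\cite{Kellner&Sondow:2019}, and the paper only revisits the sharpness witness $m = 181 \cdot 733 \cdot 66\,337 = 8\,801\,128\,801$ in the Remark at the end of Section~\ref{sec:poly}. So there is no in-paper argument to compare your attempt against. On its own merits, your proof of $\CP \subset \CN$ via Theorem~\ref{thm:CN-prop} is correct, and your reduction for the bound is sound: with $q = m/p$ the identity $s_p(q) = s_p(m) = p$ forces $q \in \{2p-1,\,3p-2,\,\dots\}$ or $q \geq p^2$; the cases $q \geq 3p-2$ and $q \geq p^2$ give $p^2/m \leq 3/7 < 66\,337/132\,673$, and for $q = 2p-1$ with $p \geq 66\,337$ monotonicity of $p/(2p-1)$ finishes the job. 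Your sharpness example is exactly the one the paper highlights.

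The genuine gap is the remaining case $q = 2p-1$ with prime $p < 66\,337$: here you must show that $p(2p-1) \notin \CP$ for every such $p$, but you only ``anticipate'' a finite computational check without carrying it out or giving a structural argument. Since the bound is asserted to be sharp exactly at $p = 66\,337$, the substance of the inequality is precisely that no smaller prime $p$ yields $p(2p-1) \in \CP$, and this is the step you have left open. There are several thousand primes below $66\,337$, so the verification is not negligible; a complete proof must either perform it explicitly or replace it with a uniform argument.
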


We further define for a given set $\SG \subseteq \CN$ the subsets
$\SG_n \subseteq \SG$, where each element of $\SG_n$ has exactly $n$ prime factors.
Let $\mathrm{S}(x)$ and $\mathrm{S}_n(x)$ count the number of elements of $\SG$
and $\SG_n$ less than $x$, respectively. We call a squarefree number $m$ with
exactly~$n$ prime factors briefly an \emph{$n$-factor} number.

The first element of $\CP_n$ for $n=3,4,5$ is given by
\begin{align*}
  1729 &= 7 \cdot 13 \cdot 19, \\
  10\,606\,681 &= 31 \cdot 43 \cdot 73 \cdot 109, \\
  4\,872\,420\,815\,346\,001 &= 211 \cdot 239 \cdot 379 \cdot 10\,711 \cdot 23\,801,
\end{align*}
respectively.

In 1939 Chernick~\cite{Chernick:1939} introduced certain squarefree polynomials
\[
  U_n(t) \in \ZZ[t] \text{ of degree } n \geq 3
\]
to construct Carmichael numbers, where $t \geq 0$ is an integer. More precisely,
he showed that $U_n(t)$ represents a Carmichael number for $t \geq 0$,
whenever all $n$ linear factors of $U_n(t)$ are simultaneously odd primes.
The simplest one of these polynomials is
\begin{equation} \label{eq:U3-simple}
  U_3(t) = (6 \, t + 1)(12 \, t + 1)(18 \, t + 1),
\end{equation}
which produces the $3$-factor Carmichael numbers
\begin{alignat*}{3}
  1729 &= 7 \cdot 13 \cdot 19 &\quad& (t=1), \\
  294\,409 &= 37 \cdot 73 \cdot 109 && (t=6), \\
  56\,052\,361 &= 211 \cdot 421 \cdot 631 && (t=35),
\end{alignat*}
being the first three examples.

At first glance, one observes that the third-smallest Carmichael number $1729$,
which is also known as Ramanujan's famous \emph{taxicab number}
(being the smallest number that is a sum of two positive cubes in two ways,
see Silverman \cite{Silverman:1993}), namely,
\begin{equation} \label{eq:taxicab}
  1729 = 1^3 + 12^3 = 9^3 + 10^3,
\end{equation}
is additionally the smallest primary Carmichael number.
Surprisingly, a closer look reveals that the other two numbers $294\,409$
and $56\,052\,361$ are also primary Carmichael numbers.
Is this pure coincidence or a hidden phenomenon?

The purpose of this paper is to show that any $U_3(t)$ has the property that
all values of $U_3(t)$ for $t \geq 2$, and apart from certain exceptions also
in the case $t=1$, lie in a certain set $\SD$ (as introduced in
Section~\ref{sec:decomp}) that generalizes the set~$\CP$.

As a main result of Section~\ref{sec:forms}, it further turns out that any
given $U_3(t)$ has the following important property: if both $U_3(t) \in \SD$
and all three linear factors of $U_3(t)$ are odd primes for a fixed $t \geq 0$,
then $U_3(t)$ represents not only a Carmichael number, but also a primary
Carmichael number.

Thus, almost all $3$-factor Carmichael numbers, which were computed by
Chernick's method so far, lie in $\CP_3$. The restriction \phrase{almost}
refers to the exceptions in the cases $t=0$ and $t=1$.

As a striking example, in 1980 Wagstaff~\cite{Wagstaff:1980} already computed
a very huge \mbox{$3$-factor} Carmichael number with $321$ decimal digits by
using $U_3(t)$ as defined by~\eqref{eq:U3-simple}, where $t$ is a $106$-digit
number. This number now awakes from a deep sleep as a primary Carmichael
number!

In 2002 Dubner~\cite{Dubner:2002} also used this $U_3(t)$ to compute the
corresponding $3$-factor Carmichael numbers up to $10^{42}$,
which are all primary.

By this means, one can even find a special $\tilde{U}_3(t)$ very quickly such
that for $t=1$ the value $M = \tilde{U}_3(1) \in \CP_3$ yields the large example
\begin{align*}
  M &= 37\,717\,531\,166\,520\,286\,365\,396\,946\,681 \\
    &= 1\,570\,642\,921 \cdot 3\,094\,633\,081 \cdot 7\,759\,909\,081,
\end{align*}
satisfying in fact the remarkable property
\[
  s_p(M) = p
\]
for each prime factor $p$ of $M$. The reader is invited to check this property
above. See Table~\ref{tbl:Ur-large} in Section~\ref{sec:forms} for the
construction.

In 1904 Dickson~\cite{Dickson:1904} stated the conjecture that a set of linear
functions $f_\nu(t) = a_\nu t + b_\nu \in \ZZ[t]$, under certain conditions,
might be simultaneously prime for infinitely many integral values of $t$.

Hence, Dickson's conjecture, as already noted by Chernick, implies that any
$U_3(t)$ produces infinitely many Carmichael numbers, and so the set $\CN$
should be infinite. This statement now transfers to the set $\CP$ of primary
Carmichael numbers.

While the question, whether there exist infinitely many Carmichael numbers,
was positively answered by Alford, Granville, and Pomerance \cite{AGP:1994}
in 1994, the related question for the primary Carmichael numbers and their
distribution is still open.

Unfortunately, several computations suggest that the properties of $U_3(t)$
as described above do not hold for $U_n(t)$ with $n \geq 4$. One may speculate
whether this causes the high proportion of primary Carmichael numbers with
exactly three prime factors among all primary Carmichael numbers, see
Table~\ref{tbl:distrib}. However, we raise an explicit conjecture on related
properties of $U_4(t)$ in Section~\ref{sec:forms}.

Going into more detail, Table~\ref{tbl:distrib} shows the distributions of
$C(x)$, $C'(x)$, and their subsets up to $10^{18}$. On the one hand, one observes
that in this range about $97\%$ of the primary Carmichael numbers have exactly
three factors, the remaining $3\%$ have four and five factors. On the other hand,
the ratio $C'_3(x)/C_3(x)$ is steadily increasing for $x$ in the range up to
$10^{18}$, implying that about $87\%$ of the $3$-factor Carmichael numbers are
primary in that range.

\begin{table}[H]
\begin{center} \small
\begin{tabular}{lrrrrrccc}
  \toprule
  \multicolumn{1}{c}{$x$} & \multicolumn{1}{c}{$C(x)$} &
  \multicolumn{1}{c}{$C_3(x)$} & \multicolumn{1}{c}{$C'(x)$} &
  \multicolumn{1}{c}{$C'_3(x)$} & \multicolumn{1}{c}{$C'_4(x)$} &
  \multicolumn{1}{c}{$C'_5(x)$} & \multicolumn{1}{c}{$C'_3/C'(x)$} &
  \multicolumn{1}{c}{$C'_3/C_3(x)$} \\
  \midrule
  $10^{3}$ & $1$ & $1$ & & & & & --- & --- \\
  $10^{4}$ & $7$ & $7$ & $2$ & $2$ & & & $1.000$ & $0.286$ \\
  $10^{5}$ & $16$ & $12$ & $4$ & $4$ & & & $1.000$ & $0.333$ \\
  $10^{6}$ & $43$ & $23$ & $9$ & $9$ & & & $1.000$ & $0.391$ \\
  $10^{7}$ & $105$ & $47$ & $19$ & $19$ & & & $1.000$ & $0.404$ \\
  $10^{8}$ & $255$ & $84$ & $51$ & $48$ & $3$ & & $0.941$ & $0.571$ \\
  $10^{9}$ & $646$ & $172$ & $107$ & $104$ & $3$ & & $0.972$ & $0.605$ \\
  $10^{10}$ & $1547$ & $335$ & $219$ & $214$ & $5$ & & $0.977$ & $0.639$ \\
  $10^{11}$ & $3605$ & $590$ & $417$ & $409$ & $8$ & & $0.981$ & $0.693$ \\
  $10^{12}$ & $8241$ & $1000$ & $757$ & $741$ & $16$ & & $0.979$ & $0.741$ \\
  $10^{13}$ & $19\,279$ & $1858$ & $1470$ & $1433$ & $37$ & & $0.975$ & $0.771$ \\
  $10^{14}$ & $44\,706$ & $3284$ & $2666$ & $2599$ & $67$ & & $0.975$ & $0.791$ \\
  $10^{15}$ & $105\,212$ & $6083$ & $5040$ & $4896$ & $144$ & & $0.971$ & $0.805$ \\
  $10^{16}$ & $246\,683$ & $10\,816$ & $9280$ & $8996$ & $282$ & $2$ & $0.969$ & $0.832$ \\
  $10^{17}$ & $585\,355$ & $19\,539$ & $17\,210$ & $16\,694$ & $514$ & $2$ & $0.970$ & $0.854$ \\
  $10^{18}$ & $1\,401\,644$ & $35\,586$ & $32\,039$ & $31\,103$ & $933$ & $3$ & $0.971$ & $0.874$ \\
  \bottomrule
\end{tabular}

\caption{\parbox[t]{20.5em}{Distributions of $C(x)$, $C'(x)$, and their subsets.\\
The ratios are rounded to three decimal places.}}
\label{tbl:distrib}
\end{center}
\end{table}

Computed Carmichael numbers and tables up to $10^{18}$ in this paper were taken
from Pinch's tables in \cite{Pinch:2007,Pinch:2008}, while the numbers up
to~$10^9$, in particular for $\CP$, were rechecked by our computations. Further
tables are given by Granville and Pomerance in \cite{Granville&Pomerance:2002},
which also rely mainly on Pinch's computations.
The used raw data files of \cite{Pinch:2008} are named \fname{carmichael-16.gz},
\fname{carmichael17.gz}, \fname{carmichael18.gz}, and \fname{car3-18.gz}.

Interestingly, the progress about the (primary) Carmichael numbers, as partially
described above, were originally initiated by a completely different context.
For the sake of completeness, we give here a short survey of some results of
\cite{Kellner:2017,Kellner&Sondow:2017,Kellner&Sondow:2018,Kellner&Sondow:2021}.

As usual, denote the Bernoulli polynomials and numbers by $B_n(x)$ and
$B_n = B_n(0)$, respectively. The polynomials $B_n(x)$ are defined by the
series (cf.~\cite[Sec.~9.1, pp.~3--4]{Cohen:2007})
\[
  \frac{z e^{xz}}{e^z - 1} = \sum_{n \geq 0} B_n(x) \frac{z^n}{n!}
  \quad (|z| < 2\pi).
\]
Define for $n \geq 1$ the denominators $\DD_n := \denom(B_n(x) - B_n)$
of the Bernoulli polynomials, which have no constant term,
\[
  B_n(x) - B_n = \sum_{k=0}^{n-1} \binom{n}{k} B_k \, x^{n-k}.
\]
These denominators are given by the notable formula
\[
  \DD_n = \prod_{s_p(n) \, \geq \, p} p
\]
and obey several divisibility properties. We have, for example,
\begin{align*}
  \rad(n+1) \mid \DD_n, &\quad \text{if $n+1$ is composite}, \\
  \DD_n = \lcm(\DD_{n+1}, \rad(n+1)), &\quad \text{if $n \geq 3$ is odd},
\end{align*}
where $\rad(n) := \prod_{p \, \mid \, n} p$. It further turns out that
all Carmichael numbers satisfy the divisibility relation
\[
  m \in \CN \impliesq m \mid \DD_m,
\]
which explains the unexpected link between Carmichael numbers and the function
$s_p(\cdot)$.

The rest of the paper is organized as follows. The main results, theorems, and
conjectures are presented in Sections~\ref{sec:decomp}~--~\ref{sec:compl}
after introducing necessary definitions and complementary results.
Subsequently, Sections~\ref{sec:proofs-1}~--~\ref{sec:proofs-3} contain the
proofs of the theorems, ordered by their dependencies.
Section~\ref{sec:taxicab} shows some connections to the taxicab numbers.
Finally, in Section~\ref{sec:poly} we give applications to the polygonal numbers.


\section{Decompositions}
\label{sec:decomp}

Let $\NN$ be the set of positive integers. The sum-of-digits function
$s_p(\cdot)$ is actually defined for any integer base $g \geq 2$ in place of
a prime~$p$. To avoid ambiguity, we define $s_1(m) := 0$ for $m \geq 0$.
For integers $g \geq 2$ and $m \geq 1$ define
\[
  \ord_g(m) := \max \set{n \geq 0 : g^n \mid m}.
\]

We say that a positive integer $m$ has an \emph{$s$-decomposition},
if there exists a decomposition in $n$ proper factors $g_\nu$ with
exponents~$e_\nu \geq 1$, the factors $g_\nu$ being strictly increasing but
not necessarily coprime, such that
\begin{equation} \label{eq:s-decomp}
  m = \prod_{\nu=1}^{n} g_\nu^{e_\nu},
\end{equation}
where each factor $g_\nu$ satisfies the \emph{sum-of-digits condition}
\begin{equation} \label{eq:s-condg}
  s_{g_\nu}(m) \geq g_\nu.
\end{equation}
Similarly, we say that \eqref{eq:s-decomp} represents a \emph{strict}
$s$-decomposition, if each factor $g_\nu$ satisfies the strict sum-of-digits
condition
\begin{equation} \label{eq:s-conds}
  s_{g_\nu}(m) = g_\nu.
\end{equation}

Accordingly, we define the sets
\begin{align*}
  \SDG &:= \set{m \in \NN : m \text{ has an $s$-decomposition}},\\
  \SD  &:= \set{m \in \NN : m \text{ has a strict $s$-decomposition}}.
\end{align*}
One computes that
\begin{align*}
  \SDG = \{ &24, 45, 48, 72, 96, 120, 144, 189, 192, 216, 224, 225, 231, 240, \\
            &280, 288, 315, 320, 325, 336, 352, 360, 378, 384, 405, 432, \dotsc \}, \\
  \SD  = \{ &45, 96, 225, 325, 405, 576, 637, 640, 891, 1225, 1377, 1408, 1536, \\
            &1701, 1729, 2025, 2541, 2821, 3321, 3751, 3825, 4225, 4608, \dotsc \}.
\end{align*}

Clearly, we have $\SD \subset \SDG$. Some examples of $s$-decompositions are
\[
  45 = 3^2 \cdot 5, \, 576 = 2^4 \cdot 6^2, \, 1729 = 7 \cdot 13 \cdot 19, \,
  2025 = 5^2 \cdot 9^2.
\]
Note that an $s$-decomposition of a number $m \in \SDG$ does not have to be unique.
Such an example of different $s$-decompositions is given by
\[
  240 = 2^4 \cdot 3 \cdot 5 = 2^2 \cdot 3 \cdot 4 \cdot 5
    = 2 \cdot 3 \cdot 5 \cdot 8 = 3 \cdot 4^2 \cdot 5,
\]
showing all possible variants.

While the definition of the set~$\SD$ widely extends the definition of the
set~$\CP$, the set~$\SDG$ widely extends the set
\[
  \SH := \set{m \in \SF \,:\, p \mid m \impliesq s_p(m) \geq p}
\]
where
\begin{align*}
  \SH = \{ &231, 561, 1001, 1045, 1105, 1122, 1155, 1729, 2002, 2093, \\
           &2145, 2465, 2821, 3003, 3315, 3458, 3553, 3570, 3655, \dotsc \}.
\end{align*}

As introduced and shown in \cite{Kellner&Sondow:2021}, the set $\SH$ has the
property that $\CN \subset \SH$. Moreover, each number $m \in \SH$ has at least
three prime factors.

The next two theorems summarize the properties of $\SDG$ and $\SD$,
which also show some connections with the Carmichael numbers.

\begin{theorem} \label{thm:s-decomp}
An $s$-decomposition of $m \in \SDG$ has the following properties:
\begin{enumerate}
\item The $s$-decomposition of $m$ has at least two factors, while $m$ has at
      least two prime divisors.
\item If $m = g_1^{e_1} \cdot g_2^{e_2}$, then $e_1 + e_2 \geq 3$.
\item If $m = g_1 \cdot g_2 \cdot g_3$ where all $g_\nu$ are odd primes,
      then its $s$-decomposition is unique. In particular, if $m \in \SD$,
      then $m \in \CP_3$.
\item If $m = g_1 \dotsm g_n$ with $n \geq 3$, where all $g_\nu$ are odd primes,
      then $m \in \SH$. Moreover, if $g_1 \dotsm g_n \in \SD$, then $m \in \CP_n$.
\item If $m = g_1^{e_1} \dotsm g_n^{e_n}$ with $n \geq 2$, then each factor
      $g_\nu$ satisfies the inequalities
      $1 < g_\nu < m^{1/(\ord_{g_\nu}(m)+1)} \leq m^{1/(e_\nu+1)} \leq \sqrt{m}$.
\end{enumerate}
\end{theorem}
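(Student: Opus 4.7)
The plan is to prove the five parts in the stated order, with each part building on its predecessors.

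For part~(i), a single-factor decomposition $m = g^e$ fails \eqref{eq:s-condg} since the base-$g$ expansion of $m$ is $1$ followed by $e$ zeros, giving $s_g(m) = 1 < g$. To upgrade this to a lower bound on the number of prime divisors, suppose $m = p^k$ is a prime power; then every candidate factor has the form $g_\nu = p^a$ with $a \geq 1$, and writing $k = qa + r$ with $0 \leq r < a$ yields $s_{p^a}(p^k) = p^r \leq p^{a-1} < p^a$, so no factor satisfies \eqref{eq:s-condg}. Hence $m \in \SDG$ has at least two distinct prime divisors, and any $s$-decomposition of $m$ has at least two factors. For part~(ii), if $e_1 = e_2 = 1$ then $m = g_1 g_2$ with $g_1 < g_2$, whose base-$g_2$ digits are $(g_1, 0)$, so $s_{g_2}(m) = g_1 < g_2$, a contradiction.

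Part~(iii) is the combinatorial heart. For $m = g_1 g_2 g_3$ with three distinct odd primes and any $s$-decomposition $m = \prod_\mu h_\mu^{f_\mu}$, each $h_\mu$ divides the squarefree $m$ and is therefore squarefree; hence $f_\mu \geq 2$ would force a repeated prime, so all $f_\mu = 1$ and the factors $h_\mu$ partition the prime set $\{g_1, g_2, g_3\}$. Now (i) rules out a one-part partition, and (ii) rules out the two-part case since the exponent sum $e_1 + e_2 = 2$ is too small. Thus the partition has three parts, each $h_\mu$ is a single prime, and the strict ordering identifies them uniquely as $g_1 < g_2 < g_3$. If additionally $m \in \SD$, the strict condition \eqref{eq:s-conds} yields $s_{g_\nu}(m) = g_\nu$ for every prime factor of $m$, which is the defining property of $\CP_3$.

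Part~(iv) is then immediate: the $g_\nu$, being odd primes whose product is $m$, are exactly the prime divisors of the squarefree integer $m$, so \eqref{eq:s-condg} (resp.~\eqref{eq:s-conds}) for each $g_\nu$ is the defining condition of $\SH$ (resp.~$\CP_n$). For part~(v), fix $\nu$, set $k := \ord_{g_\nu}(m)$, and write $m = g_\nu^k m'$ with $g_\nu \nmid m'$; then $s_{g_\nu}(m) = s_{g_\nu}(m')$. Since $s_{g_\nu}(m') \geq g_\nu$ while each base-$g_\nu$ digit is at most $g_\nu - 1$, the expansion of $m'$ must have at least two digits, and its last digit is nonzero, so $m' \geq g_\nu + 1 > g_\nu$ and hence $m > g_\nu^{k+1}$, giving $g_\nu < m^{1/(k+1)}$. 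The remaining two bounds follow from $\ord_{g_\nu}(m) \geq e_\nu \geq 1$.

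The main obstacle is the bookkeeping in part~(iii): mixed decompositions such as $g_1 \cdot (g_2 g_3)$ must be ruled out, and this is precisely where the lower bound on the exponent sum from~(ii) becomes indispensable. Everything else in the theorem reduces to short base-$g$ digit estimates together with unpacking of the relevant definitions.
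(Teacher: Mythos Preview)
Your proof is correct and follows essentially the same route as the paper's: the same digit-count contradictions for (i), (ii), and (v), and the same reduction of (iii) to (i) and (ii). If anything, your argument for (iii) is slightly more careful than the paper's---you explicitly justify why any alternative $s$-decomposition of the squarefree $m$ must have all exponents equal to $1$ (so that the factors genuinely partition the prime set), whereas the paper passes over this point silently; and in (v) you reprove inline what the paper isolates as Lemma~\ref{lem:estim-g-m}.
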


\begin{theorem} \label{thm:SD-prop}
The sets $\SDG$ and $\SD$ have the following properties:
\begin{enumerate}
\item $\CN \subset \SDG$.
\item $\CP \subset \SD \,\cap\, \CN$.
\item $\CP_3 = \SD \,\cap\, \CN_3$.
\end{enumerate}
\end{theorem}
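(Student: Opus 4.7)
The plan is to derive each of the three inclusions directly from the preceding definitions and results, using the ordinary prime factorization as the canonical $s$-decomposition of a Carmichael number.

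For part (i), I would take any $m \in \CN$ and write $m = p_1 \cdots p_n$ with $p_1 < \cdots < p_n$; this is possible because Carmichael numbers are squarefree with $n \geq 3$ distinct prime factors by Theorem~\ref{thm:CN-prop}. Setting $g_\nu = p_\nu$ and $e_\nu = 1$ yields a factorization of the form~\eqref{eq:s-decomp} with strictly increasing proper factors. The sum-of-digits condition~\eqref{eq:s-condg} at each $g_\nu$ is exactly the inequality $s_{p_\nu}(m) \geq p_\nu$ furnished by Theorem~\ref{thm:CN-prop}. Hence $m$ has an $s$-decomposition and belongs to $\SDG$.

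For part (ii), the inclusion $\CP \subset \CN$ is already granted by Theorem~\ref{thm:CP-prop}, so once more $m = p_1 \cdots p_n$ is squarefree with $n\geq 3$. The defining property of $\CP$ sharpens the inequality used in (i) to the equality $s_{p_\nu}(m) = p_\nu$, which is precisely the strict sum-of-digits condition~\eqref{eq:s-conds}. Hence the prime factorization of $m$ serves as a strict $s$-decomposition, showing $m \in \SD$ and thus $\CP \subset \SD \cap \CN$.

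For part (iii), the inclusion $\CP_3 \subset \SD \cap \CN_3$ is immediate from (ii) combined with $\CP_3 = \CP \cap \CN_3$. For the reverse inclusion, take $m \in \SD \cap \CN_3$; since Carmichael numbers are odd and squarefree, we may write $m = p_1 p_2 p_3$ with three distinct odd primes. As $m \in \SD$, Theorem~\ref{thm:s-decomp}(iii) applies and forces $m \in \CP_3$.

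The real content of the theorem is thus concentrated in two structural inputs: Theorem~\ref{thm:CN-prop}, which supplies the inequality $s_p(m) \geq p$ for every prime $p$ dividing a Carmichael number, and Theorem~\ref{thm:s-decomp}(iii), whose uniqueness assertion is what prevents a Carmichael number in $\SD \cap \CN_3$ from carrying an \emph{extraneous} strict $s$-decomposition that fails to be its prime factorization. Once these two facts are in hand, the present theorem reduces to matching definitions; the only real obstacle lies in the proof of Theorem~\ref{thm:s-decomp}(iii), not here.
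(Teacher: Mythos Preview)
Your argument for the inclusions $\CN \subseteq \SDG$, $\CP \subseteq \SD \cap \CN$, and $\CP_3 = \SD \cap \CN_3$ is essentially the paper's, and correct as far as it goes. The gap is that in this paper the symbol $\subset$ denotes \emph{strict} inclusion, and the theorem is asserting that the inclusions in (i) and (ii) are proper. You have not addressed this.

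For part~(i) strictness is routine: any of the small listed elements of $\SDG$, say $24$ or $45$, is visibly not a Carmichael number. For part~(ii), however, strictness is the genuine content of the statement, and it is not obvious. One must exhibit a Carmichael number $m$ lying in $\SD$ yet failing $s_p(m)=p$ for some prime $p\mid m$. The paper does this by invoking Theorem~\ref{thm:main}: the universal form $U_{(1,2,3)}(t)=(6t+1)(12t+1)(18t+1)$ lies in $\SD$ for all $t\ge 1$, and at $t=5$ one obtains
\[
  m = 172\,081 = 31\cdot 61\cdot 91 = 7\cdot 13\cdot 31\cdot 61 \in \CN_4,
\]
with strict $s$-decomposition $31\cdot 61\cdot 91$ but $s_7(m)=19\ne 7$, so $m\in (\SD\cap\CN)\setminus\CP$. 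Without such an example your proof of~(ii) is incomplete; and producing one without Theorem~\ref{thm:main} (or an equivalent computation) is not immediate, since the first few elements of $\SD\cap\CN$ are all primary.
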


We further define the generalized sets of $\SDG$ and $\SD$ by
\begin{align*}
  \SLG &:= \set{m \in \NN : \, \text{there exists $g \mid m$ with $s_g(m) \geq g$}}, \\
  \SL  &:= \set{m \in \NN : \, \text{there exists $g \mid m$ with $s_g(m) = g$}}.
\end{align*}

The sets $\SLG$ and $\SL$ satisfy the conditions~\eqref{eq:s-condg}
and~\eqref{eq:s-conds} for at least one proper divisor of each
of their elements, respectively. One computes that
\begin{align*}
  \SLG = \{ &6, 10, 12, 14, 15, 18, 20, 21, 22, 24, 26, 28, 30, 33, 34, 36, 38, 39,\\
            &40, 42, 44, 45, 46, 48, 50, 51, 52, 54, 56, 57, 58, 60, 62, 63, \dotsc \},\\
  \SL  = \{ &6, 10, 12, 15, 18, 20, 21, 24, 28, 33, 34, 36, 39, 40, 45, 48, 52,\\
            &57, 63, 65, 66, 68, 72, 76, 80, 85, 87, 88, 91, 93, 96, 99, 100, \dotsc \}.
\end{align*}

By the definitions and the computed examples we have the relations
\begin{equation}\label{eq:SD-rel}
  \SD \subset \SDG \subset \SLG \andq \SD \subset \SL \subset \SLG.
\end{equation}

The following two theorems show weaker and different properties of~$\SLG$
and~$\SL$ compared to Theorems~\ref{thm:s-decomp} and~\ref{thm:SD-prop}.

\begin{theorem} \label{thm:SL-elem}
A number $m \in \SLG$ and a divisor $g \mid m$ with $s_g(m) \geq g$
have the following properties:
\begin{enumerate}
\item $m$ has at least two prime divisors.
\item If $m \in \CN_3$, then $g$ is an odd prime.
\item $g$ obeys the inequalities $1 < g < m^{1/(\ord_{g}(m)+1)} \leq \sqrt{m}$.
\end{enumerate}
\end{theorem}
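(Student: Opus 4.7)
The plan is to prove (iii) first, since the bound $g < \sqrt m$ it yields will drive the arguments for (i) and (ii). For (iii), I would set $e := \ord_g(m) \geq 1$ and factor $m = n \cdot g^e$. The definition of $\ord_g$ ensures $g \nmid n$, and a straightforward digit-shift argument gives the key identity $s_g(m) = s_g(n)$. Any integer $n < g$ has a single-digit base-$g$ expansion and hence $s_g(n) = n < g$; combined with the hypothesis $s_g(n) \geq g$, this forces $n \geq g$, and then $g \nmid n$ rules out $n = g$, leaving $n \geq g + 1$. Therefore $m \geq (g+1)g^e > g^{e+1}$, which yields $g < m^{1/(e+1)}$; the final inequality $m^{1/(e+1)} \leq \sqrt m$ follows from $e \geq 1$, and $g > 1$ holds because of the convention $s_1(m) = 0 < 1$.

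For (i), I would argue by contradiction: if $m = p^k$ were a prime power, then every divisor would have the form $g = p^j$ with $1 \leq j \leq k$. Writing $k = qj + r$ with $0 \leq r < j$ gives $m = g^q \cdot p^r$ with $p^r < p^j = g$, so the base-$g$ representation of $m$ consists of the single nonzero digit $p^r$ at position $q$. Hence $s_g(m) = p^r < g$, contradicting the hypothesis.

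For (ii), let $m = p_1 p_2 p_3 \in \CN_3$ with $p_1 < p_2 < p_3$. Theorem~\ref{thm:CN-prop} provides $p_i < \sqrt m$ for each $i$. Combined with (iii), this excludes every non-prime divisor of $m$: the bound $1 < g < \sqrt m$ rules out $g \in \{1, m\}$, and any composite divisor of the form $g = p_i p_j$ would equal $m/p_k > \sqrt m$, again contradicting (iii). Hence $g$ must be one of the three odd primes $p_1, p_2, p_3$.

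The main technical point is the digit-shift identity $s_g(n g^e) = s_g(n)$ used in (iii), which for composite $g$ relies on the observation that $g^e \mid m$ if and only if the lowest $e$ base-$g$ digits of $m$ vanish---a fact easily established by iterating the congruence $m \equiv a_0 \pmod{g}$ where $a_0$ is the units digit. Once this is settled, parts (i) and (ii) reduce to elementary divisor enumeration combined with the Carmichael bound from Theorem~\ref{thm:CN-prop}.
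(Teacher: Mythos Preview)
Your proposal is correct. Parts~(i) and~(iii) follow essentially the same argument as the paper: the paper isolates~(iii) as a separate lemma (Lemma~\ref{lem:estim-g-m}) with the same digit-shift reasoning $s_g(m) = s_g(m/g^e)$, and~(i) is handled in both places by noting that a prime-power $m = p^k$ forces every divisor $g = p^j$ to have $s_g(m) = p^{k \bmod j} < g$.

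For part~(ii) you take a different route from the paper. The paper invokes Theorem~\ref{thm:s-decomp}\,(iii), which asserts that when $m = p_1 p_2 p_3$ with all $p_\nu$ odd primes the $s$-decomposition is \emph{unique}; since $m \in \CN_3$ already has the prime factorization as an $s$-decomposition (via Theorem~\ref{thm:CN-prop}), uniqueness forces any admissible $g$ to be one of the primes. Your argument instead bypasses the uniqueness machinery entirely: you combine the bound $g < \sqrt m$ from~(iii) with the Carmichael estimate $p_i < \sqrt m$ from Theorem~\ref{thm:CN-prop} to observe that any composite divisor $p_i p_j = m/p_k$ must exceed $\sqrt m$. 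This is a cleaner and more self-contained route, since it does not rely on the structure theorem for $s$-decompositions; the paper's route, on the other hand, ties~(ii) into the broader framework of $s$-decompositions developed in Theorem~\ref{thm:s-decomp}.
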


\begin{theorem} \label{thm:SL-prop}
The set $\SL \setminus \SD$ has the following properties:
\begin{enumerate}
\item $\CN \setminus \CP \not\subset \SL \setminus \SD$.
\item $\CN_3 \setminus \CP_3 \subset \SL \setminus \SD$.
\end{enumerate}
\end{theorem}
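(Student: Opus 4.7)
I would handle the two parts separately, dispatching part (ii) with the digit-sum machinery already developed and then exhibiting an explicit witness for part (i).

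For part (ii), fix $m \in \CN_3 \setminus \CP_3$ and let $p$ denote the largest prime divisor of $m$. Theorem~\ref{thm:SD-prop}(iii) yields $\CP_3 = \SD \cap \CN_3$, so $m \notin \SD$ is automatic and the task reduces to verifying $m \in \SL$. I claim $p$ itself witnesses this via the equality $s_p(m) = p$. The sharp bound $p \leq \alpha \sqrt{m}$ of Theorem~\ref{thm:CN-prop} (with $\alpha < 1$) gives $p^2 < m$, while $m$ being the product of three distinct primes each at most $p$ forces $m < p^3$. Hence the base-$p$ expansion of $m$ has exactly three digits, $m = a p^2 + b p + c$, with $a \in \{1,\ldots,p-1\}$ and $b,c \in \{0,\ldots,p-1\}$; divisibility $p \mid m$ kills $c$, so $s_p(m) = a + b \leq 2p - 2$. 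The congruence $s_p(m) \equiv 1 \pmod{p-1}$ and inequality $s_p(m) \geq p$ from Theorem~\ref{thm:CN-prop} then leave only $s_p(m) = p$ in the integer interval $[p, 2p-2]$, so $m \in \SL$ via $g = p$, and $m \in \SL \setminus \SD$ follows.

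For part (i), any counterexample must have at least four prime factors by (ii); the strategy is to exhibit some $m \in \CN \setminus \CP$ for which either $m \in \SD$ (via a strict $s$-decomposition employing a composite factor) or $m \notin \SL$. The digit analysis of (ii) weakens here: in an $n$-factor Carmichael number the base-$p$ expansion for the largest prime $p$ can carry up to $n$ nontrivial digits, so the admissible digit sums $\{p, 2p - 1, 3p - 2, \ldots\}$ predicted by the Kellner--Sondow conditions need no longer collapse to $\{p\}$. A promising search target is a $4$-factor Carmichael number $m = p_1 p_2 p_3 p_4$ with $p_1 p_2 < p_3$, since then the grouping $m = (p_1 p_2) \cdot p_3 \cdot p_4$ admits in principle a strict $s$-decomposition $s_{p_1 p_2}(m) = p_1 p_2$, $s_{p_3}(m) = p_3$, $s_{p_4}(m) = p_4$ without forcing both $s_{p_1}(m) = p_1$ and $s_{p_2}(m) = p_2$; any such $m$ outside $\CP_4$ supplies the counterexample. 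An alternative route is a $4$-factor Carmichael with $s_p(m) > p$ for every prime $p \mid m$ on which additionally no composite divisor $g$ realises $s_g(m) = g$, yielding $m \notin \SL$.

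The main obstacle is part (i). Part (ii) collapses to a single digit-sum calculation once the sharp size bound and Theorem~\ref{thm:SD-prop}(iii) are invoked, but part (i) requires producing a genuine non-example: the constraints $s_g(m) = g$ are rigid enough that many composite divisors of a candidate $m$ must be inspected before certifying $m \notin \SL$ or spotting a strict composite $s$-decomposition, so a computational sweep through Pinch's tables of $4$-factor Carmichael numbers is the natural route to locate the required witness.
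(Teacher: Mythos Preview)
Your argument for part~(ii) is correct and considerably more elementary than the paper's route. The paper deduces part~(ii) from Theorem~\ref{thm:except}, which in turn rests on Chernick's parametrization of $\CN_3$ and the main results about universal forms (Theorems~\ref{thm:main} and~\ref{thm:main2}). Your direct digit-count---bounding $p^2 < m < p^3$ for the largest prime $p$, killing the units digit via $p \mid m$, and then pinning $s_p(m)$ to the unique value in $[p,2p-2]$ allowed by the congruence $s_p(m) \equiv 1 \pmod{p-1}$---bypasses that entire apparatus and recovers the key fact $s_{p_3}(m)=p_3$ in a few lines. One small omission: you prove only the inclusion $\subseteq$, whereas the paper uses $\subset$ for strict containment; the paper closes this by noting, e.g., that $6 \in \SL \setminus \SD$ is not a Carmichael number.

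For part~(i) you have the right strategy but stop short of producing a witness, so as written the proof is incomplete. The paper takes your second route: it exhibits
\[
  m = 954\,732\,853 = 103 \cdot 109 \cdot 277 \cdot 307,
\]
the smallest element of $\CS_4$ (so $s_p(m) \neq p$ for every prime $p \mid m$ by construction), and then verifies by direct computation that $s_g(m) \neq g$ for each of the ten remaining composite proper divisors~$g$, concluding $m \notin \SL$. Thus the computational sweep you anticipate is exactly what is done, and the very first exceptional $4$-factor Carmichael number already suffices.
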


\begin{remark}
Theorems~\ref{thm:SL-elem} (ii) and~\ref{thm:SL-prop} (ii), and the
properties of the set $\CP_3$ imply that all $3$-factor Carmichael numbers have
the following property: every number $m \in \CN_3$ satisfies the strict
sum-of-digits condition~\eqref{eq:s-conds} for at least one prime factor of $m$.
This will be stated later more precisely; see Theorems~\ref{thm:main},
\ref{thm:main2}, and~\ref{thm:except}.
\end{remark}

If one could show the open question, whether the set $\CP$ is infinite, then
Theorem~\ref{thm:SD-prop} would imply that $\SD$ is also infinite. Fortunately,
the infinitude of $\SD$ can be shown independently of the set $\CP$.

\begin{theorem} \label{thm:SD-inf}
The set $\SD$ is infinite.
\end{theorem}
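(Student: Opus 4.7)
My plan is to derive the infinitude of $\SD$ as an essentially immediate corollary of the paper's central structural result on Chernick's polynomial. Concretely, I would take the simplest Chernick polynomial $U_3(t) = (6t+1)(12t+1)(18t+1)$ from \eqref{eq:U3-simple}. The crucial input, announced in the introduction and established later as Theorem~\ref{thm:main}, is that for every integer $t \geq 2$ the value $U_3(t)$ lies in $\SD$, with the ordered triple of linear factors $(6t+1,\,12t+1,\,18t+1)$ furnishing a strict $s$-decomposition with all exponents equal to $1$. I emphasise that this is an \emph{unconditional} statement: it concerns only the three sum-of-digits identities $s_g(U_3(t)) = g$ for $g \in \{6t+1,\,12t+1,\,18t+1\}$, and requires no primality assumption on the linear factors.

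Granting this input, the proof reduces to a one-line observation. Since $U_3$ is a cubic polynomial in $t$ with positive leading coefficient, the map $t \mapsto U_3(t)$ is strictly increasing for $t \geq 2$, so that $U_3(2) < U_3(3) < U_3(4) < \cdots$ is an infinite strictly increasing sequence of positive integers, all lying in $\SD$. Consequently $\SD$ is infinite.

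The real work, which belongs to the proof of Theorem~\ref{thm:main} rather than to that of Theorem~\ref{thm:SD-inf}, is verifying the three strict sum-of-digits identities uniformly in $t$. I would handle this by performing, for each base $g \in \{6t+1,\,12t+1,\,18t+1\}$, an explicit long division of $U_3(t) \in \ZZ[t]$ by $g \in \ZZ[t]$, reading off the resulting digits $d_i(t) \in \ZZ[t]$ of $U_3(t)$ in base $g$, verifying that $0 \leq d_i(t) < g$ for all $t \geq 2$, and confirming that $\sum_i d_i(t) = g$. The cutoff $t \geq 2$ then reflects precisely the range of $t$ for which all resulting digits remain nonnegative, with $t = 1$ corresponding to the small-$t$ exception mentioned in the introduction. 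None of this, however, is needed for the present statement: once Theorem~\ref{thm:main} is in place, Theorem~\ref{thm:SD-inf} follows for free.
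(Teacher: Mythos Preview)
Your proposal is correct and follows exactly the same route as the paper: invoke Theorem~\ref{thm:main} for the simplest Chernick form $U_{\rr}(t)=(6t+1)(12t+1)(18t+1)$ with $\rr=(1,2,3)$, and observe that the resulting strictly increasing sequence of values lies in $\SD$. The only cosmetic difference is that for this particular $\rr$ one has $\tau=1$ (since $\ell=0=\sigma_3-\sigma_1$), so the paper starts the sequence at $t\geq 1$ rather than your $t\geq 2$; this is immaterial for the infinitude claim.
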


The relations in \eqref{eq:SD-rel} immediately imply the following corollary.

\begin{corollary}
The sets $\SDG$, $\SLG$, and $\SL$ are infinite.
\end{corollary}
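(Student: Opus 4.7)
The proof is a direct set-theoretic consequence of results already in hand, so the plan reduces to assembling two ingredients. First I would invoke Theorem~\ref{thm:SD-inf}, which guarantees that $\SD$ contains infinitely many positive integers. Second I would appeal to the inclusion chains recorded in~\eqref{eq:SD-rel}, namely
\[
  \SD \subset \SDG \subset \SLG \andq \SD \subset \SL \subset \SLG,
\]
which have already been justified from the defining conditions~\eqref{eq:s-condg} and~\eqref{eq:s-conds} of the four sets together with the observation that a (strict) $s$-decomposition in the sense of~\eqref{eq:s-decomp} witnesses, in particular, a single divisor $g \mid m$ satisfying the corresponding sum-of-digits condition.

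Combining these two ingredients, the infinite set $\SD$ sits inside each of $\SDG$, $\SL$, and $\SLG$ (for the last one via either chain), so every such superset inherits the infinitude of $\SD$. There is no genuine obstacle to overcome: the corollary is stated precisely as the immediate formal by-product of Theorem~\ref{thm:SD-inf} and the containments~\eqref{eq:SD-rel}, and no additional construction, estimate, or case analysis is required beyond citing these two facts.
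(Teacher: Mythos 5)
Your proposal is correct and matches the paper exactly: the corollary is obtained there by combining Theorem~\ref{thm:SD-inf} with the inclusions $\SD \subset \SDG \subset \SLG$ and $\SD \subset \SL \subset \SLG$ from~\eqref{eq:SD-rel}, precisely as you do. No gaps.
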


Finally, we define the subsets $\SDG_*$ and $\SD_*$ of the sets $\SDG$ and $\SD$,
respectively. Each element $m \in \SDG_*$ (respectively, $m \in \SD_*$) has the
property that the prime factorization of $m$ equals a (strict) $s$-decomposition.
The definitions are given as
\begin{align*}
  \SDG_* &:= \set{m \in \NN_{\geq 2} \,:\, p \mid m \impliesq s_p(m) \geq p}, \\
  \SD_*  &:= \set{m \in \NN_{\geq 2} \,:\, p \mid m \impliesq s_p(m) = p}.
\end{align*}
By Theorem~\ref{thm:CN-prop} and the definition of the set~$\CP$, we have the
relations
\[
  \CN \subset \SDG_* \subset \SDG \andq \CP \subset \SD_* \subset \SD.
\]

While for a given number~$m$ the determination of its $s$-decomposition may be
difficult due to searching for suitable factors (actually, this problem can be
translated into a system of linear equations), the sets $\SDG_*$ and $\SD_*$
can be computed quite easily by checking only prime factorizations.
The first numbers that do not have a \emph{trivial} (strict) $s$-decomposition
are given as follows.
\begin{align*}
  \SDG \setminus \SDG_* &= \set{280, 378, 640, 1134, 1280, 1408, 1430, 2464, 2520, 2816, \dotsc},\\
  \SD \setminus \SD_*   &= \set{96, 225, 576, 640, 1225, 1377, 1408, 1536, 1701, 2025, \dotsc}.
\end{align*}

Let $S(x)$ count the number of elements of $\SDG$ less than $x$; analogously
define this notation for related sets of $\SDG$. Table~\ref{tbl:distrib2} shows
their distributions compared to $C'(x)$ and $C(x)$.

\begin{table}[H]
\begin{center} \small
\begin{tabular}{l*{8}{r}}
  \toprule
  \multicolumn{1}{c}{$x$} & \multicolumn{1}{c}{$C'(x)$} & \multicolumn{1}{c}{$C(x)$} &
  \multicolumn{1}{c}{$S_*'(x)$} & \multicolumn{1}{c}{$S_*(x)$} &
  \multicolumn{1}{c}{$S'(x)$} & \multicolumn{1}{c}{$S(x)$} &
  \multicolumn{1}{c}{$\overline{S'}(x)$} & \multicolumn{1}{c}{$\overline{S}(x)$}\\
  \midrule
  $10^1$ & & & & & & & $1$ & $1$\\
  $10^2$ & & & $1$ & $5$ & $2$ & $5$ & $32$ & $60$ \\
  $10^3$ & & $1$ & $5$ & $53$ & $9$ & $56$ & $220$ & $742$ \\
  $10^4$ & $2$ & $7$ & $13$ & $477$ & $34$ & $532$ & $1401$ & $8050$ \\
  $10^5$ & $4$ & $16$ & $32$ & $4147$ & $100$ & $4837$ & $8388$ & $84\,057$ \\
  $10^6$ & $9$ & $43$ & $62$ & $35\,827$ & $254$ & 43\,981 & $51\,333$ & $864\,438$ \\
  \bottomrule
\end{tabular}

\caption{\parbox[t]{15.3em}{Distributions of $C'(x)$, $C(x)$, $S_*'(x)$, $S_*(x)$,
$S'(x)$, $S(x)$, $\overline{S'}(x)$, and $\overline{S}(x)$.}}
\label{tbl:distrib2}
\end{center}
\end{table}

At first glance, a lower bound for the growth of $S'(x)$ is given by $O(x^{1/3})$,
which will be implied by Theorem~\ref{thm:main} later. We show this lower bound
with explicit and simple constants.

\begin{theorem} \label{thm:SD-bound}
There is the estimate
\[
  S'(x) > \frac{1}{11} \, x^{1/3} - \frac{1}{3} \quad (x \geq 1).
\]
\end{theorem}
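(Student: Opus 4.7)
The plan is to exhibit an explicit subfamily of $\SD$ of cubic growth and then count its members up to $x$. A convenient choice is the family
\[
  m_k := k^2\,(2k-1), \quad k \geq 3 \text{ odd},
\]
whose first values $m_3 = 45$, $m_5 = 225$, $m_7 = 637$, $m_9 = 1377$ all appear in the initial list of $\SD$ given in Section~\ref{sec:decomp}.

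First, I would verify $m_k \in \SD$ by exhibiting the strict $s$-decomposition $m_k = k^2 \cdot (2k-1)^1$ with $g_1 = k < g_2 = 2k-1$ and exponents $e_1 = 2$, $e_2 = 1$. In base $k$, since $2k-1 = 1 \cdot k + (k-1)$, the number $m_k$ has base-$k$ digits $(1, k-1, 0, 0)_k$ summing to $k$. In base $q := 2k-1$, the algebraic identity
\[
  k^2 = \tfrac{k-1}{2}\,q + \tfrac{3k-1}{2}
\]
(whose quotient and remainder are nonnegative integers strictly less than $q$ for every odd $k \geq 3$, and integers precisely because $k$ is odd) shows that $m_k = k^2 q$ has base-$q$ digits $\bigl(\tfrac{k-1}{2}, \tfrac{3k-1}{2}, 0\bigr)_q$ summing to $q$.

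Since the sequence $(m_k)$ is strictly increasing, its members are pairwise distinct. The crude estimate $m_k = 2k^3 - k^2 < 2k^3$ gives $m_k \leq x$ whenever $k \leq (x/2)^{1/3}$, so counting odd $k$ in $[3, (x/2)^{1/3}]$ yields
\[
  S'(x) \geq \tfrac{1}{2}\bigl((x/2)^{1/3} - 3\bigr) = \tfrac{x^{1/3}}{2^{4/3}} - \tfrac{3}{2}
\]
for $x \geq 54$. A short numerical comparison shows that this is already stronger than the claimed $x^{1/3}/11 - 1/3$ once $x \geq 56$. The finitely many remaining values $1 \leq x < 56$ are handled by inspection: the right-hand side stays negative for $x < 45$ (where $S'(x) \geq 0$ trivially), and for $45 \leq x < 56$ one has $S'(x) \geq 1$ via $m_3 = 45 \in \SD$, which beats the right-hand side (bounded above by $55^{1/3}/11 - 1/3 < 0.02$).

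The main obstacle is the verification of the base-$q$ digit identity; once one observes that the oddness of $k$ is exactly what makes $(k-1)/2$ and $(3k-1)/2$ integers, short arithmetic gives the required digit-range inequalities. The stated constants $1/11$ and $1/3$ are deliberately loose, and the argument comfortably absorbs the discrepancy with the sharper effective constant $1/2^{4/3} \approx 0.397$.
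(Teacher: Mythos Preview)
Your argument is correct and takes a genuinely different route from the paper. The paper exhibits elements of $\SD$ via the Chernick form $g(t)=(6t+1)(12t+1)(18t+1)$ for $t\ge 1$, invoking Theorem~\ref{thm:main} to guarantee that these values lie in $\SD$; the cubic leading coefficient $1296$ is essentially where the constant $1/11\approx 1296^{-1/3}$ comes from. You instead produce a two--factor family $m_k=k^2(2k-1)$ (odd $k\ge 3$) and verify the strict $s$--decomposition by hand in two lines of digit arithmetic. Your approach is more elementary---it does not depend on the machinery of Section~\ref{sec:forms}---and even yields a sharper effective constant $2^{-4/3}\approx 0.397$ in place of $1/11$. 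The paper's route, on the other hand, exhibits the bound as a direct corollary of the central structural theorem, tying it to the main narrative.

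One small slip: you assert $S'(x)\ge 1$ for $45\le x<56$, but $S'(x)$ counts elements of $\SD$ strictly less than $x$, so $S'(45)=0$. This is harmless, since at $x=45$ the right-hand side $45^{1/3}/11-1/3\approx -0.01$ is still negative; simply shift the case boundary (the right-hand side in fact stays negative for all $x<(11/3)^3\approx 49.3$, and $S'(x)\ge 1$ for all $x>45$).
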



\section{Exceptional Carmichael Numbers}
\label{sec:except}

We introduce the set of \emph{exceptional Carmichael numbers} by
\[
  \CS := \set{m \in \CN \,:\, p \mid m \impliesq s_p(m) \neq p}.
\]
By definition we have
\[
   \CS \subseteq \CN \setminus \CP \andq
   \CS_n \subseteq \CN_n \setminus \CP_n \quad (n \geq 3).
\]
The first numbers in $\CS$ are
\begin{align*}
  173\,085\,121 &= 11 \cdot 31 \cdot 53 \cdot 61 \cdot 157, \\
  321\,197\,185 &= 5 \cdot 19 \cdot 23 \cdot 29 \cdot 37 \cdot 137, \\
  455\,106\,601 &= 19 \cdot 41 \cdot 53 \cdot 73 \cdot 151.
\end{align*}

In view of Theorem~\ref{thm:SL-prop}, the special properties of
the $3$-factor Carmichael numbers can be now restated as follows.

\begin{theorem} \label{thm:CS-3}
We have $\CS_3 = \emptyset$.
\end{theorem}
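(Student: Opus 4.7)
The plan is to derive this as a direct corollary of Theorems~\ref{thm:SL-elem} and~\ref{thm:SL-prop}, together with the definition of the set $\CP_3$. Fix any $m \in \CN_3$; the goal is to exhibit a prime factor $p$ of $m$ with $s_p(m) = p$, since this is precisely the negation of $m \in \CS$.

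I would split into two cases according as $m$ is primary or not. If $m \in \CP_3$, then by the very definition of the set $\CP$, \emph{every} prime divisor $p$ of $m$ satisfies $s_p(m) = p$, and in particular $m \notin \CS$. Otherwise $m \in \CN_3 \setminus \CP_3$, and Theorem~\ref{thm:SL-prop}(ii) yields $m \in \SL \setminus \SD$, so in particular $m \in \SL$; unfolding the definition of $\SL$, there exists some divisor $g \mid m$ with $s_g(m) = g$.

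At this point the key leverage comes from Theorem~\ref{thm:SL-elem}(ii), which, applied to the $3$-factor Carmichael number $m$, forces this divisor $g$ to be an odd prime. Hence $g$ is itself a prime factor of $m$ satisfying the strict sum-of-digits condition $s_g(m) = g$, and therefore $m \notin \CS$. Combining both cases gives $\CN_3 \cap \CS = \emptyset$, i.e.\ $\CS_3 = \emptyset$.

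The argument is essentially a syllogism from the previously established results, so there is no substantive obstacle; the only care needed is to verify that all appeals to the earlier theorems are legitimate, namely that Theorem~\ref{thm:SL-prop}(ii) applies whenever $m \in \CN_3 \setminus \CP_3$ (a direct match of hypotheses), and that Theorem~\ref{thm:SL-elem}(ii) applies to the divisor $g$ produced from membership in $\SL$ (which requires only $m \in \CN_3$ and $s_g(m) \geq g$, both of which are in hand).
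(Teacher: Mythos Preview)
Your argument is correct and is exactly the syllogism the paper sketches in the Remark following Theorem~\ref{thm:SL-prop}. The paper's formal proof in Section~\ref{sec:proofs-3} takes a slightly more direct route: rather than passing through the abstract membership $m \in \SL$ and then invoking Theorem~\ref{thm:SL-elem}(ii) to upgrade the divisor to a prime, it appeals immediately to Theorem~\ref{thm:except}, which already identifies the \emph{largest} prime factor $p_3$ of $m \in \CN_3 \setminus \CP_3$ as one satisfying $s_{p_3}(m) = p_3$. Since Theorem~\ref{thm:SL-prop}(ii) is itself proved via Theorem~\ref{thm:except}, your route and the paper's are logically equivalent; the paper's version simply avoids the detour through $\SL$ and Theorem~\ref{thm:SL-elem}(ii), at the cost of citing the heavier structural result directly.
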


In the case of the $4$-factor Carmichael numbers, it seems that such
\emph{exceptions} occur very rarely.
Indeed, the set $\CS_4$ contains only four numbers below $10^{18}$:
\begin{align*}
  954\,732\,853 &= 103 \cdot 109 \cdot 277 \cdot 307, \\
  54\,652\,352\,931\,793 &= 1013 \cdot 2377 \cdot 2729 \cdot 8317, \\
  2\,948\,205\,156\,573\,601 &= 2539 \cdot 8101 \cdot 11\,551 \cdot 12\,409, \\
  456\,691\,406\,989\,839\,841 &= 8737 \cdot 31\,981 \cdot 38\,377 \cdot 42\,589.
\end{align*}

As a consequence of Theorem~\ref{thm:CN-prop}, each prime factor $p$ of $m \in \CS$
must satisfy both conditions $s_p(m) \geq 2p-1$ and $s_p(m) \equiv 1 \pmod{p-1}$.
Actually, one verifies that the first four numbers $m \in \CS_4$, as listed above,
even satisfy the condition
\[
  s_p(m) = 2p-1
\]
for each prime factor $p$ of $m$.

The $4$-factor Carmichael numbers seem to also play a particular role like the
$3$-factor Carmichael numbers. This will be discussed in the next section.
Tables~\ref{tbl:distrib3} and~\ref{tbl:distrib4} illustrate the distributions of
the sets~$\CS_n$ and~$\CN_n$, respectively. One also finds Table~\ref{tbl:distrib4}
in~\cite{Granville&Pomerance:2002}, but with values given up to $10^{16}$.

\begin{table}[H]
\begin{center} \small
\begin{tabular}{lrc*{7}{r@{\hspace*{1em}}}}
  \toprule
  \multicolumn{1}{c}{$x$} &
  \multicolumn{1}{c}{$C^{\sharp}(x)$} &
  \multicolumn{1}{c}{\hspace*{-0.8em}$C^{\sharp}_4(x)$} &
  \multicolumn{1}{c}{\hspace*{-0.8em}$C^{\sharp}_5(x)$} &
  \multicolumn{2}{c}{} & \multicolumn{1}{c}{$\dotsc$} & \multicolumn{2}{c}{} &
  \multicolumn{1}{r}{\hspace*{-2em}$C^{\sharp}_{11}(x)$}\\
  \midrule
  $10^{9}$  & $11$ & $1$ & $7$ & $3$ \\
  $10^{10}$ & $48$ & $1$ & $19$ & $27$ & $1$ \\
  $10^{11}$ & $169$ & $1$ & $49$ & $94$ & $25$ \\
  $10^{12}$ & $590$ & $1$ & $104$ & $346$ & $135$ & $4$ \\
  $10^{13}$ & $1780$ & $1$ & $194$ & $899$ & $622$ & $63$ & $1$ \\
  $10^{14}$ & $5456$ & $2$ & $397$ & $2326$ & $2252$ & $456$ & $23$ \\
  $10^{15}$ & $16\,245$ & $2$ & $692$ & $5482$ & $7504$ & $2420$ & $145$ \\
  $10^{16}$ & $47\,171$ & $3$ & $1227$ & $12\,149$ & $22\,287$ & $10\,293$ & $1189$ & $23$ \\
  $10^{17}$ & $136\,704$ & $3$ & $2205$ & $26\,464$ & $61\,640$ & $38\,886$ & $7187$ & $318$ & $1$ \\
  $10^{18}$ & $386\,066$ & $4$ & $3713$ & $54\,128$ & $158\,276$ & $131\,641$ & $35\,472$ & $2785$ & $47$ \\
  \bottomrule
\end{tabular}

\caption{Distributions of $C^{\sharp}(x)$ and $C^{\sharp}_4(x),
\dots, C^{\sharp}_{11}(x)$.}
\label{tbl:distrib3}
\end{center}
\end{table}

\begin{table}[H]
\begin{center} \scriptsize
\begin{tabular}{l*{10}{r@{\hspace*{0.6em}}}}
  \toprule
  \multicolumn{1}{c}{$x$} &
  \multicolumn{1}{c}{$C(x)$} & \multicolumn{1}{c}{$C_3(x)$} &
  \multicolumn{1}{c}{$C_4(x)$} & \multicolumn{1}{c}{$C_5(x)$} &
  \multicolumn{5}{c}{$\dotsc$} & \multicolumn{1}{r}{\hspace*{-2.3em}$C_{11}(x)$} \\
  \midrule
  $10^{3}$  & $1$ & $1$ \\
  $10^{4}$  & $7$ & $7$ \\
  $10^{5}$  & $16$ & $12$ & $4$ \\
  $10^{6}$  & $43$ & $23$ & $19$ & $1$ \\
  $10^{7}$  & $105$ & $47$ & $55$ & $3$ \\
  $10^{8}$  & $255$ & $84$ & $144$ & $27$ \\
  $10^{9}$  & $646$ & $172$ & $314$ & $146$ & $14$ \\
  $10^{10}$ & $1547$ & $335$ & $619$ & $492$ & $99$ & $2$ \\
  $10^{11}$ & $3605$ & $590$ & $1179$ & $1336$ & $459$ & $41$ \\
  $10^{12}$ & $8241$ & $1000$ & $2102$ & $3156$ & $1714$ & $262$ & $7$ \\
  $10^{13}$ & $19\,279$ & $1858$ & $3639$ & $7082$ & $5270$ & $1340$ & $89$ & $1$ \\
  $10^{14}$ & $44\,706$ & $3284$ & $6042$ & $14\,938$ & $14\,401$ & $5359$ & $655$ & $27$ \\
  $10^{15}$ & $105\,212$ & $6083$ & $9938$ & $29\,282$ & $36\,907$ & $19\,210$ & $3622$ & $170$ \\
  $10^{16}$ & $246\,683$ & $10\,816$ & $16\,202$ & $55\,012$ & $86\,696$ & $60\,150$ & $16\,348$ & $1436$ & $23$ \\
  $10^{17}$ & $585\,355$ & $19\,539$ & $25\,758$ & $100\,707$ & $194\,306$ & $172\,234$ & $63\,635$ & $8835$ & $340$ & $1$ \\
  $10^{18}$ & $1\,401\,644$ & $35\,586$ & $40\,685$ & $178\,063$ & $414\,660$ & $460\,553$ & $223\,997$ & $44\,993$ & $3058$ & $49$ \\
  \bottomrule
\end{tabular}

\caption{Distributions of $C(x)$ and $C_3(x), \dots, C_{11}(x)$.}
\label{tbl:distrib4}
\end{center}
\end{table}


\section{Universal Forms}
\label{sec:forms}

Chernick \cite{Chernick:1939} introduced so-called \emph{universal forms},
which are squarefree polynomials in $\ZZ[t]$, by
\begin{equation} \label{eq:Un-def}
  U_n(t) := \prod_{\nu=1}^{n} \, (a_\nu \, t + b_\nu) \quad (n \geq 3)
\end{equation}
with coefficients $a_\nu, b_\nu \in \NN$ satisfying
\begin{equation} \label{eq:Un-prop}
  U_n(t) \equiv 1 \pmod{a_\nu \, t + b_\nu - 1} \quad (1 \leq \nu \leq n)
\end{equation}
for all integers $t \geq 0$ except for the cases when $t=0$ and $b_\nu=1$.
His results can be summarized as follows.

\begin{theorem}[Chernick \cite{Chernick:1939} (1939)] \label{thm:Chernick}
For each $n \geq 3$ there exist universal forms $U_n(t)$ with computable
coefficients $a_\nu, b_\nu \in \NN$. Moreover, for fixed $n \geq 3$ and
$t \geq 0$, a universal form $U_n(t)$ represents a Carmichael number in $\CN_n$,
if each factor $a_\nu \, t + b_\nu$ is an odd prime.
\end{theorem}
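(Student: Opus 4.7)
The plan is to split the theorem into its two assertions and treat them in the opposite order from how they are stated, disposing first of the Carmichael-number conclusion (an immediate consequence of Korselt's criterion) and then sketching the construction of admissible coefficients.

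For the implication that $U_n(t) \in \CN_n$ whenever each factor $a_\nu t + b_\nu$ is an odd prime, I would set $p_\nu := a_\nu t + b_\nu$ at the fixed value of $t$; these are pairwise distinct since the $a_\nu$ are distinct positive integers (for $t \geq 1$) or the $b_\nu$ are (for $t = 0$, in which case~\eqref{eq:Un-prop} is assumed to hold unexceptionally). Consequently $m := U_n(t) = \prod_{\nu=1}^{n} p_\nu$ is an odd squarefree composite integer with exactly $n$ prime factors. The universal-form condition~\eqref{eq:Un-prop} reads precisely $m \equiv 1 \pmod{p_\nu - 1}$, so $p_\nu - 1 \mid m - 1$ for every $\nu$, and Korselt's criterion (Theorem~1) immediately gives $m \in \CN_n$.

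For the existence assertion, I would restrict to the natural choice $b_\nu = 1$, which turns $p_\nu - 1$ into $a_\nu t$ and collapses~\eqref{eq:Un-prop} to the single polynomial divisibility $a_\nu t \mid U_n(t) - 1$ for every $t \geq 1$. Since $U_n(0) = \prod b_\nu = 1$, the polynomial $U_n(t) - 1$ is divisible by $t$ in $\ZZ[t]$, and
\[
  g(t) := \frac{U_n(t) - 1}{t} = \sum_{k=1}^{n} e_k(a_1, \dots, a_n)\, t^{k-1},
\]
where $e_k$ denotes the $k$-th elementary symmetric polynomial in the $a_\nu$. A sufficient arithmetic condition for~\eqref{eq:Un-prop} is therefore
\[
  a_\nu \mid e_k(a_1, \dots, a_n) \qquad (1 \leq \nu, k \leq n).
\]
Writing $a_\nu = d_\nu L$ with distinct positive integers $d_1, \dots, d_n$ and a common multiplier $L$, this reduces to $d_\nu \mid L^{k-1} e_k(d_1, \dots, d_n)$; the cases $k \geq 2$ are automatic as soon as $\lcm(d_1, \dots, d_n) \mid L$, so the only real restriction is the $k = 1$ case, namely that every $d_\nu$ divides $d_1 + \dots + d_n$. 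Such tuples exist for every $n \geq 3$: the base case $(1, 2, 3)$ with $L = 6$ recovers Chernick's classical $(6t+1)(12t+1)(18t+1)$, and inductively, whenever $(d_1, \dots, d_n)$ is valid with sum $S$, the tuple $(d_1, \dots, d_n, S)$ is valid with sum $2S$. Taking $L$ additionally even forces each $a_\nu t + 1$ to be odd, completing the construction.

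The main obstacle to this plan is producing, uniformly in $n$, the combinatorial data $(d_1, \dots, d_n)$ of distinct positive integers each dividing their common sum; the doubling induction just described resolves this cleanly, after which the remainder of the verification is polynomial bookkeeping.
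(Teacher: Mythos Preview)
The paper does not prove this theorem; it is quoted as Chernick's 1939 result and left at that, so there is no in-paper proof to compare against. Your argument is correct, and in fact your doubling induction $(d_1,\dots,d_n)\mapsto(d_1,\dots,d_n,\sum d_i)$ starting from $(1,2,3)$, with $L=\lcm(d_1,\dots,d_n)$, reproduces precisely Chernick's own one-parameter family $a_\nu\in\{6,12,18,36,72,\dots,9\cdot 2^{n-2}\}$, i.e.\ $U_n(t)=(6t+1)(12t+1)\prod_{i=1}^{n-2}(9\cdot 2^{i}t+1)$. So your ``blind'' reconstruction lands on the same construction Chernick used.

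One minor point in the first half: you justify that the $p_\nu$ are pairwise distinct by asserting the $a_\nu$ are distinct, but the paper's definition~\eqref{eq:Un-def} only requires $U_n(t)$ to be squarefree in $\ZZ[t]$, which allows two factors to share a leading coefficient while differing in constant term. The conclusion you want still holds: since the linear factors are distinct as polynomials, any coincidence $a_\mu t+b_\mu=a_\nu t+b_\nu$ forces $a_\mu\neq a_\nu$ and pins $t$ to a single value; at such a $t$ the product acquires a square factor and cannot be Carmichael, so under the stated hypothesis the primes are automatically distinct. In your own construction this issue never arises, as the $a_\nu=d_\nu L$ are distinct by design.
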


\begin{remark}
Chernick required to replace $t$ by $2t$, if all coefficients $a_\nu$ and $b_\nu$
are odd; otherwise, odd values of~$t$ would cause even values of $U_n(t)$.
Actually, this already happens, if one pair $(a_\nu, b_\nu)$ consists of odd
integers. However, we explicitly left $t$ unchanged for our purpose. We fix this
problem by requiring that a factor $a_\nu \, t + b_\nu$ must be an \emph{odd}
prime instead of a prime, as stated in Theorems~\ref{thm:Chernick},
\ref{thm:Chernick2}, and~\ref{thm:main}.
\end{remark}

For the special case $n=3$ Chernick gave a general construction of $U_n(t)$,
whereas we use a more suitable formulation by introducing several definitions,
as follows.

Define the set
\[
  \RS := \set{\rr=(r_1,r_2,r_3) \in \NN^3 : r_1 < r_2 < r_3, \,
    \text{being pairwise coprime}}
\]
and the elementary symmetric polynomials for $\rr \in \RS$ as
\begin{align}
  \sigma_1(\rr) &:= r_1 + r_2 + r_3, \label{eq:sigma-1} \\
  \sigma_2(\rr) &:= r_1 r_2 + r_1 r_3 + r_2 r_3, \label{eq:sigma-2} \\
  \sigma_3(\rr) &:= r_1 r_2 r_3. \label{eq:sigma-3}
\end{align}
We implicitly use the abbreviation $\sigma_\nu$ for $\sigma_\nu(\rr)$, if there
is no ambiguity in context. For $\rr \in \RS$ define the parameter $\ell$ with
$0 \leq \ell < \sigma_3$ satisfying
\begin{equation} \label{eq:param-ell}
  \ell \equiv - \frac{\sigma_1}{\sigma_2} \pmod{\sigma_3}.
\end{equation}

One easily verifies the following parity relations for $\rr \in \RS$.
\begin{align}
\shortintertext{If $\sigma_3$ is odd, then}
  \sigma_1 \equiv \sigma_2 \equiv \sigma_3 &\equiv 1 \pmod{2};
  \label{eq:sigma-odd} \\
\shortintertext{otherwise,}
  \ell \equiv \sigma_1 \equiv \sigma_2+1 \equiv \sigma_3 &\equiv 0 \pmod{2}.
  \label{eq:sigma-even}
\end{align}

\begin{remark}
Note that congruence \eqref{eq:param-ell} is always solvable, since $\sigma_2$
is invertible $\pmods{\sigma_3}$. This will be shown by Lemma~\ref{lem:sigma-ell}.
Avoiding the expression $1/\sigma_2$, Chernick used the compatible expression
$\sigma_2^a \pmod{\sigma_3}$ with $a = \varphi(\sigma_3)-1$, where
$\varphi(\cdot)$ is Euler's totient function.
\end{remark}

With the definitions above define the forms with three factors as
\begin{equation} \label{eq:Ur-def}
  U_{\rr}(t) := \prod_{\nu=1}^{3} \, ( r_\nu \, ( \sigma_3 \, t + \ell ) + 1 )
    \quad (\rr \in \RS),
\end{equation}
allowing $\rr$ as an index in place of $n$.

\begin{theorem}[Chernick \cite{Chernick:1939} (1939)] \label{thm:Chernick2}
If \mbox{$\rr \in \RS$}, then $U_{\rr}(t)$ is a universal form.
Moreover, for fixed $t \geq 0$, $U_{\rr}(t)$ is a Carmichael number in $\CN_3$,
if each of its three factors is an odd prime.
\end{theorem}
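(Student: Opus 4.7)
The plan is to compute $U_{\rr}(t) - 1$ explicitly, deduce the universal-form congruence \eqref{eq:Un-prop} from the defining relation \eqref{eq:param-ell}, and then invoke Korselt's criterion for the Carmichael part. Write $M := \sigma_3 t + \ell$ and $L_\nu := r_\nu M + 1$, so that $U_{\rr}(t) = L_1 L_2 L_3$ is a product of three linear polynomials $L_\nu = (r_\nu \sigma_3)\,t + (r_\nu \ell + 1)$ with pairwise distinct leading coefficients; this already matches the shape \eqref{eq:Un-def} with $a_\nu = r_\nu \sigma_3$ and $b_\nu = r_\nu \ell + 1$, and certifies $U_{\rr}(t)$ as a squarefree polynomial of degree three.

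First I would expand
\[
  U_{\rr}(t) - 1 = \prod_{\nu=1}^{3}(r_\nu M + 1) - 1 = M\bigl(\sigma_1 + \sigma_2 M + \sigma_3 M^2\bigr),
\]
using \eqref{eq:sigma-1}--\eqref{eq:sigma-3}. The $\nu$-th universal-form congruence demands that $L_\nu - 1 = r_\nu M$ divide this quantity. Since $r_\nu \mid \sigma_3 M^2$, the task reduces to $r_\nu \mid \sigma_1 + \sigma_2 M$. From $r_\nu \mid \sigma_3$ one gets $M \equiv \ell \pmod{r_\nu}$, so it suffices to show $\sigma_1 + \sigma_2 \ell \equiv 0 \pmod{r_\nu}$; this follows at once by reducing the defining relation \eqref{eq:param-ell}, namely $\sigma_2 \ell \equiv -\sigma_1 \pmod{\sigma_3}$, modulo $r_\nu$. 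The argument breaks down precisely when $L_\nu - 1 = r_\nu M = 0$, which forces $M = 0$ and hence $\ell = 0$ and $t = 0$; this is exactly the $b_\nu = 1$, $t = 0$ exception in \eqref{eq:Un-prop}. The one place where the pairwise coprimality built into $\RS$ enters is to guarantee that $\ell$ exists at all, i.e., that $\sigma_2$ is invertible modulo $\sigma_3$; since $\sigma_2 \equiv r_\mu r_\lambda \pmod{r_\nu}$ with $\{\mu, \lambda, \nu\} = \{1,2,3\}$, this reduces via the Chinese remainder theorem to $\gcd(r_\mu r_\lambda, r_\nu) = 1$, which is exactly Lemma~\ref{lem:sigma-ell} and I would invoke it rather than re-prove it.

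For the second assertion, suppose the three values $L_\nu$ are odd primes. Each $L_\nu > 1$ forces $M > 0$, so $r_1 < r_2 < r_3$ yields $L_1 < L_2 < L_3$; hence $U_{\rr}(t)$ is a product of three distinct odd primes, in particular an odd squarefree integer with exactly three prime factors. By Korselt's criterion the Carmichael property then reduces to $L_\nu - 1 \mid U_{\rr}(t) - 1$ for $\nu = 1, 2, 3$, which is precisely the universal-form congruence just established. Thus $U_{\rr}(t) \in \CN_3$.

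The main obstacle is essentially the bookkeeping around the degenerate case $M = 0$ and the justification that $\ell$ is well-defined; the core algebraic identity for $U_{\rr}(t) - 1$ and the reduction $M \equiv \ell \pmod{r_\nu}$ are routine once the elementary symmetric expansion is in hand.
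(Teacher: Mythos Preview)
The paper does not supply its own proof of Theorem~\ref{thm:Chernick2}; it is quoted as Chernick's result. Your argument is correct, and the central identity you use---the expansion $U_{\rr}(t)-1 = M(\sigma_1+\sigma_2 M+\sigma_3 M^2)$ with $M=\sigma_3 t+\ell$---is precisely what the paper records as~\eqref{eq:Ur-sum} and exploits as~\eqref{eq:Ur-B} in the proof of Theorem~\ref{thm:Ur-prop}, so your approach is fully aligned with the paper's methods. The reduction $r_\nu M \mid U_{\rr}(t)-1 \iff r_\nu \mid \sigma_1+\sigma_2 M$ (via $r_\nu\mid\sigma_3$), followed by $M\equiv\ell\pmod{r_\nu}$ and the defining congruence~\eqref{eq:param-ell}, is clean; your treatment of the degenerate case $M=0$ matches the exception clause in~\eqref{eq:Un-prop}, and the appeal to Korselt's criterion for the Carmichael conclusion is exactly what Theorem~\ref{thm:Chernick} encodes.
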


Chernick gave some examples of $U_{\rr}(t)$, which are listed in
Table~\ref{tbl:Ur-1}. The simplest one is
\begin{equation} \label{eq:Ur-simple}
  U_{\rr}(t) = (6\,t+1)(12\,t+1)(18\,t+1) \quad (\rr = (1,2,3))
\end{equation}
as used in the introduction. The following theorem shows some unique properties
of this $U_{\rr}(t)$, compared to the case $\rr \neq (1,2,3)$.

\begin{theorem} \label{thm:Ur-prop}
Let $\rr \in \RS$ and rewrite \eqref{eq:Ur-def} as
\begin{equation} \label{eq:Ur-prod}
  U_{\rr}(t) = \prod_{\nu=1}^{3} \, (a_\nu \, t + b_\nu).
\end{equation}
Then $U_{\rr}(t)$ has the following properties for $t \in \ZZ$:

\begin{enumerate}
\item If $\rr = (1,2,3)$, then there are the equivalent properties
\[
  U_{\rr}(0) = 1, \quad \ell = 0, \andq b_\nu = 1 \quad (\nu = 1,2,3).
\]
Moreover, one has in this case
\begin{alignat*}{3}
  U_{\rr}(t) &\equiv 1 \pmod{2 \sigma_3^2}, \\
  U_{\rr}(t) &\equiv 1 \pmod{\sigma_3^3} &\quad& (t \not\equiv -1 \; \pmods{3}). \\
\shortintertext{In particular, $U_{\rr}(t)$ is odd and satisfies}
  U_{\rr}(t) &\equiv 1 \pmod{8}.
\end{alignat*}

\item If $\rr \neq (1,2,3)$, then $\ell \neq 0$, $b_\nu \neq 1$ $(\nu = 1,2,3)$, and
\begin{align*}
  U_{\rr}(0) &\equiv 1 \pmod{\sigma_3 \, \ell}, \\
  U_{\rr}(1) &\equiv 1 \pmod{\sigma_3 \, (\sigma_3 + \ell)}, \\
  U_{\rr}(t) &\equiv 1 \pmod{\sigma_3 \gcd(\sigma_3,\ell)}. \\
\shortintertext{In particular, if $\sigma_3$ is even,
then $U_{\rr}(t)$ is odd and satisfies}
  U_{\rr}(t) &\equiv 1 \pmod{4}.
\end{align*}
Otherwise, the parity of $U_{\rr}(t)$ alternates.
More precisely, if $\sigma_3$ is odd, then
\begin{alignat*}{3}
  U_{\rr}(t) &\equiv \delta(t) &&\pmod{2}, \\
  U_{\rr}(t) &\equiv 1 &&\pmod{2^{\delta(t)} \sigma_3 \gcd(\sigma_3,\ell)},
\end{alignat*}
where
\[
  \delta(t) :=
  \begin{cases}
    1, & \text{if $t \equiv \ell$ $\pmods{2}$}, \\
    0, & \text{otherwise}.
  \end{cases}
\]
\end{enumerate}
\end{theorem}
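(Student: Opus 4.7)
The plan is to introduce the auxiliary variable $T := \sigma_3 t + \ell$ so that $U_{\rr}(t) = \prod_{\nu=1}^{3}(r_\nu T + 1) = 1 + \sigma_1 T + \sigma_2 T^2 + \sigma_3 T^3$, which factors as
\[
  U_{\rr}(t) - 1 = T \, \bigl( \sigma_1 + \sigma_2 T + \sigma_3 T^2 \bigr).
\]
The defining congruence~\eqref{eq:param-ell} combined with $T \equiv \ell \pmods{\sigma_3}$ immediately forces $\sigma_1 + \sigma_2 T + \sigma_3 T^2 \equiv \sigma_1 + \sigma_2 \ell \equiv 0 \pmods{\sigma_3}$, so that $U_{\rr}(t) - 1$ is always divisible by $T\sigma_3$. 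All claimed congruences then reduce to extracting further factors from $T$ and from the quotient.

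First I would settle the equivalence of the three conditions in (i): since $b_\nu = r_\nu \ell + 1$ and $U_{\rr}(0) = \prod_\nu b_\nu$, each of $\ell = 0$, $b_\nu = 1$ for $\nu = 1,2,3$, and $U_{\rr}(0) = 1$ are mutually equivalent. For $\rr = (1,2,3)$ one has $\sigma_1 = \sigma_3 = 6$, so $\ell = 0$; while for $\rr \neq (1,2,3)$ I would show $0 < \sigma_1 < \sigma_3$ by splitting into the subcases $r_1 \geq 2$, $r_1 = 1$ with $r_2 \geq 3$, and $r_1 = 1, r_2 = 2$ with $r_3 \geq 5$ (the pairwise coprimality excludes $r_3 = 4$), which gives $\ell \neq 0$. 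Substituting $\sigma_3 = 6$, $\ell = 0$, $T = 6t$, and $\sigma_2 = 11$ into the factorisation yields $U_{\rr}(t) - 1 = 36 \, t \, (1 + 11t + 36t^2)$, from which one reads off divisibility by $72 = 2\sigma_3^2$ (using that $t(1+11t)$ is always even) and, by sweeping the six residues of $t$ modulo $6$, divisibility by $216 = \sigma_3^3$ precisely when $t \not\equiv -1 \pmods{3}$; the mod $8$ claim then follows trivially from mod $72$.

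For case (ii), the congruences at $t = 0$ and $t = 1$ fall out by substituting $T = \ell$ and $T = \sigma_3 + \ell$ into the factorisation together with the divisibility of the second factor by $\sigma_3$; the general statement follows because $T \equiv \ell \pmods{\sigma_3}$ implies $\gcd(\sigma_3, \ell) \mid T$. The main obstacle will be the parity analysis. When $\sigma_3$ is even, \eqref{eq:sigma-even} makes $\ell$ even too, so both $T$ and $\sigma_3$ contribute a factor of $2$, giving $U_{\rr}(t) \equiv 1 \pmods{4}$. When $\sigma_3$ is odd, \eqref{eq:sigma-odd} makes $\sigma_1, \sigma_2, \sigma_3$ all odd, and the key observation is that $\sigma_1 + \sigma_2 T + \sigma_3 T^2 \equiv 1 + T + T^2 \equiv 1 \pmods{2}$ regardless of the parity of $T$. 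Hence $U_{\rr}(t) - 1 \equiv T \pmods{2}$, so the parity of $U_{\rr}(t)$ is governed by $T = \sigma_3 t + \ell \equiv t + \ell \pmods{2}$, which yields the $\delta(t)$ formula. When $\delta(t) = 1$, $T$ is even and $\gcd(\sigma_3, \ell)$ is odd, so the divisors $2$ and $\gcd(\sigma_3, \ell)$ of $T$ combine to produce the extra factor of $2$ in the modulus $2^{\delta(t)} \sigma_3 \gcd(\sigma_3, \ell)$.
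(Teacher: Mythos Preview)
Your proposal is correct and follows essentially the same route as the paper: the paper introduces the same substitution (called $\lambda$ rather than $T$), derives the identical factorisation $U_{\rr}(t)-1 = \lambda(\sigma_1+\sigma_2\lambda+\sigma_3\lambda^2)$, and extracts the congruences and parity statements exactly as you outline. The only cosmetic differences are that the paper packages the inequality $\sigma_1<\sigma_3$ for $\rr\neq(1,2,3)$ into a separate lemma, and for the $\sigma_3^3$ divisibility it reduces $t(1+11t+36t^2)$ modulo $2$ and $3$ separately rather than sweeping residues modulo~$6$.
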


The next theorem shows the following remarkable property of $U_{\rr}(t)$.
Given any $\rr \in \RS$ we have that $U_{\rr}(t) \in \SD$ for $t \geq 2$.
Besides certain exceptions this property also holds in the case $t=1$.
More precisely, for those $t \geq 1$ in question the three factors of $U_{\rr}(t)$,
as given by~\eqref{eq:Ur-def}, already form a strict \mbox{$s$-decomposition}.
If the three factors are odd primes, then $U_{\rr}(t) \in \CN_3$ by
Theorem~\ref{thm:Chernick2}. Moreover, using the property $U_{\rr}(t) \in \SD$,
it then follows that $U_{\rr}(t) \in \CP_3$.
Thereby we arrive at our main results.

\begin{theorem} \label{thm:main}
Let $\rr \in \RS$ and define
\[
  \tau := \begin{cases}
    2, & \text{if $r_1 = 1$ and $\ell < \sigma_3 - \sigma_1$}, \\
    1, & \text{otherwise}.
  \end{cases}
\]
If $t \geq \tau$, then
\[
  U_\rr(t) = g_1 \cdot g_2 \cdot g_3 \in \SD,
\]
where the three factors are given by
\[
  g_\nu = r_\nu \, ( \sigma_3 \, t + \ell ) + 1 \quad (\nu=1,2,3)
\]
and yield a strict $s$-decomposition. Moreover, if each factor $g_\nu$ is an
odd prime, then $U_\rr(t)$ represents a primary Carmichael number, namely,
\[
  U_\rr(t) \in \CP_3.
\]
\end{theorem}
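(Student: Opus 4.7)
The plan is to verify that the three factors $g_1<g_2<g_3$ of $U_\rr(t)$ constitute a strict $s$-decomposition of $m:=U_\rr(t)$, i.e.\ that $s_{g_\nu}(m)=g_\nu$ for each $\nu\in\{1,2,3\}$. Since $g_\nu\mid m$, the base-$g_\nu$ expansion of $m$ ends in a single zero, so the task reduces to the digit sum of the cofactor $M_\nu:=m/g_\nu=\prod_{\mu\neq\nu}g_\mu=(\sigma_3/r_\nu)N^2+(\sigma_1-r_\nu)N+1$, where $N=\sigma_3 t+\ell$. Using $g_\nu-1=r_\nu N$ one derives the key identity
\[
  M_\nu = k_\nu\,g_\nu + (1-k_\nu),
  \withq k_\nu = \frac{(\sigma_3/r_\nu)N + \sigma_1 - r_\nu}{r_\nu}.
\]
That $k_\nu$ is a positive integer follows from \eqref{eq:param-ell}: reducing $\sigma_2\ell+\sigma_1\equiv 0\pmod{\sigma_3}$ modulo $r_\nu$, and using $\sigma_3/r_\nu\equiv\sigma_2$ and $\sigma_1-r_\nu\equiv\sigma_1\pmod{r_\nu}$, shows the numerator is divisible by $r_\nu$.

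Next I would write $k_\nu=q_\nu g_\nu+\tilde r_\nu$ with $0\le\tilde r_\nu<g_\nu$, which turns the identity into the provisional form $M_\nu=q_\nu g_\nu^2+(\tilde r_\nu-q_\nu)g_\nu+(1-\tilde r_\nu)$. A short case analysis on the signs of $\tilde r_\nu-q_\nu$ and $1-\tilde r_\nu$, together with the borrows needed to bring every digit into $[0,g_\nu)$, yields the following dichotomy (in the generic range $k_\nu\ge 2$): if $\tilde r_\nu>q_\nu$ (or the edge case $\tilde r_\nu=0$), then $s_{g_\nu}(M_\nu)=g_\nu$; whereas if $1\le\tilde r_\nu\le q_\nu$, then a cascaded borrow produces the exceptional sum $s_{g_\nu}(M_\nu)=2g_\nu-1$. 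The second alternative is precisely the mechanism behind the exceptional Carmichael numbers mentioned in the introduction, so the proof reduces to ruling it out.

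Because $k_\nu$ is linear in $N$ with leading coefficient $\sigma_3/r_\nu^2=r_ar_b/r_\nu^2$ (where $\{r_a,r_b\}=\{r_1,r_2,r_3\}\setminus\{r_\nu\}$), the quotient $q_\nu$ grows fastest for $\nu=1$ and slowest for $\nu=3$; the cases $\nu=2,3$ can be dispatched by uniform inequalities valid for all $t\ge 1$. The main obstacle is $\nu=1$. The cleanest subcase is $r_1=1$, where $k_1=r_2r_3\,N+r_2+r_3$ and $g_1=N+1$; an explicit division yields $q_1=r_2r_3-1$ and $\tilde r_1=N+1+r_2+r_3-r_2r_3$, and the condition $\tilde r_1>q_1$ reduces to
\[
  N\ge 2\sigma_3-\sigma_1,
\]
which, via $N=\sigma_3 t+\ell$, amounts to $t\ge 2$ or to $(t=1\text{ and }\ell\ge\sigma_3-\sigma_1)$---exactly the definition of $\tau$ when $r_1=1$. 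For $r_1\ge 2$ the extra factor $r_1$ in the denominator of $k_1$ loosens the analogous bound, so the inequality already holds for all $t\ge 1$ and $\tau=1$ suffices. Once $m=g_1\cdot g_2\cdot g_3\in\SD$ is established, the primality hypothesis combined with Theorem~\ref{thm:Chernick2} places $m$ in $\CN_3$, and Theorem~\ref{thm:SD-prop}(iii) (equivalently Theorem~\ref{thm:s-decomp}(iii)) upgrades this to $m\in\CP_3$.
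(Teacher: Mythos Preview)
Your approach is essentially the paper's: both compute the base-$g_\nu$ expansion of the cofactor $M_\nu=m/g_\nu$ and verify that the digits sum to $g_\nu$. Your parameters match the paper's exactly under the dictionary $k_\nu=\alpha g_\nu-\beta$, $q_\nu=\lfloor\alpha\rfloor$, $\tilde r_\nu=\theta$, where $\alpha=\sigma_3/r_\nu^3$ and $\beta=\alpha-\sigma_1/r_\nu+1$ (the paper's Lemmas~\ref{lem:sigma-alpha} and~\ref{lem:sigma-theta}); your integrality argument for $k_\nu$ is the paper's Lemma~\ref{lem:sigma-eta}. The case $r_1=1$ you work out explicitly agrees verbatim with the paper's ``Case $r_j=1$'', and the threshold $N\ge 2\sigma_3-\sigma_1$ is precisely what produces the definition of~$\tau$.

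Two points deserve attention. First, your dichotomy is slightly misstated at $\tilde r_\nu=1$: there the units digit $1-\tilde r_\nu=0$ needs no borrow, so for $q_\nu\ge 2$ one gets digits $(0,\,g_\nu+1-q_\nu,\,q_\nu-1)$ with sum $g_\nu$, not $2g_\nu-1$. This does not affect your argument (you aim to prove $\tilde r_\nu>q_\nu$ anyway), but the clean statement is that the exceptional sum $2g_\nu-1$ occurs only when $2\le\tilde r_\nu\le q_\nu$.

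Second, and more substantively, the cases $r_\nu\ge 2$ are where the real work lies, and you have only asserted that ``uniform inequalities'' dispatch them. The paper devotes Lemma~\ref{lem:sigma-theta}(ii) to this: one must show $g_\nu>\theta>1+\lfloor\alpha\rfloor$ (in your language, $0\le q_\nu<\tilde r_\nu<g_\nu$) for all $t\ge 1$, and this requires separate two-sided estimates exploiting that $\alpha,\beta\in\ZZ/r_\nu^2\setminus\ZZ$ so that $\{\alpha\}\ge 1/r_\nu^2$ and $1-\{\alpha\}\ge 1/r_\nu^2$, followed by bounding expressions like $r_\nu^{-1}(r_i^{-1}+r_k^{-1}-r_\nu^{-1})$ below~$1$. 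Your intuition that ``the extra factor $r_1$ in the denominator loosens the bound'' is correct, but turning it into a proof is not automatic; this is exactly the content you would still need to supply.
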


The complementary cases omitted by Theorem~\ref{thm:main}
are handled by the following theorem.

\begin{theorem} \label{thm:main2}
Let $\rr \in \RS$ and the symbols defined as in Theorem~\ref{thm:main}.
Define the integer parameter
\[
  \vartheta := \frac{\sigma_1}{r_3} + \frac{\ell \sigma_3}{r_3^2} \geq 2.
\]
For the complementary cases
\[
  U_\rr(t) = g_1 \cdot g_2 \cdot g_3 \quad (0 \leq t < \tau)
\]
the following statements hold.
\begin{enumerate}
\item If each factor $g_\nu$ is an odd prime, then $U_\rr(t) \in \CN_3$.
      Additionally,
\begin{alignat*}{3}
  U_\rr(t) &\in \CP_3, &\quad& \text{if $t=0$ and $U_\rr(t) \in \SD$}, \\
  U_\rr(t) &\notin \CP_3, && \text{if $t=1$}.
\end{alignat*}
\end{enumerate}

In particular, for $m = U_\rr(t)$ there are the following properties.
\begin{enumerate}[resume]
\item If $t=0$, then
\begin{align*}
  \vartheta = 2 &\impliest s_{g_3}(m) < g_3, \; m = g_3^2, \; g_3 = g_1 \, g_2, \\
  \vartheta > 2 &\impliest s_{g_3}(m) = g_3, \; m > g_3^2.
\end{align*}

\item If $t=1$, then $m \in \SDG$ and its $s$-decomposition
      $g_1 \cdot g_2 \cdot g_3 \in \SDG \setminus \SD$ with
      $s_{g_1}(m) = 2g_1 - 1, \, s_{g_2}(m) = g_2, \, s_{g_3}(m) = g_3$.
\end{enumerate}
\end{theorem}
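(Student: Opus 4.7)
The plan is to handle parts~(ii) and~(iii) by explicitly determining the base-$g_\nu$ expansions of $m = g_1 g_2 g_3$, after which part~(i) follows immediately from Theorem~\ref{thm:Chernick2}, the definition of~$\CP$, and Theorem~\ref{thm:SD-prop}(iii).

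For part~(ii) (the case $t = 0$), I first verify $\vartheta \in \ZZ_{\geq 2}$ by reducing $\sigma_2 \ell + \sigma_1 \equiv 0 \pmod{\sigma_3}$ modulo $r_3$. Multiplying $\vartheta - 1 = (r_1 r_2 \ell + r_1 + r_2)/r_3$ through by $r_3 \ell = g_3 - 1$ yields the key identity
\[
(\vartheta - 1)(g_3 - 1) = g_1 g_2 - 1,
\]
so $g_1 g_2 = (\vartheta - 1) g_3 - (\vartheta - 2)$; the inequality $g_1 g_2 < g_3^2$ gives $\vartheta \leq g_3 + 1$. If $\vartheta = 2$ then $g_1 g_2 = g_3$, so $m = g_3^2$ and $s_{g_3}(m) = 1$; if $\vartheta > 2$ then $m = (\vartheta - 2) g_3^2 + (g_3 - \vartheta + 2) g_3$ is a valid base-$g_3$ expansion whose digits sum to $g_3$.

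For part~(iii) (the case $t = 1$, with $\tau = 2$, $r_1 = 1$, and $\ell < \sigma_3 - \sigma_1$), I set $\mu := \sigma_3 + \ell$, so $g_\nu = r_\nu \mu + 1$ and $g_1 = \mu + 1$. The equality $s_{g_3}(m) = g_3$ follows from the argument of part~(ii) with $\mu$ replacing $\ell$: the analogous parameter is $\vartheta' = \vartheta + r_2^2$, which still satisfies $2 < \vartheta' \leq g_3 + 1$. For $s_{g_1}(m) = 2 g_1 - 1$, I expand $m/g_1 = g_2 g_3 = r_2 r_3 g_1^2 - a g_1 + c$ with $a := 2 r_2 r_3 - r_2 - r_3$ and $c := (r_2 - 1)(r_3 - 1) < g_1$; the assumption $\ell < \sigma_3 - \sigma_1$ is equivalent to $a > g_1$, while $a < 2 g_1$ is automatic, so $\lceil a/g_1 \rceil = 2$ and a single borrow produces the base-$g_1$ expansion $(r_2 r_3 - 2,\, 2 g_1 - a,\, c,\, 0)_{g_1}$, whose digits telescope to $2 g_1 - 1$. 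For $s_{g_2}(m) = g_2$, I introduce $\vartheta_2 := 1 + (r_3 g_1 + 1)/r_2$, which is integer by reducing the congruence on $\ell$ modulo $r_2$, and write $\vartheta_2 - 1 = \lambda g_2 + \rho$ in base $g_2$; after appropriate borrowing in $m = (\vartheta_2 - 1) g_2^2 - (\vartheta_2 - 2) g_2$, the resulting digit sum equals $g_2$ precisely when $\rho \geq \lambda + 1$ (the only alternative being $2 g_2 - 1$). I secure $\rho \geq \lambda + 1$ by the further decomposition $\vartheta_2 - 1 = k g_1 + e$ with $k := \lfloor r_3/r_2 \rfloor$ and $e := ((r_3 \bmod r_2)\, g_1 + 1)/r_2$: the lower bound $\mu \geq \sigma_3 = r_2 r_3$ specific to $t = 1$ yields $e \geq r_3 + 1 > k \geq \lambda$, and hence $\rho \geq e > \lambda$.

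The main obstacle will be the verification of $\rho \geq \lambda + 1$ in the $s_{g_2}$ computation: the base-$g_2$ expansion of $g_1 g_3$ has two significant digits when $r_3 < r_2^2$ and three when $r_3 \geq r_2^2$, and in the latter subcase tracking the borrows cleanly requires some care. Once the decomposition $\vartheta_2 - 1 = k g_1 + e$ is in hand, the required inequality $e > k$ is elementary from $\mu \geq r_2 r_3$.
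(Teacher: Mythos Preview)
Your treatment of parts~(ii), the $s_{g_1}$ computation, and the $s_{g_3}$ computation in part~(iii) is correct and more direct than the paper's: the paper does not re-derive any expansions in the proof of Theorem~\ref{thm:main2} but simply cites equations established inside the proof of Theorem~\ref{thm:main}, where the case $r_j>1$, $t\ge 1$ is handled uniformly for all three factors via the inequality $g>\theta>1+\intpart{\alpha}$ of Lemma~\ref{lem:sigma-theta}(ii). Your factor-by-factor route avoids that lemma for $g_1$ and $g_3$, which is a genuine simplification.

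The $s_{g_2}$ argument, however, has a real gap. From $\vartheta_2-1=kg_1+e=\lambda g_2+\rho$ you correctly obtain $e\ge r_3+1>k\ge\lambda$, but then write ``hence $\rho\ge e>\lambda$''. The step $\rho\ge e$ is equivalent to $kg_1\ge\lambda g_2$, and this does \emph{not} follow from $\lambda\le k$ alone (the inequality $g_1<g_2$ pushes the other way; in general, remainders modulo a larger divisor need not be larger). The claim is in fact true here, but proving it requires the sharper identification $\lambda=\intpart{r_3/r_2^2}$, not merely $\lambda\le k=\intpart{r_3/r_2}$: one checks that $(\vartheta_2-1)/g_2=\alpha+\delta$ with $\alpha=r_3/r_2^2$ and $\delta=(r_3(r_2-1)+r_2)/(r_2^2 g_2)$, and the bound $g_2>r_2^2 r_3$ (from $\mu\ge\sigma_3$) forces $\delta<1/r_2^2\le 1-\fracpart{\alpha}$, so no carry occurs and $\lambda=\intpart{\alpha}$. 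With $r_3=kr_2+s$ and $k=qr_2+s''$ one then gets $\lambda=q$ and $kg_1-\lambda g_2=s''\mu+(k-q)\ge 0$, closing the gap. This is precisely the content of the paper's Lemma~\ref{lem:sigma-theta}(ii) in disguise: there $\intpart{\alpha}$ is your $\lambda$ and $\theta$ is your $\rho$, and the lemma proves $\rho>\lambda+1$ directly.
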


\begin{remark}
To ensure the property $U_\rr(t) \in \SD$, the parameter $\tau \in \set{1,2}$
in Theorem~\ref{thm:main} cannot be improved in general. Table~\ref{tbl:Ur-val}
shows examples (taken from Tables~\ref{tbl:Ur-1} and~\ref{tbl:Ur-2}) that
satisfy the conditions of Theorem~\ref{thm:main2}. Note that for $\rr=(1,2,7)$
the decomposition $3 \cdot 5 \cdot 15 \notin \SDG$, while the value satisfies
$U_\rr(0) = 225 = 5^2 \cdot 9 \in \SD$. The case $t=0$ and $\vartheta=2$,
implying that $U_\rr(0)$ is a square, is established by a relationship between
$U_\rr(t)$ and the polygonal numbers, see Section~\ref{sec:poly}.
\end{remark}

\begin{table}[H]
\begin{center} \small
\begin{tabular}{ccrl@{$\;=\;$}rr@{$\;$}l}
  \toprule
  \multicolumn{1}{c}{$\rr$} & \multicolumn{1}{c}{$(\tau,t)$} & \multicolumn{1}{c}{$\vartheta$} &
  \multicolumn{2}{c}{value} & \multicolumn{2}{c}{decomposition}\\
  \midrule
  $(1,2,3)$  & $(1,0)$ & $2$ & $U_\rr(0)$ & $1$ & $1 \cdot 1 \cdot 1$    & $\notin \SDG$ \\
  $(1,2,7)$  & $(2,0)$ & $2$ & $U_\rr(0)$ & $225$ & $3 \cdot 5 \cdot 15$ & $\notin \SDG$ \\
  \midrule
  $(2,7,13)$ & $(1,0)$ & $6$ & $U_\rr(0)$ & $13\,833$ & $9 \cdot 29 \cdot 53$   & $\in \SDG \setminus \SD$ \\
  $(1,2,7)$  & $(2,1)$ & $2$ & $U_\rr(1)$ & $63\,393$ & $17 \cdot 33 \cdot 113$ & $\in \SDG \setminus \SD$ \\
  \midrule
  $(1,3,5)$ & $(1,0)$ & $9$  & $U_\rr(0)$ & $29\,341$  & $13 \cdot 37 \cdot 61$  & $\in \SD \cap \CP_3$ \\
  $(2,3,5)$ & $(1,0)$ & $26$ & $U_\rr(0)$ & $252\,601$ & $41 \cdot 61 \cdot 101$ & $\in \SD \cap \CP_3$ \\
  \bottomrule
\end{tabular}

\caption{Examples of $U_{\rr}(0)$ and $U_{\rr}(1)$.}
\label{tbl:Ur-val}
\end{center}
\end{table}

Table~\ref{tbl:Ur-1} reproduces the examples of $U_{\rr}(t)$ given by Chernick,
while we give further examples in Table~\ref{tbl:Ur-2}. Both tables are extended
by a third column with parameters $(\sigma_1,\sigma_2,\sigma_3,\ell,\tau)$.

\begin{table}[H]
\begin{center} \small
\begin{tabular}{ccc}
  \toprule
  \multicolumn{1}{c}{$\rr$} & \multicolumn{1}{c}{$U_{\rr}(t)$} &
  \multicolumn{1}{c}{$(\sigma_1,\sigma_2,\sigma_3,\ell,\tau)$}\\
  \midrule
  $(1,2,3)$ & $(6\,t+1)(12\,t+1)(18\,t+1)$ & $(6, 11, 6, 0, 1)$ \\
  $(1,2,5)$ & $(10\,t+7)(20\,t+13)(50\,t+31)$ & $(8, 17, 10, 6, 1)$ \\
  $(1,3,8)$ & $(24\,t+13)(72\,t+37)(192\,t+97)$ & $(12, 35, 24, 12, 1)$ \\
  $(2,3,5)$ & $(60\,t+41)(90\,t+61)(150\,t+101)$ & $(10, 31, 30, 20, 1)$ \\
  \bottomrule
\end{tabular}

\caption{Chernick's examples of $U_{\rr}(t)$.}
\label{tbl:Ur-1}
\end{center}
\end{table}

\begin{table}[H]
\begin{center} \small
\begin{tabular}{ccc}
  \toprule
  \multicolumn{1}{c}{$\rr$} & \multicolumn{1}{c}{$U_{\rr}(t)$} &
  \multicolumn{1}{c}{$(\sigma_1,\sigma_2,\sigma_3,\ell,\tau)$}\\
  \midrule
  $(1,2,7)$ & $(14\,t+3)(28\,t+5)(98\,t+15)$ & $(10, 23, 14, 2, 2)$ \\
  $(1,3,4)$ & $(12\,t+5)(36\,t+13)(48\,t+17)$ & $(8, 19, 12, 4, 1)$ \\
  $(1,3,5)$ & $(15\,t+13)(45\,t+37)(75\,t+61)$ & $(9, 23, 15, 12, 1)$ \\
  $(2,7,13)$ & $(364\,t+9)(1274\,t+29)(2366\,t+53)$ & $(22, 131, 182, 4, 1)$ \\
  \bottomrule
\end{tabular}

\caption{Further examples of $U_{\rr}(t)$.}
\label{tbl:Ur-2}
\end{center}
\end{table}

The example of a special $U_{\rr}(1) \in \CP_3$, which was used in the introduction
as $\tilde{U}_3(1)$, is shown in Table~\ref{tbl:Ur-large}. To find such an example,
the parameter \mbox{$\rr=(p_1, p_2, p_3)$} was constructed by primes that were
chosen from a finite set of primes.

\begin{table}[H]
\begin{center} \small
\begin{tabular}{c|c}
  \toprule
  $\rr$ & $(101, 199, 499)$ \\
   & \hspace*{0.15em} $(1\,012\,969\,501 \, t + 557\,673\,420)$ \\
  $U_{\rr}(t)$ & $\times \,\, (1\,995\,850\,799 \, t + 1\,098\,782\,282)$ \\
   & $\times \,\, (5\,004\,671\,099 \, t + 2\,755\,237\,982)$ \\
  $(\sigma_1,\sigma_2,\sigma_3,\ell,\tau)$ & $(799, 169\,799, 10\,029\,401, 5\,521\,519, 1)$ \\
  \bottomrule
\end{tabular}

\caption{Example of $U_{\rr}(1) \in \CP_3$.}
\label{tbl:Ur-large}
\end{center}
\end{table}

At the end of this section, we consider the case when $U_n(t)$ has $n \geq 4$
factors. Unfortunately, several computations suggest that the strong property
$U_n(t) \in \SD$, which is a necessary (but not sufficient) condition for
$U_n(t)$ to be in $\CP_n$, breaks down for $n \geq 4$.

However, it seems that a weaker property, if we replace $\SD$ by
\mbox{$\SL \setminus \SD$}, still holds in the case $n = 4$. This situation may
be confirmed by adapting the proof of Theorem~\ref{thm:main} from case $n=3$ to
$n=4$, roughly speaking.

For a provisional verification one can use Chernick's examples of $U_4(t)$
in~\cite{Chernick:1939}. On the basis of extended computations and considering
the set $\CS_4$ of exceptional Carmichael numbers, we raise the following
conjecture for the more complicated case $n=4$.

\begin{conjecture}
If $U_4(t)$ is a universal form, then $U_4(t)$ satisfies the following properties
for all sufficiently large $t$:
\begin{enumerate}
\item $U_4(t) \in \SL \setminus \SD$.
\item $U_4(t) \notin \CP_4$.
\end{enumerate}
\end{conjecture}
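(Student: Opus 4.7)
The plan is to extend the techniques used in the proof of Theorem~\ref{thm:main} from three to four factors. Write $U_4(t) = \prod_{\nu=1}^{4} g_\nu$ with $g_\nu = a_\nu t + b_\nu$. Since $U_4$ is a universal form, \eqref{eq:Un-prop} gives $U_4(t) \equiv 1 \pmod{g_\nu - 1}$, hence
\[
  s_{g_\nu}(U_4(t)) \equiv 1 \pmod{g_\nu - 1}
\]
for each $\nu$, in parallel with Theorem~\ref{thm:CN-prop}.

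First I would attack (ii). For each $\nu$, compute the base-$g_\nu$ expansion of $U_4(t)$ by iteratively dividing the cubic polynomial $\prod_{\mu \neq \nu} g_\mu$ by the linear polynomial $g_\nu$, viewed as polynomials in $t$. For sufficiently large $t$ the expansion has a bounded number of digits whose explicit forms are polynomial in $t$ and in the coefficients $a_\mu, b_\mu$. I expect the digit sum to simplify to $2 g_\nu - 1$, which is the next value admissible by the congruence above. This matches the empirical pattern $s_p(m) = 2p - 1$ observed for the four known elements of $\CS_4$ listed in Section~\ref{sec:except}. Since $2g_\nu - 1 \neq g_\nu$, conclusion (ii) follows whenever the four factors are odd primes; in the remaining cases $U_4(t) \notin \CN_4$ and (ii) holds trivially.

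For (i), note that for generic large $t$ the number $U_4(t)$ is squarefree with four prime factors, so every divisor is a subset-product of $\{g_1, g_2, g_3, g_4\}$, and every $s$-decomposition corresponds to an ordered set partition of $\{1,2,3,4\}$ into at least two blocks. The $14$ resulting partitions can be analysed one by one: the partition into four singletons is already ruled out by (ii) together with Theorem~\ref{thm:s-decomp}(iv), and the remaining $13$ coarser partitions should be eliminated by direct base-expansion computations mimicking those used in~(ii). For the positive claim $U_4(t) \in \SL$, one must exhibit a single divisor $g \mid U_4(t)$ with $s_g(U_4(t)) = g$. A natural candidate is a product $g = g_\mu g_\nu$ of two factors, with the correct pair $(\mu, \nu)$ dictated by the coefficient data of $U_4$.

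The main obstacle is part (i), especially the existence claim $U_4(t) \in \SL$. Unlike in Theorem~\ref{thm:main}, where the prime factors themselves yield the strict decomposition, here (ii) precludes using any single $g_\nu$, so a composite divisor must be engineered uniformly in the parameters. A plausible prerequisite is a structural classification of Chernick's four-factor forms analogous to Theorem~\ref{thm:Ur-prop}, identifying a canonical symmetric-function parameter that pins down the correct candidate $g = g_\mu g_\nu$. Ruling out the thirteen coarser set partitions for the $\SD$-part of (i) is more routine but still requires careful bookkeeping of digit sums across different bases that grow polynomially in $t$.
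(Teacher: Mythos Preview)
The statement you are addressing is a \emph{conjecture} in the paper, not a theorem; the paper offers no proof, only the informal remark that the situation ``may be confirmed by adapting the proof of Theorem~\ref{thm:main} from case $n=3$ to $n=4$, roughly speaking,'' together with computational evidence from Chernick's examples and the set~$\CS_4$. There is therefore no proof in the paper to compare against, and your proposal is best read as a programme toward an open problem rather than as a proof.

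Your outline for (ii) is in the spirit of the paper's suggestion, but the assertion that the base-$g_\nu$ digit sum equals $2g_\nu-1$ remains an expectation, not a computation. In the three-factor case the product $\prod_{\mu\neq\nu}g_\mu$ is quadratic in $g_\nu$, and the digit bookkeeping in the proof of Theorem~\ref{thm:main} depends on the explicit parametrisation by $\rr\in\RS$ and on Lemmas~\ref{lem:sigma-alpha} and~\ref{lem:sigma-theta}. For four factors the paper provides no analogous parametrisation of all universal forms $U_4$, and indeed flags this as the difficulty; a uniform statement valid for \emph{every} $U_4$ cannot be obtained just by Chernick's examples.

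There is a genuine gap in your approach to (i). You write that ``for generic large $t$ the number $U_4(t)$ is squarefree with four prime factors,'' and then reduce the search for strict $s$-decompositions to the $14$ set partitions of $\{1,2,3,4\}$. This premise is false: for the vast majority of large $t$ the linear factors $g_\nu=a_\nu t+b_\nu$ are composite, so $U_4(t)$ has many more than four prime divisors and correspondingly far more candidate $s$-decompositions than the subset-products of $\{g_1,\dots,g_4\}$. Since the conjecture demands $U_4(t)\notin\SD$ for \emph{all} sufficiently large $t$, not merely for those rare $t$ at which the $g_\nu$ are simultaneously prime, ruling out strict $s$-decompositions built from arbitrary divisors of $U_4(t)$ is a qualitatively harder problem than the finite case analysis you describe. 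The positive half $U_4(t)\in\SL$ is less affected---your candidate $g=g_\mu g_\nu$ need only work, not be unique---but establishing it uniformly across all four-factor universal forms again runs into the absence of a structural classification, which is exactly why the paper leaves the statement as a conjecture.
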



\section{Complementary Cases}
\label{sec:compl}

Chernick showed that any number $m \in \CN_3$ obeys a special formula, which is
intimately connected with $U_{\rr}(t)$. Actually, he defined his universal forms
thereafter. Recall the definitions of $\sigma_\nu$ and $\ell$ in
\eqref{eq:sigma-1} -- \eqref{eq:param-ell}. The result can be stated as follows.

\begin{theorem}[Chernick \cite{Chernick:1939} (1939)] \label{thm:Chernick3}
If $m \in \CN_3$, then there exists a unique $\rr \in \RS$ such that
\[
  m = (r_1 \, u + 1)(r_2 \, u + 1)(r_3 \, u + 1),
\]
where $u$ is an even positive integer. More precisely,
if $m = p_1 \cdot p_2 \cdot p_3$ with odd primes $p_1 < p_2 < p_3$, then
\begin{align*}
  u &= \gcd( p_1 - 1, p_2 - 1, p_3 - 1 ) \\
\shortintertext{and}
  \rr &= \mleft( \frac{p_1 - 1}{u}, \frac{p_2 - 1}{u}, \frac{p_3 - 1}{u} \mright).
\end{align*}
Moreover,
\[
  m = U_{\rr}(t),
\]
where $t \geq 0$ is an integer satisfying $u = \sigma_3 \, t + \ell$.
\end{theorem}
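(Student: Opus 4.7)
My plan is to start from the factorization $m = p_1 p_2 p_3$ with odd primes $p_1 < p_2 < p_3$ and to verify directly that the tuple $\rr = ((p_1-1)/u,\,(p_2-1)/u,\,(p_3-1)/u)$ with $u = \gcd(p_1-1,p_2-1,p_3-1)$ lies in $\RS$, and that $m = U_\rr(t)$ for a suitable integer $t \geq 0$. Several properties are immediate: $u$ is even as the $\gcd$ of three even numbers, $r_1 < r_2 < r_3$ is inherited from $p_1 < p_2 < p_3$, and $\gcd(r_1,r_2,r_3) = 1$ holds by construction of $u$.

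The main step is to upgrade this joint coprimality to pairwise coprimality. For this I would use the Carmichael divisibility $p - 1 \mid m/p - 1$, which applied to each $p_\nu$ yields $r_\nu u \mid p_j p_k - 1 = u\,(r_j r_k u + r_j + r_k)$, hence
\[
  r_\nu \mid r_j r_k u + r_j + r_k \quad\text{for each permutation $\{\nu,j,k\} = \{1,2,3\}$.}
\]
If a prime $q$ divided both $r_1$ and $r_2$, the relation for $\nu = 1$ combined with $q \mid r_2$ would force $q \mid r_3$, contradicting $\gcd(r_1,r_2,r_3) = 1$; pairwise coprimality follows, so $\rr \in \RS$.

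Next I would extract the congruence satisfied by $u$. Reading the three divisibilities modulo $r_1$, $r_2$, $r_3$ in turn shows that $\sigma_2 u + \sigma_1$ is divisible by each $r_\nu$, and pairwise coprimality upgrades this to $\sigma_3 \mid \sigma_2 u + \sigma_1$. Since $\gcd(\sigma_2,\sigma_3) = 1$ (a prime dividing both would have to lie in some $r_i$ and hence, via $\sigma_2 \equiv r_j r_k \pmod{r_i}$, in a second $r_j$), inversion gives $u \equiv -\sigma_1/\sigma_2 \equiv \ell \pmod{\sigma_3}$. Positivity of $u$ together with $0 \leq \ell < \sigma_3$ then forces $u = \sigma_3 t + \ell$ with $t \geq 0$: if $\ell > 0$ then $u \geq \ell$ automatically, and if $\ell = 0$ then $u$ is a positive multiple of $\sigma_3$, giving $t \geq 1$.

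Plugging $u = \sigma_3 t + \ell$ back into $m = \prod_{\nu=1}^{3}(r_\nu u + 1)$ recovers $m = U_\rr(t)$ via the definition~\eqref{eq:Ur-def}, and uniqueness of $\rr$ is automatic because the formulas for $u$ and $r_\nu$ are determined by the prime factorization of $m$. The principal obstacle is the coprimality upgrade: once the three divisibilities $r_\nu \mid r_j r_k u + r_j + r_k$ are in hand, both the pairwise coprimality of the $r_\nu$ and the congruence $u \equiv \ell \pmod{\sigma_3}$ fall out by elementary manipulation.
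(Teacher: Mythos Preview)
The paper does not actually prove Theorem~\ref{thm:Chernick3}; it is stated as Chernick's result and cited from~\cite{Chernick:1939} without proof, so there is no argument in the paper to compare against. Your proposal is a correct and self-contained proof: the key divisibilities $r_\nu \mid r_j r_k u + r_j + r_k$ derived from $p_\nu - 1 \mid m/p_\nu - 1$ indeed yield both pairwise coprimality and the congruence $u \equiv \ell \pmod{\sigma_3}$, and the invertibility of $\sigma_2$ modulo $\sigma_3$ is exactly Lemma~\ref{lem:sigma-ell} in the paper.

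One small point on uniqueness: saying the formulas are ``determined by the prime factorization'' establishes existence, not uniqueness. To close the argument you should note that in any representation $m = (r'_1 u' + 1)(r'_2 u' + 1)(r'_3 u' + 1)$ with $\rr' \in \RS$ and $u'$ even positive, each factor is at least $3$ and they are strictly increasing with product $p_1 p_2 p_3$; the only such triple of proper divisors is $(p_1,p_2,p_3)$ itself. Then $u' \mid p_\nu - 1$ for each $\nu$, so $u' \mid u$, and writing $u = d\,u'$ gives $d \mid r'_\nu$ for all $\nu$, whence $d = 1$ by pairwise coprimality. This forces $(\rr',u') = (\rr,u)$.
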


As a result of Theorem~\ref{thm:main}, we have for any $\rr \in \RS$ that
\[
  U_\rr(t) \in \SD \quad (t \geq \tau),
\]
where $\tau \in \set{1,2}$. Moreover,
\begin{equation} \label{eq:Ur-CP}
  U_\rr(t) = p_1 \cdot p_2 \cdot p_3 \impliesq
    U_\rr(t) \in \CP_3 \quad (t \geq \tau),
\end{equation}
when $p_1$, $p_2$, and $p_3$ are odd primes.

In the complementary cases $0 \leq t < \tau$, Theorem~\ref{thm:main2} predicts
that $U_{\rr}(t) \in \CP_3$ can only happen when $t=0$. Table~\ref{tbl:except}
shows the first of those values with parameters $\rr$ and $(\tau,t)$.

The remaining values, where $U_\rr(t) \notin \SD$ for $0 \leq t < \tau$, can be
viewed as exceptions. The next theorem clarifies these cases in the context of
Carmichael numbers $m \in \CN_3 \setminus \CP_3$.

\begin{theorem} \label{thm:except}
If $m \in \CN_3 \setminus \CP_3$, then we have
\[
  m \in (\SDG \cap \SL) \setminus \SD,
\]
where the greatest prime divisor $p$ of $m$ satisfies
\begin{equation} \label{eq:cond-p}
  s_p(m) = p.
\end{equation}
Moreover, there exist a unique $\rr \in \RS$, as defined in
Theorem~\ref{thm:Chernick3}, and an integer $t$ such that
\[
  m = U_{\rr}(t)
\]
with $0 \leq t < \tau$, where $\tau \in \set{1,2}$ is defined as in
Theorem~\ref{thm:main}.

In the case $(\tau,t) = (2,1)$, property \eqref{eq:cond-p} also holds for the
second greatest prime divisor $p$ of $m$.
\end{theorem}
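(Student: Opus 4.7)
I will assemble Chernick's parametrisation (Theorem~\ref{thm:Chernick3}) together with Theorems~\ref{thm:SD-prop}, \ref{thm:main}, and~\ref{thm:main2}; after that, all assertions follow by bookkeeping. First I would invoke Theorem~\ref{thm:Chernick3} to obtain a unique $\rr \in \RS$ and integer $t \geq 0$ with $m = U_\rr(t) = g_1 g_2 g_3$, where $g_\nu = r_\nu(\sigma_3 t + \ell) + 1$. Because $r_1 < r_2 < r_3$ and $u := \sigma_3 t + \ell > 0$, the factors satisfy $g_1 < g_2 < g_3$, so $g_3$ is the greatest and $g_2$ the second greatest prime divisor of $m$. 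I would then exclude the range $t \geq \tau$: in that range Theorem~\ref{thm:main}, applied to the odd primes $g_\nu$ forced by $m \in \CN_3$, would yield $m \in \CP_3$, contradicting the hypothesis. This pins $t$ to $0 \leq t < \tau$.

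To place $m$ in $(\SDG \cap \SL) \setminus \SD$, I would appeal to Theorem~\ref{thm:SD-prop}: part~(i) gives $m \in \CN \subset \SDG$, while part~(iii), which asserts $\CP_3 = \SD \cap \CN_3$, combined with $m \in \CN_3 \setminus \CP_3$ gives $m \notin \SD$. The membership $m \in \SL$ and the relation~\eqref{eq:cond-p} for the greatest prime $p = g_3$ will follow together by case-splitting on $t \in \set{0,1}$ inside Theorem~\ref{thm:main2}. When $t = 1$, part~(iii) delivers $s_{g_3}(m) = g_3$ immediately, and in fact also $s_{g_2}(m) = g_2$. When $t = 0$, part~(ii) gives the same $s_{g_3}(m) = g_3$ provided $\vartheta > 2$; the remaining possibility $\vartheta = 2$ forces $m = g_3^2$, which is incompatible with $m \in \CN$ being squarefree since $g_3 > 1$ as a prime factor of $m$.

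The concluding statement of the theorem then drops out: when $(\tau, t) = (2, 1)$ we are in the $t = 1$ case above, and Theorem~\ref{thm:main2}(iii) has already supplied $s_{g_2}(m) = g_2$, so the second greatest prime of $m$ also satisfies~\eqref{eq:cond-p}. The only step not reducible to a pure look-up is the squarefree-ness argument eliminating $\vartheta = 2$ in the $t = 0$ case; this is where I expect to spend the most care, though the argument itself is a one-liner, since Theorem~\ref{thm:main2}(ii) produces $m = g_3^2$ in that case, directly contradicting $m \in \CN$.
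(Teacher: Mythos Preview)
Your proposal is correct and follows essentially the same route as the paper's proof: Chernick's parametrisation via Theorem~\ref{thm:Chernick3}, exclusion of $t \geq \tau$ via Theorem~\ref{thm:main}, and the case split $t \in \{0,1\}$ via Theorem~\ref{thm:main2}. The only cosmetic differences are that the paper cites Theorem~\ref{thm:CN-prop} and Theorem~\ref{thm:s-decomp}(iii) directly (rather than their repackaging in Theorem~\ref{thm:SD-prop}), and it excludes $t \geq \tau$ by noting $m \notin \SD$ rather than $m \notin \CP_3$; your squarefreeness argument ruling out $\vartheta = 2$ is if anything cleaner than the paper's ``$m \in \SDG$ is no square''.
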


\begin{remark}
For several numbers $m = p_1 \cdot p_2 \cdot p_3 \in \CN_3 \setminus \CP_3$
in the case $t = 0$, property \eqref{eq:cond-p} holds, as in the case
$(\tau,t) = (2,1)$, also for $p_2$. However, the first example occurs for
\[
  m = 6\,709\,788\,961 = 337 \cdot 421 \cdot 47\,293,
\]
where \eqref{eq:cond-p} does not hold for $p_2$, as verified by
\[
  s_{p_1}(m) = p_1, \quad s_{p_2}(m) = 2p_2-1, \quad s_{p_3}(m) = p_3.
\]
\end{remark}

The first numbers $m \in \CN_3 \setminus \CP_3$ with parameters $\rr$ and $(\tau,t)$
are listed in Table~\ref{tbl:except2}. By Theorem~\ref{thm:except} such numbers
can be represented by $U_{\rr}(t)$ with certain $\rr \in \RS$ only in the cases
$0 \leq t \leq 1$, while for any $\rr \in \RS$ each $U_{\rr}(t)$ represents only
primary Carmichael numbers for $t \geq 2$ when satisfying~\eqref{eq:Ur-CP}.

Supported by computations of the ratio $C'_3(x)/C_3(x)$ in Table~\ref{tbl:distrib},
Dickson's conjecture, applied to $U_{\rr}(t)$, implies the following conjecture.

\begin{conjecture}
We have
\[
  \lim_{x \, \to \, \infty} \, \frac{C'_3(x)}{C_3(x)} \, = \, 1.
\]
\end{conjecture}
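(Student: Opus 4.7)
The plan is to apply Theorem~\ref{thm:except}, which forces every $m \in \CN_3 \setminus \CP_3$ to lie in the restricted family $\{U_\rr(t) : \rr \in \RS,\, t \in \set{0,1}\}$, and then to show that this family is sparse compared with the total set of $3$-factor Carmichael numbers. By Theorem~\ref{thm:except},
\begin{equation*}
  C_3(x) - C'_3(x) \;\leq\; N_0(x) + N_1(x),
\end{equation*}
where $N_i(x) := \#\set{\rr \in \RS : U_\rr(i) \leq x \text{ with all three factors odd prime}}$. It therefore suffices to prove that $N_0(x) + N_1(x) = o\bigl(C_3(x)\bigr)$.

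The bound for $N_1$ is straightforward. For $t = 1$, each factor satisfies $r_\nu(\sigma_3 + \ell) + 1 \geq r_\nu \sigma_3$, so $U_\rr(1) \geq \sigma_3^4$ and any contributing $\rr$ has $\sigma_3 \leq x^{1/4}$. Counting ordered pairwise-coprime triples with given product via $\sum_{n \leq x^{1/4}} 3^{\omega(n)} \ll x^{1/4}(\log x)^2$ then yields $N_1(x) = O\bigl(x^{1/4}(\log x)^2\bigr)$, which is negligible.

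The $t = 0$ case is the crux. Each $\rr$ contributes at most one potential Carmichael number $U_\rr(0) = (r_1\ell+1)(r_2\ell+1)(r_3\ell+1)$, with $\ell \in [0,\sigma_3)$ determined by \eqref{eq:param-ell}. A Brun--Selberg sieve, applied uniformly in $\rr$ to the simultaneous primality of the three linear forms in $\ell$, should yield $N_0(x) \ll x^{1/3}(\log x)^{-3}$. For a matching lower bound on $C_3(x)$ one invokes the quantitative Bateman--Horn conjecture applied to each admissible $\rr \in \RS$: for $t \geq \tau$ the count of $U_\rr(t) \leq x$ with all three factors prime is conjecturally $\sim c_\rr\, x^{1/3}/(\sigma_3(\log x)^3)$, and by Theorem~\ref{thm:main} every such $U_\rr(t)$ is automatically primary. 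Summing over admissible $\rr$ with $\sigma_3 \leq x^{1/3}$ introduces an extra logarithmic factor through $\sum_{\sigma_3 \leq x^{1/3}} c_\rr/\sigma_3 \gg \log x$, giving $C_3(x) \geq C'_3(x) \gg x^{1/3}/(\log x)^2$; hence $(N_0 + N_1)/C_3 \ll 1/\log x \to 0$.

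The main obstacle is twofold: first, implementing the Brun--Selberg sieve uniformly in the modulus $\sigma_3$ up to $x^{1/3}$, with $\ell$ pinned to the single residue class of \eqref{eq:param-ell}, is technically delicate and requires strong sieve estimates beyond the classical range; second, both the upper bound on $N_0(x)$ and the lower bound on $C_3(x)$ rest ultimately on the unproved Dickson--Bateman--Horn conjecture for simultaneous primality of linear polynomials, together with the divergence of $\sum_{\rr} c_\rr/\sigma_3$. Any rigorous proof of the conjecture therefore remains conditional on strong and currently unavailable prime-distribution hypotheses.
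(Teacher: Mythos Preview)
The statement you are attempting to prove is labelled a \emph{Conjecture} in the paper, and the paper offers no proof of it. The only justification given is the sentence preceding the conjecture: ``Supported by computations of the ratio $C'_3(x)/C_3(x)$ in Table~\ref{tbl:distrib}, Dickson's conjecture, applied to $U_{\rr}(t)$, implies the following conjecture.'' This is heuristic motivation, not an argument; the paper does not spell out how Dickson's conjecture would yield the limit, and no quantitative claim about $N_0(x)$, $N_1(x)$, or a lower bound for $C_3(x)$ appears anywhere. So there is nothing in the paper to compare your proposal against.

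Your sketch goes well beyond the paper by attempting a conditional proof, and the overall architecture---use Theorem~\ref{thm:except} to confine $\CN_3\setminus\CP_3$ to the cases $t\in\{0,1\}$, then show these are sparse relative to $C_3(x)$---is exactly the right idea. The bound on $N_1(x)$ via $U_\rr(1)>\sigma_3^4$ is clean and unconditional. However, your treatment of $N_0(x)$ has a genuine gap: for fixed $\rr$ the parameter $\ell$ is a \emph{single} residue determined by \eqref{eq:param-ell}, not a variable ranging over an interval, so there is no set to sieve in the usual sense. What one would really need is an upper bound on the number of $\rr\in\RS$ with $U_\rr(0)\le x$ and all three factors prime, and that is a counting problem over the three-parameter family $\rr$ with a highly implicit constraint through $\ell(\rr)$; a Brun--Selberg sieve does not apply in any standard way here. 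Your final paragraph correctly identifies that both the upper bound on $N_0$ and the lower bound on $C_3(x)$ ultimately require quantitative prime-tuple conjectures (Bateman--Horn), so even with the gap repaired the argument would remain conditional---which is precisely why the paper states this as a conjecture rather than a theorem.
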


Due to the very special properties of the primary Carmichael numbers, one may
initially believe that these numbers play a minor role when comparing the
distributions of $C(x)$ and $C'(x)$ in Table~\ref{tbl:distrib}.
Only a closer look at the case of $3$-factor Carmichael numbers reveals
that primary Carmichael numbers play admittedly a central role in that context.

\begin{table}[H]
\begin{center} \small
\begin{tabular}{rcccrcc}
  \toprule
  \multicolumn{1}{c}{$m$} & \multicolumn{1}{c}{$\rr$} & \multicolumn{1}{c}{$(\tau,t)$} &
  \multicolumn{1}{c}{\hspace*{1em}} &
  \multicolumn{1}{c}{$m$} & \multicolumn{1}{c}{$\rr$} & \multicolumn{1}{c}{$(\tau,t)$} \\
  \midrule
  $2821$ & $(1,2,5)$ & $(1,0)$           & & $14\,469\,841$ & $(4,21,29)$ & $(1,0)$ \\
  $29\,341$ & $(1,3,5)$ & $(1,0)$        & & $15\,247\,621$ & $(1,3,23)$ & $(1,0)$ \\
  $46\,657$ & $(1,3,8)$ & $(1,0)$        & & $15\,829\,633$ & $(1,13,16)$ & $(2,0)$ \\
  $252\,601$ & $(2,3,5)$ & $(1,0)$       & & $17\,236\,801$ & $(5,7,18)$ & $(1,0)$ \\
  $1\,193\,221$ & $(1,2,21)$ & $(1,0)$   & & $17\,316\,001$ & $(1,3,40)$ & $(2,0)$ \\
  $1\,857\,241$ & $(1,6,11)$ & $(2,0)$   & & $29\,111\,881$ & $(3,4,7)$ & $(1,0)$ \\
  $5\,968\,873$ & $(1,3,26)$ & $(2,0)$   & & $31\,405\,501$ & $(1,9,10)$ & $(1,0)$ \\
  $6\,868\,261$ & $(1,5,18)$ & $(2,0)$   & & $34\,657\,141$ & $(19,42,43)$ & $(1,0)$ \\
  $7\,519\,441$ & $(1,6,19)$ & $(2,0)$   & & $35\,703\,361$ & $(5,23,176)$ & $(1,0)$ \\
  $10\,024\,561$ & $(7,27,52)$ & $(1,0)$ & & $37\,964\,809$ & $(2,7,17)$ & $(1,0)$ \\
  \bottomrule
\end{tabular}

\caption{First numbers $m = U_{\rr}(0) \in \CP_3$.}
\label{tbl:except}
\end{center}
\end{table}

\begin{table}[H]
\begin{center} \small
\begin{tabular}{rcccrcc}
  \toprule
  \multicolumn{1}{c}{$m$} & \multicolumn{1}{c}{$\rr$} & \multicolumn{1}{c}{$(\tau,t)$} &
  \multicolumn{1}{c}{\hspace*{1em}} &
  \multicolumn{1}{c}{$m$} & \multicolumn{1}{c}{$\rr$} & \multicolumn{1}{c}{$(\tau,t)$} \\
  \midrule
  $561$ & $(1,5,8)$ & $(2,0)$       & & $314\,821$ & $(1,5,33)$ & $(2,0)$ \\
  $1105$ & $(1,3,4)$ & $(1,0)$      & & $334\,153$ & $(3,7,68)$ & $(1,0)$ \\
  $2465$ & $(1,4,7)$ & $(2,0)$      & & $410\,041$ & $(5,9,17)$ & $(1,0)$ \\
  $6601$ & $(3,11,20)$ & $(1,0)$    & & $530\,881$ & $(1,8,35)$ & $(2,0)$ \\
  $8911$ & $(1,3,11)$ & $(2,0)$     & & $1\,024\,651$ & $(1,11,15)$ & $(2,0)$ \\
  $10\,585$ & $(1,7,18)$ & $(2,0)$  & & $1\,461\,241$ & $(1,2,15)$ & $(2,1)$ \\
  $15\,841$ & $(1,5,12)$ & $(2,0)$  & & $1\,615\,681$ & $(1,9,16)$ & $(2,0)$ \\
  $52\,633$ & $(1,12,17)$ & $(2,0)$ & & $1\,909\,001$ & $(2,5,23)$ & $(1,0)$ \\
  $115\,921$ & $(1,3,20)$ & $(2,0)$ & & $2\,508\,013$ & $(2,3,23)$ & $(1,0)$ \\
  $162\,401$ & $(2,5,29)$ & $(1,0)$ & & $3\,057\,601$ & $(1,5,8)$ & $(2,1)$ \\
  \bottomrule
\end{tabular}

\caption{First numbers $m = U_{\rr}(t) \in \CN_3 \setminus \CP_3$.}
\label{tbl:except2}
\end{center}
\end{table}

\vspace*{8pt}


\section{Proofs of Theorems \pref{thm:s-decomp} and \pref{thm:SL-elem}}
\label{sec:proofs-1}

Recall the definitions of Section~\ref{sec:decomp}.

\begin{lemma} \label{lem:estim-g-m}
Let $g, m \in \NN$. If $g \mid m$ and $s_g(m) \geq g$, then
\[
  1 < g < m^{1/(\ord_{g}(m)+1)} \leq \sqrt{m}.
\]
\end{lemma}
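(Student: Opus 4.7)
The plan is to split the chain of inequalities $1 < g < m^{1/(\ord_g(m)+1)} \le \sqrt{m}$ into three separate assertions and handle them in turn, the middle one being the substantive step.

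First I would dispose of $g > 1$. Since $s_1(m) = 0$ by convention, the hypothesis $s_g(m) \ge g$ rules out $g = 1$. Combined with $g \in \NN$ and $g \mid m$, this gives $g \ge 2$ and, in particular, $e := \ord_g(m) \ge 1$.

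The main step is the strict inequality $g^{e+1} < m$. I would pass to the base-$g$ expansion $m = \sum_{i=0}^{k} d_i \, g^i$ with digits $0 \le d_i \le g-1$ and $d_k \ne 0$. A standard induction on $e$ shows that $g^e \mid m$ is equivalent to $d_0 = \dotsb = d_{e-1} = 0$, so the hypothesis $\ord_g(m) = e$ forces $d_e \ge 1$ and all digits below position $e$ to vanish. Then
\[
  s_g(m) = \sum_{i=e}^{k} d_i \le (k-e+1)(g-1).
\]
If we had $k = e$, this would give $s_g(m) \le g - 1 < g$, contradicting the hypothesis $s_g(m) \ge g$. Therefore $k \ge e+1$, which together with $d_k \ge 1$ and $d_e \ge 1$ yields
\[
  m \ge d_k \, g^k + d_e \, g^e \ge g^{e+1} + g^e > g^{e+1},
\]
so $g < m^{1/(e+1)}$, as desired.

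Finally, the inequality $m^{1/(e+1)} \le \sqrt{m}$ is immediate from $e \ge 1$ (equivalently $e + 1 \ge 2$) together with $m \ge g \ge 2 > 1$. The main obstacle is purely the digit-sum counting step above; it is routine but it is the only place where the hypothesis $s_g(m) \ge g$ is actually used, and it must be handled carefully because $g$ need not be prime, so one must justify the equivalence between $\ord_g(m) = e$ and the vanishing of the last $e$ base-$g$ digits before applying the digit-sum bound.
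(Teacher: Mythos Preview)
Your proof is correct and follows essentially the same idea as the paper's, just packaged differently. Where you unpack the base-$g$ expansion digit by digit and argue that the top digit position satisfies $k \ge e+1$, the paper writes $m = g^e m'$ with $g \nmid m'$, uses the shift property $s_g(g^e m') = s_g(m')$, and observes that $m' < g$ would force $s_g(m) = m' < g$; hence $m' > g$ (strictness since $g \nmid m'$ rules out $m' = g$), giving $m > g^{e+1}$ in one line. The two arguments are equivalent---your condition $k \ge e+1$ is exactly $m' \ge g$---but the paper's formulation is a bit more economical and sidesteps the need to justify the vanishing-digit characterization of $\ord_g$.
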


\begin{proof}
Since $s_1(m) = 0$ and $s_m(m) = 1$, the conditions $g \mid m$ and $s_g(m) \geq g$
imply that $g$ is a proper divisor of $m$, and therefore $1 < g < m$.
Letting $e = \ord_g(m) \geq 1$, we can write $m = g^e \, m'$ with $g \nmid m'$.
Since $m' < g$ would imply $s_g(m) = s_g(m') < g$, it follows that $m' > g$.
As a consequence, we obtain $m > g^{e+1} \geq g^2$, showing the result.
\end{proof}

\begin{proof}[Proof of Theorem~\ref{thm:s-decomp}]
Let $m \in \SDG$. We have to show five parts.

(i).~Since $m = g_1^{e_1}$ with $e_1 \geq 1$ yields $s_{g_1}(m) = 1$,
$m$ must have at least two factors in its $s$-decomposition.
Next we consider the prime factorization $m = p^e$ with $e \geq 1$.
For any factor $g = p^\nu$ of $m$ with $1 \leq \nu \leq e$,
we infer that $s_g(m) < g$. Thus, $m$ has no $s$-decomposition in this case.
Finally, $m$ must have at least two prime factors.

(ii).~Assume that $m = g_1 \cdot g_2$ is an $s$-decomposition.
With $g_1 < g_2$ we then obtain that $s_{g_2}( m ) = s_{g_2}( g_1 ) < g_2$,
getting a contradiction. This implies that $m = g_1^{e_1} \cdot g_2^{e_2}$
must satisfy $e_1 + e_2 \geq 3$.

(iii).~We have $m = g_1 \cdot g_2 \cdot g_3$, where all $g_\nu$ are odd primes.
Assume that the $s$-decomposition of $m$ is not unique. Then by part~(i) we would
have $m = \tilde{g}_1 \cdot \tilde{g}_2$ as a further $s$-decomposition, where
$\tilde{g}_1$ is a prime and $\tilde{g}_2$ is a product of two primes, or vice
versa. But this contradicts part~(ii). Additionally, If $m \in \SD$, then $m$
also satisfies the condition to be in $\CP$, and thus $m \in \CP_3$.

(iv).~We have the inclusions $\SH \subset \SDG$ and $\SD \subset \SDG$.
If $m$ has the \mbox{$s$-decomposition} $g_1 \dotsm g_n$ with $n \geq 3$
factors, where $g_\nu$ are odd primes, then $m \in \SH$ by definition.
Similarly, if $g_1 \dotsm g_n \in \SD$ is a strict \mbox{$s$-decomposition},
then $m \in \CP_n$.

(v).~The exponent $e_\nu$ of each factor $g_\nu$ of the $s$-decomposition of
$m$ satisfies $e_\nu \leq \ord_{g_\nu}(m)$. The result then follows from
Lemma~\ref{lem:estim-g-m}.

This completes the proof of the theorem.
\end{proof}

\begin{proof}[Proof of Theorem~\ref{thm:SL-elem}]
Let $m \in \SLG$ and $g \mid m$ with $s_g(m) \geq g$.
We have to show three parts.

(i).~Assume that $m = p^e$ with $e \geq 1$. Then $g = p^\nu$ with
$1 \leq \nu \leq e$. Since $s_g(m) < g$, we get a contradiction.
Therefore $m$ must have at least two prime factors.

(ii).~We have $m \in \CN_3 \subset \SDG$. From Theorems~\ref{thm:CN-prop}
and~\ref{thm:s-decomp}~(iii), it follows that $m = p_1 \cdot p_2 \cdot p_3$ is
a unique $s$-decomposition, implying that $g$ is an odd prime.

(iii).~The inequalities follow from Lemma~\ref{lem:estim-g-m}, finishing the proof.
\end{proof}


\section{Proofs of Theorems \pref{thm:Ur-prop}, \pref{thm:main},
\pref{thm:main2}, and \pref{thm:except}}
\label{sec:proofs-2}

Let $\ZZ_p$ be the ring of $p$-adic integers, $\QQ_p$ be the field of $p$-adic
numbers, and $\pval_p(s)$ be the $p$-adic valuation of $s \in \QQ_p$.
As a basic property of $p$-adic numbers, we have
\begin{equation} \label{eq:pval}
  \pval_p(x + y) \geq \min( \pval_p(x), \pval_p(y) ) \quad (x,y \in \QQ_p),
\end{equation}
where equality holds if $\pval_p(x) \neq \pval_p(y)$
(see \cite[Sec.~1.5, pp.~36--37]{Robert:2000}).

For $x \in \RR$ we write $x = \intpart{x} + \fracpart{x}$, where $\intpart{x}$
denotes the integer part, and $0 \leq \fracpart{x} < 1$ denotes the fractional
part. Recall the definitions of $\sigma_\nu$ and $\ell$ in
\eqref{eq:sigma-1} -- \eqref{eq:param-ell}. We set $J := \set{1,2,3}$ and use
$j \in J$ as an index, mainly in the context of $\rr \in \RS$.
Before proving the theorems, we need several lemmas.

\begin{lemma} \label{lem:sigma-equal}
Let $\rr \in \RS$. If $\rr = (1,2,3)$, then $\sigma_3 = \sigma_1 = 6$;
otherwise, $\sigma_3 > \sigma_1 > 6$.
\end{lemma}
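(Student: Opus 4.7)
The plan is to dispatch the equality case by direct arithmetic and then reduce the remaining strict inequalities to an elementary two-case analysis on~$r_1$.

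For $\rr = (1,2,3)$, one computes $\sigma_1 = 1 + 2 + 3 = 6$ and $\sigma_3 = 1 \cdot 2 \cdot 3 = 6$, giving the stated equality.

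Now suppose $\rr \neq (1,2,3)$. Since $r_1 < r_2 < r_3$ are positive integers, $r_j \geq j$ for $j = 1,2,3$, whence $\sigma_1 \geq 6$; equality would force $(r_1, r_2, r_3) = (1,2,3)$, which is excluded, so $\sigma_1 > 6$. For the bound $\sigma_3 > \sigma_1$ I would split on whether $r_1 = 1$. In the case $r_1 = 1$, rewrite
\[
  \sigma_3 - \sigma_1 \;=\; r_2 r_3 - (1 + r_2 + r_3) \;=\; (r_2 - 1)(r_3 - 1) - 2.
\]
Since $r_2 \geq 2$ and $r_3 \geq 3$, one has $(r_2 - 1)(r_3 - 1) \geq 2$; equality holds precisely for $(r_2, r_3) = (2,3)$, i.e.\ the excluded tuple, so $\sigma_3 - \sigma_1 > 0$ here. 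In the case $r_1 \geq 2$, one has $r_2 \geq 3$ and $r_3 \geq 4$, and the crude bounds $\sigma_3 \geq 2 r_2 r_3$ and $\sigma_1 \leq 3 r_3$ combine to give $\sigma_3 - \sigma_1 \geq r_3 (2 r_2 - 3) \geq 4 \cdot 3 = 12 > 0$.

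No genuine obstacle arises; the only subtlety is correctly tracking the equality case of $(r_2 - 1)(r_3 - 1) \geq 2$ in the subcase $r_1 = 1$, in order to confirm that strict inequality is lost exactly for $\rr = (1,2,3)$. Note that coprimality of the $r_\nu$ plays no role in the lemma; only the strict inequalities $r_1 < r_2 < r_3$ and positivity are used.
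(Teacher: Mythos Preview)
Your proof is correct and follows essentially the same approach as the paper: a two-case split on the smallest entries of~$\rr$ followed by elementary inequalities. The only cosmetic difference is that the paper splits on whether $(r_1,r_2)=(1,2)$ versus $r_1 r_2 \geq 3$, while you split on $r_1=1$ versus $r_1\geq 2$; your factorization $\sigma_3-\sigma_1=(r_2-1)(r_3-1)-2$ in the first subcase is a slightly cleaner way of pinning down the equality case.
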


\begin{proof}
First we consider the triple $(1,2,r_3) \in \RS$ with $r_3 \geq 3$. We then
obtain that $\sigma_3 = 2 r_3 \geq 3 + r_3 = \sigma_1 \geq 6$, where equality
can only hold for $r_3 = 3$, respectively, $(1,2,3) \in \RS$. This shows this
case. Since $r_1 < r_2 < r_3$ for $\rr \in \RS$, there remains the case where
$r_1 r_2 \geq 3$. It follows that $\sigma_3 \geq 3 r_3 > 3 r_3 - 3 \geq \sigma_1 > 6$,
completing the proof.
\end{proof}

\begin{lemma} \label{lem:sigma-short}
Let $\rr \in \RS$ and $j \in J$. Define
\begin{equation} \label{eq:sigma-short}
  \sigma'_3 := \frac{\sigma_3}{r_j} \andq \sigma'_1 := \sigma_1 - r_j.
\end{equation}
If $r_j \geq 2$, then
\begin{equation} \label{eq:sigma-short2}
  \sigma_2 \equiv \sigma'_3 \not\equiv 0 \andq
  \sigma_1 \equiv \sigma'_1 \pmod{r_j}.
\end{equation}
\end{lemma}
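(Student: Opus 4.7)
The plan is to verify each of the three assertions directly from the definitions of $\sigma_1, \sigma_2, \sigma_3$ together with the pairwise-coprimality of the coordinates of $\rr$. The central observation is that $\sigma_3/r_j$ equals the product of the other two $r_\nu$'s, and that this same product is precisely the unique term of $\sigma_2 = r_1 r_2 + r_1 r_3 + r_2 r_3$ that does not carry $r_j$ as a factor; so everything reduces to a one-line computation modulo $r_j$.

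More concretely, the congruence $\sigma_1 \equiv \sigma'_1 \pmod{r_j}$ is immediate from $\sigma'_1 = \sigma_1 - r_j$. For the first congruence I would write $\{i,k\} = J \setminus \{j\}$, so that
\[
  \sigma_2 = r_i r_k + r_j(r_i + r_k),
  \quad
  \sigma'_3 = \frac{r_1 r_2 r_3}{r_j} = r_i r_k,
\]
and the congruence $\sigma_2 \equiv \sigma'_3 \pmod{r_j}$ drops out upon reducing the $r_j$-multiple term. The inequivalence $\sigma'_3 \not\equiv 0 \pmod{r_j}$ then follows because $r_j \geq 2$ picks up some prime $p \mid r_j$, and pairwise coprimality forces $p \nmid r_i$ and $p \nmid r_k$, hence $p \nmid r_i r_k = \sigma'_3$; so $r_j \nmid \sigma'_3$.

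There is no genuine obstacle here: the lemma is bookkeeping that isolates, for later use, the precise factorization structure of $\sigma_2$ and $\sigma_3$ modulo a single coordinate $r_j$. The only place where the hypothesis is actually used is in the step $r_j \nmid r_i r_k$, where $r_j \geq 2$ is needed (otherwise $r_j = 1$ would trivially divide $\sigma'_3$, and the non-vanishing assertion would fail). Once the lemma is written this way, it will serve in subsequent arguments as the identity $\sigma_2 \equiv \sigma_3/r_j \pmod{r_j}$ with the guarantee that this residue is a unit modulo $r_j$, which is presumably what is needed to invert $\sigma_2$ locally and justify the definition~\eqref{eq:param-ell} of $\ell$ (as foreshadowed by the remark preceding Lemma~\ref{lem:sigma-ell}).
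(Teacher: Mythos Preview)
Your proof is correct and follows essentially the same approach as the paper's own proof, which simply notes that $\sigma_2 \equiv \sigma'_3 \not\equiv 0 \pmod{r_j}$ follows from the definitions of $\sigma_2$ and $\sigma_3$ together with the pairwise coprimality of the $r_\nu$, and that $\sigma_1 \equiv \sigma'_1 \pmod{r_j}$ holds by definition. You have merely spelled out the one-line observation in more detail by writing $\sigma_2 = r_i r_k + r_j(r_i + r_k)$ and $\sigma'_3 = r_i r_k$ explicitly.
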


\begin{proof}
Let $r_j \geq 2$. One observes by \eqref{eq:sigma-2} and \eqref{eq:sigma-3} that
\[
  \sigma_2 \equiv \sigma'_3 \not\equiv 0 \pmod{r_j},
\]
since the integers $r_1$, $r_2$, and $r_3$ are pairwise coprime.
The congruence $\sigma_1 \equiv \sigma'_1 \pmod{r_j}$ follows from the definition.
\end{proof}

\begin{lemma} \label{lem:sigma-ell}
Let $\rr \in \RS$ and the parameter $\ell$ be defined as in~\eqref{eq:param-ell} by
\[
  \ell \equiv - \frac{\sigma_1}{\sigma_2} \pmod{\sigma_3},
\]
where $0 \leq \ell < \sigma_3$. The congruence is always solvable,
since $\sigma_2$ is invertible $\pmods{\sigma_3}$. In particular,
\begin{equation} \label{eq:ell-0}
  \ell = 0 \iffq \rr = (1,2,3).
\end{equation}
\end{lemma}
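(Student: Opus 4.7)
The plan is to first establish invertibility of $\sigma_2 \pmods{\sigma_3}$, then dispatch the equivalence \eqref{eq:ell-0} by a direct size argument using Lemma~\ref{lem:sigma-equal}.

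For the invertibility, I would prove that $\gcd(\sigma_2, \sigma_3) = 1$. Since $\sigma_3 = r_1 r_2 r_3$, it suffices to show $\gcd(\sigma_2, r_j) = 1$ for each $j \in J$. If $r_j = 1$ this is trivial; otherwise $r_j \geq 2$ and Lemma~\ref{lem:sigma-short} already gives $\sigma_2 \equiv \sigma_3/r_j \pmod{r_j}$, and the right-hand side is the product of the two remaining coordinates of $\rr$, which are coprime to $r_j$ by the definition of $\RS$. Hence $\sigma_2$ is a unit modulo each $r_j$, and therefore modulo $\sigma_3$. This shows that the congruence $\sigma_2 \, \ell \equiv -\sigma_1 \pmod{\sigma_3}$ has a unique solution $\ell$ with $0 \leq \ell < \sigma_3$, so $\ell$ is well defined.

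For the equivalence, note that by construction $\ell = 0$ if and only if $\sigma_2 \cdot 0 \equiv -\sigma_1 \pmod{\sigma_3}$, that is, $\sigma_3 \mid \sigma_1$. The case $\rr = (1,2,3)$ gives $\sigma_1 = \sigma_3 = 6$, so certainly $\sigma_3 \mid \sigma_1$ and $\ell = 0$. Conversely, if $\rr \neq (1,2,3)$, Lemma~\ref{lem:sigma-equal} yields $\sigma_3 > \sigma_1 > 0$, so $\sigma_3 \nmid \sigma_1$ and hence $\ell \neq 0$. This settles \eqref{eq:ell-0}.

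There is no real obstacle here: the only subtlety worth flagging is that the coprimality argument must be applied to each $r_j$ separately (so that the case $r_1 = 1$ is handled trivially rather than via Lemma~\ref{lem:sigma-short}, which assumed $r_j \geq 2$), and the \emph{only if} direction relies crucially on the strict inequality $\sigma_1 < \sigma_3$ from Lemma~\ref{lem:sigma-equal} to rule out $\sigma_3 \mid \sigma_1$ when $\rr \neq (1,2,3)$.
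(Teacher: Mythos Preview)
Your proposal is correct and follows essentially the same route as the paper's proof: both establish invertibility of $\sigma_2$ modulo $\sigma_3$ by checking $\sigma_2 \not\equiv 0 \pmod{r_j}$ for each $r_j \geq 2$ via Lemma~\ref{lem:sigma-short} (handling $r_j = 1$ trivially), and both deduce the equivalence~\eqref{eq:ell-0} from Lemma~\ref{lem:sigma-equal} by observing that $\ell = 0 \iff \sigma_3 \mid \sigma_1$ together with $0 < \sigma_1 \leq \sigma_3$, equality holding only for $\rr = (1,2,3)$.
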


\begin{proof}
By \eqref{eq:sigma-short2} we have for $j \in J$ and $r_j \geq 2$ that
\begin{equation} \label{eq:sigma-2-mod}
  \sigma_2 \not\equiv 0 \pmod{r_j}.
\end{equation}
Note that in case $r_1 = 1$ we have to consider $\sigma_3 = r_2 r_3$ with two
factors instead of $\sigma_3 = r_1 r_2 r_3$. Since the integers $r_j$ are
pairwise coprime, it follows that $\sigma_2$ is invertible $\pmods{\sigma_3}$
by~\eqref{eq:sigma-2-mod}.
Therefore, $\ell = 0$ if and only if $\sigma_1 \equiv 0 \pmod{\sigma_3}$.
As $\sigma_3 \geq \sigma_1 > 0$ and $\sigma_3 = \sigma_1$ if and only if $\rr = (1,2,3)$
by Lemma~\ref{lem:sigma-equal}, relation~\eqref{eq:ell-0} follows.
\end{proof}

\begin{lemma} \label{lem:sigma-eta}
If $\rr \in \RS$ and $j \in J$, then
\begin{equation} \label{eq:eta-def}
  \eta := \frac{\sigma_1}{r_j} + \frac{\ell \sigma_3}{r_j^2} \geq 2
\end{equation}
is an integer, and the bound is sharp. In particular, $\eta = 2$ holds for $j=3$
in both cases $\ell = 0$ and $\ell \neq 0$ by $\rr = (1,2,3)$ and $\rr = (1,2,7)$,
respectively.
\end{lemma}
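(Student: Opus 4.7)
The plan is to establish three items in order: integrality of $\eta$, the lower bound $\eta \geq 2$, and sharpness of that bound in both cases $\ell = 0$ and $\ell \neq 0$.

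For the integrality, I would rewrite $\eta = (\sigma_1 + \ell \sigma'_3)/r_j$, where $\sigma'_3 := \sigma_3/r_j$. Since $\sigma_3 = r_1 r_2 r_3$, the quotient $\sigma'_3$ is always an integer. The case $r_j = 1$ is immediate. For $r_j \geq 2$, I would reduce the defining congruence $\ell \sigma_2 \equiv -\sigma_1 \pmod{\sigma_3}$ modulo $r_j$ (using $r_j \mid \sigma_3$), and then invoke the identity $\sigma_2 \equiv \sigma'_3 \pmod{r_j}$ from Lemma~\ref{lem:sigma-short}. This yields $\ell \sigma'_3 \equiv -\sigma_1 \pmod{r_j}$, that is, $\sigma_1 + \ell \sigma'_3 \equiv 0 \pmod{r_j}$, so $\eta \in \ZZ$.

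For the bound $\eta \geq 2$, since $\ell \geq 0$ and $\sigma_3 > 0$ the second summand of $\eta$ is nonnegative, so $\eta \geq \sigma_1/r_j$. Because $r_j \leq r_3 < r_1 + r_2 + r_3 = \sigma_1$, we get $\eta > 1$, and since $\eta$ is an integer this forces $\eta \geq 2$.

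For sharpness, I would exhibit the two asserted examples. For $\rr = (1,2,3)$ one has $\sigma_1 = \sigma_3 = 6$ and $\ell = 0$, so at $j=3$ one obtains $\eta = 6/3 + 0 = 2$. For $\rr = (1,2,7)$, the parameters $(\sigma_1,\sigma_3,\ell) = (10,14,2)$ from Table~\ref{tbl:Ur-2} give at $j=3$ the value $\eta = 10/7 + 2 \cdot 14/49 = 14/7 = 2$. The only non-routine step is the integrality argument, whose crux is using Lemma~\ref{lem:sigma-short} to pass from the global congruence modulo $\sigma_3$ to the local one modulo the selected factor $r_j$; everything else is a direct computation.
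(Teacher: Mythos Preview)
Your proposal is correct and follows essentially the same approach as the paper's proof: both handle $r_j=1$ trivially, reduce the defining congruence for $\ell$ modulo $r_j$ via Lemma~\ref{lem:sigma-short} to obtain $\sigma_1 + \ell\sigma'_3 \equiv 0 \pmod{r_j}$, bound $\eta \geq \sigma_1/r_j > 1$ to force $\eta \geq 2$, and verify sharpness with the same two explicit triples. The only cosmetic difference is that the paper routes the congruence through $\ell \equiv -\sigma'_1/\sigma'_3 \pmod{r_j}$ before clearing denominators, whereas you multiply first and substitute $\sigma_2 \equiv \sigma'_3$; the content is identical.
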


\begin{proof}
If $r_j = 1$, then $\eta$ is integral. Assume that $r_j \geq 2$. Using
Lemmas~\ref{lem:sigma-short} and~\ref{lem:sigma-ell}, we obtain
\begin{equation} \label{eq:ell-congr}
  \ell \equiv - \frac{\sigma_1}{\sigma_2}
    \equiv - \frac{\sigma'_1}{\sigma'_3} \pmod{r_j}.
\end{equation}
For the reduced numerator of $\eta$ we then infer that
\[
  \sigma_1 + \ell \sigma'_3 \equiv
  \sigma'_1 - \frac{\sigma'_1}{\sigma'_3} \sigma'_3 \equiv 0 \pmod{r_j},
\]
implying that $\eta$ is integral. For any $r_j \geq 1$, we have
$\eta \geq \sigma_1 / r_j = 1 + \sigma'_1 / r_j > 1$, so $\eta \geq 2$.
In particular, one computes $\eta = 2$ for $r_3$ by taking $\rr = (1,2,3)$ and
$\rr = (1,2,7)$ from Tables~\ref{tbl:Ur-1} and~\ref{tbl:Ur-2}, respectively.
Both examples incorporate the cases $\ell = 0$ and $\ell \neq 0$.
This completes the proof.
\end{proof}

\begin{lemma} \label{lem:sigma-alpha}
Let $\rr \in \RS$ and $j \in J$ where $r_j \geq 2$. Define
\[
  \alpha := \frac{\sigma_3}{r_j^3} \andq
  \beta  := \frac{\sigma_3}{r_j^3} - \frac{\sigma_1}{r_j} + 1.
\]
Then $\alpha, \beta, \alpha+\beta \in \ZZ/r_j^2 \setminus \ZZ$ are fractions.
\end{lemma}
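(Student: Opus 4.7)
The plan is to put all three quantities over the common denominator $r_j^2$ and then track divisibility by $r_j$. Since $\sigma_3 = r_1 r_2 r_3$ contains $r_j$ as a factor, I would set $\sigma_3' := \sigma_3/r_j$ as in \eqref{eq:sigma-short}, noting that $\sigma_3' = \prod_{k \ne j} r_k$ is coprime to $r_j$ by the pairwise coprimality of the $r_k$ (equivalently, by $\sigma_3' \not\equiv 0 \pmod{r_j}$ from Lemma~\ref{lem:sigma-short}). Elementary manipulation then yields
\[
  \alpha = \frac{\sigma_3'}{r_j^2}, \quad
  \beta = \frac{\sigma_3' - \sigma_1 r_j + r_j^2}{r_j^2}, \quad
  \alpha+\beta = \frac{2\sigma_3' - \sigma_1 r_j + r_j^2}{r_j^2},
\]
which at once shows $r_j^2 \alpha$, $r_j^2 \beta$, and $r_j^2 (\alpha + \beta)$ all lie in $\ZZ$, so each of $\alpha$, $\beta$, $\alpha+\beta$ belongs to $\ZZ/r_j^2$. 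The remaining task is to verify that none of the three is an integer, i.e.\ that none of the three numerators is divisible by $r_j^2$.

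For $\alpha$ and $\beta$ this is immediate: reducing the numerators modulo $r_j$ annihilates the $\sigma_1 r_j$ and $r_j^2$ contributions, leaving the residue $\sigma_3' \not\equiv 0 \pmod{r_j}$. Hence neither numerator is divisible even by $r_j$, let alone by $r_j^2$, so $\alpha, \beta \notin \ZZ$.

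For $\alpha + \beta$ the same reduction produces $2\sigma_3' \pmod{r_j}$. When $r_j$ is odd, this is nonzero since $\gcd(\sigma_3', r_j) = 1$, and we conclude as before. The only delicate case is $r_j = 2$, where $r_j \mid 2\sigma_3'$ holds trivially and the mod-$r_j$ argument collapses; this is the step I expect to require the most care. I would handle it by a parity count one level up: when $r_j = 2$, the other two components $r_k, r_l$ are both odd (being coprime to $2$), so $\sigma_3' = r_k r_l$ is odd while $\sigma_1 = 2 + r_k + r_l$ is even. Consequently the numerator satisfies $2\sigma_3' - 2\sigma_1 + 4 \equiv 2 \pmod{4}$, so $4 = r_j^2$ does not divide it, and $\alpha + \beta \notin \ZZ$ in this case as well. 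Apart from this single parity check, the whole argument is pure bookkeeping with the common denominator $r_j^2$.
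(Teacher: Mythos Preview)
Your argument is essentially the paper's own proof, recast in elementary modular arithmetic where the paper uses $p$-adic valuations. The one slip is in your case split for $\alpha+\beta$: you treat only ``$r_j$ odd'' and ``$r_j=2$'', but $r_j$ can be even and larger than $2$ (e.g.\ $\rr=(1,3,8)$ or $\rr=(1,3,4)$ from Tables~\ref{tbl:Ur-1} and~\ref{tbl:Ur-2}). The fix is immediate---since $\gcd(\sigma_3',r_j)=1$, the residue $2\sigma_3'\pmod{r_j}$ vanishes precisely when $r_j\mid 2$, so the delicate case really is $r_j=2$ alone---but as written your dichotomy does not justify this. The paper sidesteps the issue by working prime by prime: for each prime $p\mid r_j$ it verifies $\pval_p(\alpha+\beta)<0$, splitting on $p=2$ versus $p\geq 3$ rather than on the parity of $r_j$ itself, which automatically handles composite even $r_j$.
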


\begin{proof}
By~\eqref{eq:sigma-short} rewrite $\alpha$ and $\beta$ as
\begin{equation} \label{eq:alpha-short}
  \alpha = \frac{\sigma'_3}{r_j^2} \andq
  \beta  = \frac{\sigma'_3}{r_j^2} - \frac{\sigma'_1}{r_j}.
\end{equation}
Obviously, we have $\alpha, \beta, \alpha+\beta \in \ZZ/r_j^2$.
As $r_j \geq 2$, we show that $\alpha, \beta, \alpha+\beta \notin \ZZ$.
Let $p$ be a prime divisor of $r_j$ and $e = \pval_p(r_j) \geq 1$.
Since $\sigma'_3$ and $r_j$ are coprime, it follows that
$\pval_p(\alpha) = -2e < 0$ and thus $\alpha \notin \ZZ$. In the same way,
we infer by~\eqref{eq:pval} that $\alpha - \sigma'_1/r_j = \beta \notin \ZZ$,
since $\pval_p(\alpha) < \pval_p(\sigma'_1/r_j) = \pval_p(\sigma'_1) - e$.
Next we consider
\[
  \alpha + \beta = \frac{2\sigma'_3}{r_j^2} - \frac{\sigma'_1}{r_j},
\]
where we distinguish between two cases as follows.

\Case{$p \geq 3$} From $\pval_p(2\alpha) = \pval_p(\alpha) < \pval_p(\sigma'_1/r_j)$
and using~\eqref{eq:pval}, we derive that $\alpha + \beta \notin \ZZ$.

\Case{$p = 2$} We have that $r_j$ is even. Due to $\rr \in \RS$ and the $r_\nu$
being pairwise coprime, $\sigma'_3$ and $\sigma'_1$ must be odd and even,
respectively. Hence, $\pval_p(2\alpha) = 1-2e < 0$, while
$\pval_p(\sigma'_1/r_j) \geq 1-e$. By~\eqref{eq:pval} we get
$\alpha + \beta \notin \ZZ$.

This completes the proof.
\end{proof}

\begin{lemma} \label{lem:sigma-theta}
Let $\rr \in \RS$ and $j \in J$ where $r_j \geq 2$. Let $\alpha$ and $\beta$ be
defined as in Lemma~\ref{lem:sigma-alpha}, and
$g = r_j \, ( \sigma_3 \, t + \ell ) + 1$ with $t \in \ZZ$. Define
\[
  \theta := \fracpart{\alpha} \, g - \beta.
\]
There are the following properties:
\begin{enumerate}
\item If $t \in \ZZ$, then $\theta \in \ZZ$.

\item If $t \geq 1$, then there are the inequalities
\begin{equation} \label{eq:g-inequal}
  g > \theta > 1 + \intpart{\alpha}.
\end{equation}

\item If $\rr \neq (1,2,3)$, $j = 3$, and $t=0$, then
\begin{equation} \label{eq:g-inequal2}
  g > \theta \geq 1.
\end{equation}
\end{enumerate}
\end{lemma}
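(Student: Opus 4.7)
The entire argument rests on a single identity. Expanding
\[
  \alpha g \;=\; \frac{\sigma'_3}{r_j^2}\bigl(r_j^2 \sigma'_3\, t + r_j\ell + 1\bigr)
       \;=\; (\sigma'_3)^2\, t + \frac{\sigma'_3\ell}{r_j} + \frac{\sigma'_3}{r_j^2}
\]
and regrouping via $\eta - 1 = (\sigma'_1 + \ell\sigma'_3)/r_j$ (which is an integer by Lemma~\ref{lem:sigma-eta}) together with $\beta = \sigma'_3/r_j^2 - \sigma'_1/r_j$ yields
\[
  \alpha g - \beta \;=\; (\sigma'_3)^2\, t + \eta - 1.
\]
Subtracting $\intpart{\alpha}\,g$ from both sides gives the working formula
\[
  \theta \;=\; (\sigma'_3)^2\, t + \eta - 1 - \intpart{\alpha}\, g,
\]
and since $\eta \in \ZZ$, part~(i) is immediate.

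For part~(ii), I set $s := r_j^2\fracpart{\alpha}$; Lemma~\ref{lem:sigma-alpha} guarantees $\alpha \notin \ZZ$, so $s$ is an integer in $\{1,\dots,r_j^2-1\}$ and $\intpart{\alpha} = (\sigma'_3 - s)/r_j^2$. The upper bound $\theta < g$ is equivalent to $(1 - \fracpart{\alpha}) g + \beta > 0$, and after scaling by $r_j^2$ this becomes
\[
  (r_j^2 - s)\,g + \sigma'_3 - r_j\sigma'_1 \;>\; 0.
\]
Since $r_j^2 - s \geq 1$ and $g \geq r_j^2 \sigma'_3 + 1$ for $t \geq 1$, the left side is at least $r_j(\sigma_3 - \sigma'_1) + \sigma'_3 + 1$, which is positive because $\sigma_3 > \sigma'_1$ is trivial. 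The lower bound $\theta > 1 + \intpart{\alpha}$ I treat in the same spirit: multiplying by $r_j^2$ and simplifying yields
\[
  r_j^2(\theta - 1 - \intpart{\alpha}) \;=\; s\sigma'_3 r_j^2\, t + s r_j\ell + 2s + r_j\sigma'_1 - 2\sigma'_3 - r_j^2,
\]
whose positivity for $t \geq 1$ follows from $s \geq 1$, $r_j \geq 2$, $\sigma'_1 \geq 1$, and $\sigma'_3 \geq 2$ (the case $\sigma'_3 = 1$ would force two of the $r_\nu$ to coincide).

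For part~(iii) I specialise to $j = 3$ and $t = 0$. From $r_1 < r_2 < r_3$ one has $r_1 r_2 \leq (r_3-1)(r_3-2) < r_3^2$, so $\intpart{\alpha} = 0$ and the working formula collapses to $\theta = \eta - 1 \geq 1$, with the last inequality supplied by Lemma~\ref{lem:sigma-eta}. For the upper bound, $g = r_3\ell + 1$ and a direct computation gives
\[
  r_3(g - \theta) \;=\; \ell(r_3^2 - r_1 r_2) + r_3 - r_1 - r_2.
\]
The hypothesis $\rr \neq (1,2,3)$ provides $\ell \geq 1$ via Lemma~\ref{lem:sigma-ell}, and combining the easy bounds $r_3^2 - r_1 r_2 \geq 3r_3 - 2$ (worst case $r_1 = r_3 - 2$, $r_2 = r_3 - 1$) and $r_3 - r_1 - r_2 \geq 3 - r_3$ gives $r_3(g - \theta) \geq 2r_3 + 1 > 0$, closing the proof.

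The conceptual content lives entirely in the identity $\alpha g - \beta = (\sigma'_3)^2 t + \eta - 1$; once one recognises this as the correct target, everything else is mechanical. The main obstacle is organisational, namely keeping the interplay of $\alpha$, $\intpart{\alpha}$, $\fracpart{\alpha}$, $s$, $\beta$, $\eta$, $\sigma'_3$, $\sigma'_1$ straight and invoking the appropriate auxiliary result (Lemma~\ref{lem:sigma-ell}, \ref{lem:sigma-eta}, or \ref{lem:sigma-alpha}) at each step.
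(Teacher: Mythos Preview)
Your proof is correct, and the organising identity
\[
  \alpha g - \beta \;=\; (\sigma'_3)^2\,t + \eta - 1
\]
is a genuine improvement over the paper's argument. The paper never writes this down: for part~(i) it establishes $\alpha g - \beta \in \ZZ$ by reducing the numerator $\sigma'_3(g-1)/r_j + \sigma'_1$ modulo~$r_j$ via the congruence $\ell \equiv -\sigma'_1/\sigma'_3 \pmod{r_j}$; for part~(ii) it bounds $\fracpart{\alpha}$ and $1-\fracpart{\alpha}$ from below by $1/r_j^2$, uses $g > r_j^4\alpha t$, and reduces each inequality to a separate estimate on~$t$; and for part~(iii) it divides the inequality $g-1 \geq \alpha g - \beta$ by $r_j\ell$ and bounds $\sigma'_3/r_j^2 + \sigma'_1/(r_j^2\ell)$ by~$1$ using $(r_j-1)(r_j-2) \geq \sigma'_3$ and $(r_j-1)+(r_j-2) \geq \sigma'_1$. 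Your route --- clearing denominators by $r_j^2$ (respectively $r_3$) and bounding the resulting integer expressions directly --- is shorter and more uniform, and part~(iii) collapses almost for free once you observe $\theta = \eta - 1$. What the paper's approach buys is that it avoids introducing $\eta$ and $s$ as auxiliary quantities in part~(ii), but this is a minor trade-off; your identity ties the lemma more tightly to Lemma~\ref{lem:sigma-eta} and makes the integrality in~(i) transparent rather than a congruence computation.
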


\begin{proof}
We implicitly use the definitions of \eqref{eq:sigma-short} and
\eqref{eq:alpha-short}. We have to show three parts.

(i).~As $r_j \geq 2$ and $t \in \ZZ$, we obtain by \eqref{eq:ell-congr} that
\begin{equation} \label{eq:g-congr}
  \frac{g-1}{r_j} \equiv \ell \equiv - \frac{\sigma'_1}{\sigma'_3} \pmod{r_j}.
\end{equation}
Since $\alpha = \intpart{\alpha} + \fracpart{\alpha}$, it suffices to show that
$\alpha g - \beta \in \ZZ$. We then infer that
\begin{equation} \label{eq:g-equal}
  \alpha g - \beta = \frac{\sigma'_3 \, (g - 1)}{r_j^2} + \frac{\sigma'_1}{r_j}.
\end{equation}
For the latter numerator in reduced form, it follows from \eqref{eq:g-congr} that
\[
  \sigma'_3 \frac{g-1}{r_j} + \sigma'_1 \equiv
    - \sigma'_3 \frac{\sigma'_1}{\sigma'_3} + \sigma'_1 \equiv 0 \pmod{r_j},
\]
implying that $\theta \in \ZZ$.

(ii).~We consider the inequalities~\eqref{eq:g-inequal}. First we show for
$t \geq 1$ that
\[
  g > \fracpart{\alpha} \, g - \beta,
\]
or equivalently that
\[
  (1 - \fracpart{\alpha}) \, g > -\beta.
\]
Note that $\beta$ can be negative, so this inequality is not trivial.
Since by Lemma~\ref{lem:sigma-alpha} $\alpha \in \ZZ/r_j^2 \setminus \ZZ$
is a fraction, we obtain that
\begin{equation} \label{eq:g-alpha-1}
  1 - \fracpart{\alpha} \geq \frac{1}{r_j^2}.
\end{equation}
For $t \geq 1$ we have
\begin{equation} \label{eq:g-alpha-2}
  g > r_j \, \sigma_3 \, t = r_j^4 \, \alpha \, t.
\end{equation}
Combining both inequalities above, we deduce that
\begin{equation} \label{eq:g-alpha-3}
  (1 - \fracpart{\alpha}) \, g > r_j^2 \, \alpha \, t.
\end{equation}
Therefore, we show the following inequality
\[
  r_j^2 \, \alpha \, t > -\beta = -\alpha + \frac{\sigma'_1}{r_j}.
\]

Let $i, k \in J \setminus \set{j}$ be the other two indices complementary to $j$.
Then the above inequality becomes
\begin{equation} \label{eq:t-estim}
  t > \frac{1}{r_j} \left( -\frac{1}{r_j} + \frac{\sigma'_1}{\sigma'_3} \right)
    = \frac{1}{r_j} \left( \frac{1}{r_i} + \frac{1}{r_k} - \frac{1}{r_j} \right)
    =: \mathfrak{f}(r_j).
\end{equation}

Since $r_i, r_k \geq 1$ but $r_i \neq r_k$, we can use the estimate
\[
  \mathfrak{g}(r_j) := \frac{1}{r_j} \left( 2 - \frac{1}{r_j} \right)
    > \mathfrak{f}(r_j) \quad (r_j \geq 2).
\]
It is easy to see that $\mathfrak{g}(r_j)$ is strictly decreasing for $r_j \geq 2$.
Hence, $\mathfrak{g}(2) = 3/4 > \mathfrak{f}(r_j)$ for $r_j \geq 2$, implying
that \eqref{eq:t-estim} holds for $t \geq 1$.
Finally, putting all together yields for $t \geq 1$ that
\[
  (1 - \fracpart{\alpha}) \, g > r_j^2 \, \alpha \, t > -\beta.
\]

Now we show for $t \geq 1$ that
\[
  \fracpart{\alpha} \, g - \beta > 1 + \intpart{\alpha}.
\]
Since both sides of the above inequality lie in $\ZZ$, we can also write
\[
  \fracpart{\alpha} \, g > 1 + \alpha + \beta.
\]
By the same arguments, the inequalities \eqref{eq:g-alpha-1} and
\eqref{eq:g-alpha-3} are also valid for $\fracpart{\alpha}$ in place of
$1 - \fracpart{\alpha}$. In view of \eqref{eq:g-alpha-2}, we then have
\[
  \fracpart{\alpha} \, g > r_j^2 \, \alpha \, t.
\]
Hence, we proceed in showing that
\[
  r_j^2 \, \alpha \, t > 1 + \alpha + \beta = 1 + 2\alpha - \frac{\sigma'_1}{r_j}.
\]
This turns into
\begin{equation} \label{eq:t-estim2}
  t > \frac{1}{\sigma'_3} + \frac{2}{r_j^2} - \frac{\sigma'_1}{r_j \sigma'_3}
    = A + B - C =: S.
\end{equation}
Since $\sigma'_3 \geq 2$ and $r_j \geq 2$, we obtain the estimates
\[
  A \leq \tfrac{1}{2}, \quad B \leq \tfrac{1}{2}, \andq C > 0.
\]
As a consequence, we infer that $S < 1$, and thus \eqref{eq:t-estim2} holds for
$t \geq 1$. Again, putting all together yields for $t \geq 1$ that
\[
  \fracpart{\alpha} \, g > r_j^2 \, \alpha \, t > 1 + \alpha + \beta,
\]
finally showing the inequalities \eqref{eq:g-inequal}.

(iii).~We consider the case where $\rr \neq (1,2,3)$, $j = 3$, and $t=0$.
Therefore $r_j \geq 4$, and $\ell \geq 1$ by Lemma~\ref{lem:sigma-ell}.
Since $r_1 < r_2 < r_3$, we have $\alpha = \sigma'_3/r_j^2 < 1$ and so
$\fracpart{\alpha} = \alpha$.
By $\alpha g - \beta \in \ZZ$ the inequalities \eqref{eq:g-inequal2} become
\[
  g-1 \geq \alpha g - \beta \geq 1
\]
where
\[
  g = r_j \, \ell + 1.
\]
From \eqref{eq:g-equal} we deduce that
\begin{equation} \label{eq:g-alpha-beta}
  \alpha g - \beta = \frac{\sigma'_3 \, \ell + \sigma'_1}{r_j} > 0,
\end{equation}
implying that $\alpha g - \beta \geq 1$. There remains to show that
$g-1 \geq \alpha g - \beta$. After dividing by $g-1 = r_j \, \ell$, we obtain
\begin{equation} \label{eq:s13-inequal}
  1 \geq \frac{\sigma'_3}{r_j^2} + \frac{\sigma'_1}{r_j^2 \, \ell}.
\end{equation}
Since $\ell \geq 1$, we continue with
\[
  S' := \frac{\sigma'_3 + \sigma'_1}{r_j^2}.
\]

From $r_j > 3$ and using the inequalities
\begin{align*}
  (r_j-1)+(r_j-2) &\geq \sigma'_1, \\
  (r_j-1)(r_j-2)  &\geq \sigma'_3,
\end{align*}
we infer that
\[
  S' \leq \frac{r_j^2 - r_j - 1}{r_j^2} = 1 - \frac{1}{r_j} - \frac{1}{r_j^2} < 1,
\]
implying that \eqref{eq:s13-inequal} holds and so $g-1 \geq \alpha g - \beta$.
This finally shows the inequalities \eqref{eq:g-inequal2}, completing the proof.
\end{proof}

Now we are ready to give the proofs of the theorems.

\begin{proof}[Proof of Theorem~\ref{thm:Ur-prop}]
Let $\rr \in \RS$. By \eqref{eq:Ur-def} and \eqref{eq:Ur-prod} we consider
\begin{equation} \label{eq:Ur-prods}
  U_{\rr}(t) = \prod_{j=1}^{3} \, ( r_j \, ( \sigma_3 \, t + \ell ) + 1 )
    = \prod_{j=1}^{3} \, (a_j \, t + b_j).
\end{equation}
Expanding the first product of~\eqref{eq:Ur-prods} yields
\begin{equation} \label{eq:Ur-sum}
  U_{\rr}(t) - 1 = \sum_{j=1}^{3} \, \sigma_j \, ( \sigma_3 \, t + \ell )^j.
\end{equation}
We have to show two parts.

(i).~Comparing both products of \eqref{eq:Ur-prods}, we infer that
\[
  U_{\rr}(0) = 1 \iffq \ell = 0 \iffq b_j = 1 \quad (j \in J),
\]
and from Lemma~\ref{lem:sigma-ell} it follows that
$\ell = 0$ if and only if $\rr = (1,2,3)$.

Now let $\rr = (1,2,3)$. We have $(\sigma_1,\sigma_2,\sigma_3) = (6,11,6)$.
Since $\ell = 0$ and $\sigma_1 = \sigma_3$, we deduce from \eqref{eq:Ur-sum} that
\[
  U_{\rr}(t) - 1 = \sigma_3^2 A(t) \withq A(t)
    := t \, ( \sigma_3^2 \, t^2 + \sigma_2 \, t + 1 ).
\]
For any $t \in \ZZ$ we obtain
\begin{align*}
  A(t) &\equiv t \, (1+t) \equiv 0 \pmod{2},
\shortintertext{while only for $t \not\equiv -1$ $\pmods{3}$ we have}
  A(t) &\equiv t \, (1-t) \equiv 0 \pmod{3}.
\end{align*}
This finally implies that
\begin{align}
  2\sigma_3^2 &\mid U_{\rr}(t) - 1, \label{eq:Ur-div-1} \\
\shortintertext{and if $t \not\equiv -1$ $\pmods{3}$ that}
  \sigma_3^3 &\mid U_{\rr}(t) - 1, \nonumber
\end{align}
implying the two claimed congruences. From~\eqref{eq:Ur-div-1} we then derive that
\[
  U_{\rr}(t) \equiv 1 \pmod{8}.
\]
Thus, $U_{\rr}(t)$ is odd for all $t \in \ZZ$.

(ii).~Let $\rr \neq (1,2,3)$. Then we have $0 < \ell < \sigma_3$ and
by~\eqref{eq:Ur-prods} that $b_j \neq 1$ $(j \in J)$. Using the substitution
$\lambda = \sigma_3 \, t + \ell \neq 0$ for any $t \in \ZZ$,
we obtain by \eqref{eq:Ur-sum} that
\begin{equation} \label{eq:Ur-B}
  U_{\rr}(t) - 1 = \lambda \, B(t) \withq B(t)
    := \sigma_3 \, \lambda^2 + \sigma_2 \, \lambda + \sigma_1.
\end{equation}
Furthermore, it follows from Lemma~\ref{lem:sigma-ell} that
\[
  B(t) \equiv \sigma_2 \, \ell + \sigma_1 \equiv 0 \pmod{\sigma_3}.
\]
Hence, we infer that
\begin{equation} \label{eq:Ur-div-2}
  \sigma_3 \, \lambda \mid U_{\rr}(t) - 1,
\end{equation}
where $\lambda = \ell$ if $t=0$, $\lambda = \sigma_3+\ell$ if $t=1$, and
$\gcd(\sigma_3,\ell) \mid \lambda$ in any case.
This implies the claimed congruences
\begin{align}
  U_{\rr}(0) &\equiv 1 \pmod{\sigma_3 \, \ell}, \nonumber \\
  U_{\rr}(1) &\equiv 1 \pmod{\sigma_3 \, (\sigma_3 + \ell)}, \nonumber \\
  U_{\rr}(t) &\equiv 1 \pmod{\sigma_3 \gcd(\sigma_3,\ell)}. \label{eq:Ur-congr}
\end{align}

If $\sigma_3$ is even, then \eqref{eq:sigma-even} implies that $2 \mid \ell$,
and so $2 \mid \lambda$. We then derive from~\eqref{eq:Ur-div-2} that
\[
  U_{\rr}(t) \equiv 1 \pmod{4}
\]
and $U_{\rr}(t)$ is odd for all $t \in \ZZ$.

Otherwise, $\sigma_3$ is odd. In this case it follows from~\eqref{eq:sigma-odd}
that $\sigma_1$ and $\sigma_2$ are also odd.
With that we infer from~\eqref{eq:Ur-B} that
\[
  B(t) \equiv 1 \pmod{2},
\]
regardless of the parity of $\lambda$, and therefore valid for all $t \in \ZZ$.
Moreover, \eqref{eq:Ur-B} then implies that
\[
  U_{\rr}(t) \equiv 1 + \lambda \equiv 1 + t + \ell \equiv \delta(t) \pmod{2},
\]
where $\delta(t) = 1$ if $t \equiv \ell$ $\pmods{2}$, and $\delta(t) = 0$
otherwise. This shows the alternating parity of $U_{\rr}(t)$.
If $\delta(t) = 1$, then
\[
  U_{\rr}(t) \equiv 1 \pmod{2}.
\]
Together with \eqref{eq:Ur-congr}, since $\sigma_3 \gcd(\sigma_3,\ell)$ is odd,
we finally achieve
\[
  U_{\rr}(t) \equiv 1 \pmod{2^{\delta(t)} \sigma_3 \gcd(\sigma_3,\ell)},
\]
being compatible with the case $\delta(t)=0$.
This completes the proof of the theorem.
\end{proof}

\begin{proof}[Proof of Theorem~\ref{thm:main}]
Let $\rr \in \RS$ and $t \geq 0$ be an integer.
As defined in \eqref{eq:Ur-def}, write
\[
  U_\rr(t) = g_1 \cdot g_2 \cdot g_3,
\]
where the three factors are given by
\begin{equation} \label{eq:g-j-def}
  g_j = r_j \, ( \sigma_3 \, t + \ell ) + 1 \quad (j \in J)
\end{equation}
and $0 \leq \ell < \sigma_3$ by \eqref{eq:param-ell}.

Theorem~\ref{thm:Chernick2} states that $U_\rr(t)$ is a universal form.
We briefly write
\begin{equation} \label{eq:m-prod-g}
  m = g_1 \cdot g_2 \cdot g_3,
\end{equation}
keeping in mind that $m$ and the $g_j$ depend on $t$.

We have to determine an integer $\tau \in \set{1,2}$ as claimed such that
the strict sum-of-digits condition holds for $t \geq \tau$ as follows.
\begin{equation} \label{eq:s-conds-m}
  s_{g_j}( m ) = g_j \quad (j \in J).
\end{equation}
In this case, the right-hand side of \eqref{eq:m-prod-g} provides a strict
$s$-decomposition of~$m$, and thus
\[
  m = U_\rr(t) \in \SD \quad (t \geq \tau).
\]
To find the parameter $\tau$, we will derive some conditions on the parameters
$(\sigma_1,\sigma_3,\ell)$. To show condition \eqref{eq:s-conds-m}, we proceed
for each fixed $j \in J$ as follows. Let $i, k \in J \setminus \set{j}$
be the other two indices complementary to $j$. We further write
\begin{equation} \label{eq:m-reduced}
  m' = g_i \cdot g_k \andq g = g_j,
\end{equation}
noting that
\[
  s_g( m' ) = s_g( m ).
\]

Our goal is to find an expression for $m'$ in terms of $g$.
In view of \eqref{eq:g-j-def} we can effectively rewrite $g_i$ and $g_k$ as
\[
  g_\nu = r_\nu \frac{g-1}{r_j} + 1 \quad (\nu = i,k).
\]
We then derive initially the expression
\begin{equation} \label{eq:m-g-1}
  m' = \frac{\sigma_3}{r_j} \, \mleft( \frac{g-1}{r_j} \mright)^{\!\!2}
    + ( \sigma_1 - r_j ) \frac{g-1}{r_j} + 1,
\end{equation}
where all terms and fractions still yield integers. Since we need an expansion
in $g$, we finally attain to the following expression for $m'$ with rational
coefficients.
\begin{equation} \label{eq:gamma-exp}
  m' = \gamma_0 + \gamma_1 \, g + \gamma_2 \, g^2
\end{equation}
with
\begin{equation} \label{eq:gamma-def}
  \gamma_0 = \beta + 1, \quad \gamma_1 = -(\alpha+\beta), \quad \gamma_2 = \alpha,
\end{equation}
obeying
\[
  \gamma_0 + \gamma_1 + \gamma_2 = 1
\]
where
\begin{equation} \label{eq:alpha-def}
  \alpha = \frac{\sigma_3}{r_j^3}, \quad
  \beta = \frac{\sigma_3}{r_j^3} - \frac{\sigma_1}{r_j} + 1.
\end{equation}

We deduce from Lemma~\ref{lem:sigma-alpha} that
\[
  \alpha, \beta, \gamma_\nu \in
  \begin{cases}
    \ZZ, & \text{if } r_j=1, \\
    \ZZ/r_j^2 \setminus \ZZ, & \text{otherwise}.
  \end{cases}
\]

The case $r_j = 1$ can only happen when $j=1$, while the coefficients are integers.
In the other case the coefficients are fractions. However, there arises the problem
of finding a suitable $g$-adic expansion of \eqref{eq:gamma-exp} to show that
in fact $s_g(m') = g$. To proceed in this way, we let \phrase{the coefficients
$\gamma_\nu$ float}. We have to distinguish between the following two cases.

\Case{$r_j = 1$}
We rewrite \eqref{eq:gamma-exp} by \eqref{eq:gamma-def} and \eqref{eq:alpha-def} as
\[
  m' = a_0 + a_1 \, g + a_2 \, g^2
\]
with the coefficients
\begin{align}
  a_0 &= \sigma_3 - \sigma_1 + 2, \nonumber \\
  a_1 &= \lambda \, g - (2\sigma_3 - \sigma_1 + 1), \label{eq:coeff-a1} \\
  a_2 &= \sigma_3 - \lambda, \nonumber
\end{align}
and the parameter $\lambda \in \set{1,2}$.

Next we show that the integers $a_\nu$ are $g$-adic digits, so satisfying
\begin{equation} \label{eq:g-digits}
  g > a_\nu \geq 0 \quad (\nu=0,1,2),
\end{equation}
which implies that
\[
  s_g(m') = a_0 + a_1 + a_2 =
  \begin{cases}
    g,    & \text{if $\lambda = 1$},\\
    2g-1, & \text{if $\lambda = 2$}.
  \end{cases}
\]

By Lemma~\ref{lem:sigma-equal} we have the inequalities
\[
  \sigma_3 \geq \sigma_1 \geq 6,
\]
and by \eqref{eq:g-j-def} that
\begin{equation} \label{eq:coeff-g}
  g = \sigma_3 t + \ell + 1.
\end{equation}

Thus, we infer that \eqref{eq:g-digits} holds for $a_0$ and $a_2$, if $t \geq 1$
and $\lambda = 1,2$. For $a_1$ we first consider \eqref{eq:coeff-a1} with
$\lambda = 1$. The inequalities
\[
  \underbrace{\sigma_3 t + \ell + 1}_{g} \geq
  \underbrace{2\sigma_3 - \sigma_1 + 1}_{g \, - \, a_1} > 0
\]
are valid for $t \geq 2$ unconditionally, and for $t = 1$ if
$\ell \geq \sigma_3 - \sigma_1$.
Hence, \eqref{eq:g-digits} holds for $a_1$ in these two cases.

We now consider the remaining case $t = 1$ and $\ell < \sigma_3 - \sigma_1$
with $\lambda = 2$. From~\eqref{eq:coeff-a1} and~\eqref{eq:coeff-g}
we then derive the inequalities
\[
  g = \sigma_3 + \ell + 1 > a_1 = 2\ell + \sigma_1 + 1 > 0,
\]
which are valid by assumption, showing that \eqref{eq:g-digits} also holds for
$a_1$ in that case.

Finally, we achieve the conditions for the case $r_j = 1$ as
\begin{align}
  \tau &= \begin{cases}
    2, & \text{if $\ell < \sigma_3 - \sigma_1$},\\
    1, & \text{otherwise},
  \end{cases} \nonumber\\
\shortintertext{as well as}
  s_g(m') &=
  \begin{cases}
    g,    & \text{if $t \geq \tau$},\\
    2g-1, & \text{if $(\tau,t) = (2,1)$}.
  \end{cases} \label{eq:s-tau-1}
\end{align}
This completes the first case $r_j = 1$.

\Case{$r_j > 1$}
We rewrite \eqref{eq:gamma-exp} as
\[
  m' = a_0 + a_1 \, g + a_2 \, g^2
\]
with the coefficients
\begin{align*}
  a_0 &= \gamma_0 + (1 - \fracpart{\gamma_2}) \, g, \\
  a_1 &= \gamma_1 + \fracpart{\gamma_2} \, g - (1 - \fracpart{\gamma_2}), \\
  a_2 &= \gamma_2 - \fracpart{\gamma_2}. \\
\shortintertext{By~\eqref{eq:gamma-def} and~\eqref{eq:alpha-def} these equations
turn into}
  a_0 &= (1 - \fracpart{\alpha}) \, g + \beta + 1, \\
  a_1 &= \fracpart{\alpha} \, g - (\alpha - \fracpart{\alpha}) - (\beta + 1), \\
  a_2 &= \alpha - \fracpart{\alpha}. \\
\shortintertext{Since $\theta = \fracpart{\alpha} \, g - \beta \in \ZZ$ by
Lemma~\ref{lem:sigma-theta}~(i) and $\intpart{\alpha} = \alpha - \fracpart{\alpha}$,
we finally arrive at the simplified equations}
  a_0 &= g - (\theta - 1), \\
  a_1 &= \theta - (1 + \intpart{\alpha}), \\
  a_2 &= \intpart{\alpha}.
\end{align*}
One observes that the coefficients $a_\nu$ $(\nu=0,1,2)$ are integers.
Moreover, they satisfy that
\[
  a_0 + a_1 + a_2 = g.
\]
There remains to show that the coefficients $a_\nu$ are in fact proper $g$-adic
digits, implying that $s_g(m') = g$ as desired.

For $a_2$ and $t \geq 1$ this easily follows from \eqref{eq:g-j-def} and
\eqref{eq:alpha-def} so that
\[
  g = r_j \, ( \sigma_3 \, t + \ell ) + 1
    > \intpart{\sigma_3/r_j^3} = \intpart{\alpha} = a_2 \geq 0.
\]
By Lemma~\ref{lem:sigma-theta}~(ii) and~\eqref{eq:g-inequal}, we have for
$t \geq 1$ the inequalities
\[
  g > \theta > 1 + \intpart{\alpha},
\]
which finally imply that $a_0, a_1 \in \set{1, \dots, g-1}$.
As a result, we conclude in the case $r_j > 1$ that
\begin{equation}\label{eq:s-tau-2}
  \tau = 1 \andq s_g(m) = g \quad (t \geq \tau).
\end{equation}

Now we consider the special case $j=3$, $t=0$, and $\rr \neq (1,2,3)$.
By Theorem~\ref{thm:Ur-prop} we have $U_{\rr}(t) > 1$, $\ell > 0$, and $g > 1$.
Since $r_1 < r_2 < r_3$, we infer that
\[
  \alpha = \sigma_3/r_j^3 < 1.
\]
Therefore $\alpha = \fracpart{\alpha}$ and $\intpart{\alpha} = 0$.
The coefficients $a_\nu$ then become
\[
  a_0 = g - (\theta - 1), \quad a_1 = \theta - 1, \quad a_2 = 0.
\]
We can apply Lemma~\ref{lem:sigma-theta}~(iii) and obtain
by~\eqref{eq:g-inequal2} the inequalities
\[
  g > \theta \geq 1.
\]

Comparing~\eqref{eq:eta-def} and~\eqref{eq:g-alpha-beta} yields
\[
  \theta = \alpha g - \beta
    = \frac{\sigma_1}{r_j} + \frac{\ell \sigma_3}{r_j^2} - 1 = \eta - 1,
\]
where $\eta \geq 2$ by Lemma~\ref{lem:sigma-eta}.
If $\theta > 1$ or equivalently $\eta > 2$, then
\[
  g > \theta > 1,
\]
implying that $a_0, a_1 \in \set{1, \dots, g-1}$ and $s_g(m') = g$. Otherwise,
we have the case $\theta = 1$ and $\eta = 2$. This yields $m' = g$ and thus
$s_g(m') = 1$. Consequently,
\begin{equation}\label{eq:s-g-cases}
  s_g(m') =
  \begin{cases}
    1, & \text{if $\eta = 2$}, \\
    g, & \text{if $\eta > 2$}.
  \end{cases}
\end{equation}
This completes the second case $r_j > 1$.
\smallskip

Combining both cases $r_j = 1$ and $r_j > 1$ yields that
\[
  \tau =
  \begin{cases}
    2, & \text{if $r_1 = 1$ and $\ell < \sigma_3 - \sigma_1$}, \\
    1, & \text{otherwise}.
  \end{cases}
\]
As a result, if $t \geq \tau$, then
\[
  m = U_\rr(t) = g_1 \cdot g_2 \cdot g_3 \in \SD.
\]
If $g_1$, $g_2$, and $g_3$ are odd primes, then $m \in \CP_3$ by
Theorem~\ref{thm:s-decomp}~(iii). This finishes the proof of the theorem.
\end{proof}

\begin{proof}[Proof of Theorem~\ref{thm:main2}]
We continue seamlessly with the proof of Theorem~\ref{thm:main} and
consider the complementary cases
\[
  m = U_\rr(t) = g_1 \cdot g_2 \cdot g_3 \quad (0 \leq t < \tau).
\]
We have to show three parts (in order of their dependencies).

(iii).~If $(\tau,t) = (2,1)$, then we obtain by \eqref{eq:s-tau-1} and
\eqref{eq:s-tau-2} that
\[
  s_{g_1}(m) = 2g_1 - 1, \quad s_{g_2}(m) = g_2, \quad s_{g_3}(m) = g_3.
\]
Thus, $m \in \SDG$ and its $s$-decomposition
$g_1 \cdot g_2 \cdot g_3 \in \SDG \setminus \SD$.

(i).~Assume that the factors $g_\nu$ are odd primes. Theorem~\ref{thm:Chernick2}
shows that $m \in \CN_3$. If $m \in \SD$, then $m \in \CP_3$ by
Theorem~\ref{thm:s-decomp}~(iii). But if $(\tau,t) = (2,1)$,
then part~(iii) implies that $m \notin \CP_3$.

(ii).~We consider the case $t=0$ and $j=3$. We then have the equality
$\vartheta = \eta$ by \eqref{eq:eta-def}. If $\rr=(1,2,3)$, then we obtain
$m=1$ by \eqref{eq:Ur-simple} and $\eta = 2$ by Lemma~\ref{lem:sigma-eta}.
Since $s_1(m) = 0$ by definition and $g_1 = g_2 = g_3 = 1$, the result follows.
If $\rr \neq (1,2,3)$, then the implications follow from~\eqref{eq:s-g-cases}.
For $\eta = 2$ we have by \eqref{eq:m-reduced} and \eqref{eq:s-g-cases}
that $m' = g_3 = g_1 g_2$, so $m = g_3^2$. If $\eta > 2$, then
$s_{g_3}(m) = g_3$ by~\eqref{eq:s-g-cases}, and Lemma~\ref{lem:estim-g-m}
implies that $m > g_3^2$. This completes the proof of the theorem.
\end{proof}

\begin{proof}[Proof of Theorem~\ref{thm:except}]
Let $m \in \CN_3 \setminus \CP_3$, where
\[
  m = p_1 \cdot p_2 \cdot p_3
\]
with odd primes $p_1 < p_2 < p_3$. Theorem~\ref{thm:CN-prop} implies that
$m \in \SDG$. From Theorem~\ref{thm:s-decomp}~(iii), it follows that
\[
  m \notin \CP_3 \impliesq m \notin \SD.
\]
By Theorem~\ref{thm:Chernick3} there exist unique $\rr \in \RS$ and
$t \geq 0$ such that
\[
  m = U_{\rr}(t),
\]
while Theorem~\ref{thm:main} implies that $0 \leq t < \tau$ with some
$\tau \in \set{1,2}$, since $m \notin \SD$. Next we consider two cases as follows.

\Case{$t = 0$} Since $m \in \SDG$ is no square, we infer from
Theorem~\ref{thm:main2}~(ii) that~\eqref{eq:cond-p} holds for $p_3$.

\Case{$t = 1$} From Theorem~\ref{thm:main2}~(iii), it follows
that~\eqref{eq:cond-p} holds for $p_2$ and $p_3$.

Hence, both cases imply that $m \in \SL$. This finally yields
$m \in (\SDG \cap \SL) \setminus \SD$, showing the result.
\end{proof}


\section{Proofs of Theorems \pref{thm:SD-prop}, \pref{thm:SL-prop},
\pref{thm:SD-inf}, \pref{thm:SD-bound}, and \pref{thm:CS-3}}
\label{sec:proofs-3}

The remaining proofs are given in this section, since they depend on
Theorems~\ref{thm:main} and~\ref{thm:except}. Recall the definitions of
Sections~\ref{sec:decomp} and~\ref{sec:except}. In the following we use the
notation $m = p_1 \dotsm p_n$, which means that $p_1 < \dotsb < p_n$ are
odd primes.

\begin{proof}[Proof of Theorem~\ref{thm:SD-prop}]
We have to show three parts.

(i).~Theorem~\ref{thm:CN-prop} implies that $\CN \subset \SDG$ by definition.

(ii).~First we show that $\CP \subseteq \SD \cap \CN$. If $m \in \CP \subset \CN$,
then $m$ is squarefree and $m = p_1 \dotsb p_n$ with $n \geq 3$, which is a
strict $s$-decomposition by definition of $\CP$. Thus, $m \in \SD \cap \CN$.
Next we show that $\CP \neq \SD \cap \CN$. We search for a counterexample by
constructing numbers lying in $\SD$. To do so, we consider as in
\eqref{eq:Ur-simple} again
\begin{equation}\label{eq:proof-Ur-1}
  U_{\rr}(t) = (6\,t+1)(12\,t+1)(18\,t+1) \quad (\rr = (1,2,3)).
\end{equation}
As a result of Theorem~\ref{thm:main}, we have that
\begin{equation}\label{eq:proof-Ur-2}
  U_{\rr}(t) \in \SD \quad (t \geq 1).
\end{equation}
We then find $m = U_{\rr}(5)$ with its strict $s$-decomposition and prime
factorization as
\[
  m = 172\,081 = 31 \cdot 61 \cdot 91 = 7 \cdot 13 \cdot 31 \cdot 61.
\]
One verifies by Korselt's criterion that $m \in \CN$. But since $s_7(m) = 19$,
$m$ fails to be in $\CP$. This finally implies that $\CP \subset \SD \cap \CN$.

(iii).~If $m \in \CP_3 \subset \CN_3$, then $m = p_1 \cdot p_2 \cdot p_3$ is
also a strict $s$-decomposition. Therefore, $m \in \SD \cap \CN_3$.
Contrary, if $m \in \SD \cap \CN_3$, then $m \in \CP_3$ by
Theorem~\ref{thm:s-decomp}~(iii). It follows that $\CP_3 = \SD \cap \CN_3$.
This finishes the proof of the theorem.
\end{proof}

\begin{proof}[Proof of Theorem~\ref{thm:SL-prop}]
We have to show two parts.

(i).~By definition we have $\CS \subseteq \CN \setminus \CP$.
We use the first example of $\CS_4$, namely,
\[
  m = 954\,732\,853 = 103 \cdot 109 \cdot 277 \cdot 307.
\]
We have $14$ proper divisors of $m$ (excluding $1$ and $m$). By construction of
$\CS$ we have $s_p(m) \neq p$ for each prime divisor $p \mid m$. A computational
check (e.g., with \textsc{Mathematica}) of the remaining ten proper divisors
$g \mid m$ shows each time that $s_g(m) \neq g$ , so $m \notin \SL$.
Finally, it follows that $\CN \setminus \CP \not\subset \SL \setminus \SD$.

(ii).~By Theorem~\ref{thm:except} we have
$\CN_3 \setminus \CP_3 \subseteq \SL \setminus \SD$. Considering the computed
examples with only two prime factors, we find that, for example,
$6 \in \SL \setminus \SD$, while $6 \notin \CN_3 \setminus \CP_3$.
It follows that $\CN_3 \setminus \CP_3 \subset \SL \setminus \SD$.

This completes the proof of the theorem.
\end{proof}

\begin{proof}[Proof of Theorem~\ref{thm:SD-inf}]
We have to show that $\SD$ is infinite. It suffices to use the example in
\eqref{eq:proof-Ur-1}. By Theorem~\ref{thm:main} and~\eqref{eq:proof-Ur-2},
this already implies that infinitely many values of $U_{\rr}(t)$, being
strictly increasing for $t \geq 1$, lie in $\SD$.
\end{proof}

\begin{proof}[Proof of Theorem~\ref{thm:SD-bound}]
Define the real-valued function and its inverse for $x,y \in \RR_{\geq 0}$ by
\[
  f(x) := \frac{1}{11} \, x^{1/3} - \frac{1}{3}, \quad
  f^{-1}(y) = 1331 \left( y + \frac{1}{3} \right)^{\!3}.
\]
We have to show that
\begin{equation} \label{eq:estim-SD-f}
  S'(x) > f(x) \quad (x \geq 1).
\end{equation}
While $f(x)$ is strictly increasing for $x \geq 0$, the function $S'(x)$
increases stepwise, counting elements of $\SD$ less than $x$.
Considering the first values of $\SD = \set{45,96,\dotsc}$, we have
\[
  S'(1) = 0, \quad S'(46) = 1, \andq S'(97) = 2.
\]
From
\[
  f(0) = -1/3, \quad f^{-1}(0) = 49.29\dotsc, \andq f^{-1}(1) = 3154.96\dotsc,
\]
we infer that \eqref{eq:estim-SD-f} holds for $x \in [1,3154]$.
By Theorem~\ref{thm:main} and relations \eqref{eq:proof-Ur-1}
and~\eqref{eq:proof-Ur-2} we have
\[
  g(t) := (6\,t+1)(12\,t+1)(18\,t+1) \withq g(t) \in \SD \quad (t \in \NN).
\]

Note that $f^{-1}(y) > g(y)$ for $y \geq 0$, as verified by
\[
  f^{-1}(y) - g(y) = 35 \, y^3 + 935 \, y^2 + \frac{1223}{3} \, y + \frac{1304}{27}.
\]

Since $S'(x)$ increases after each $x = g(t)$ for $t \in \NN$ and $S'(97) = 2$,
we conclude for $x > g(1) = 1729$ that
\begin{align*}
  S'(x) &> 1 + \#\set{t \in \NN : g(t) < x} \\
    &\geq 1 + \#\set{t \in \NN : f^{-1}(t) < x} \\
    &\geq f(x).
\end{align*}
Combining both intervals for $x$ shows \eqref{eq:estim-SD-f} and the result.
\end{proof}

\begin{proof}[Proof of Theorem~\ref{thm:CS-3}]
By definition we have $\CS_3 \subseteq \CN_3 \setminus \CP_3$.
Let $m = p_1 \cdot p_2 \cdot p_3 \in \CN_3 \setminus \CP_3$.
Theorem~\ref{thm:except} shows that $s_{p_3}(m) = p_3$, implying that
$m \notin \CS_3$. As a consequence, we infer that $\CS_3 = \emptyset$.
This proves the theorem.
\end{proof}

\vspace*{8pt}


\section{Taxicab Numbers}
\label{sec:taxicab}

As noted in \eqref{eq:taxicab}, the smallest number which can be written as
the sum of two positive cubes in two ways is the number $1729$, known as
Ramanujan's taxicab number or the Hardy--Ramanujan number.

By Section~\ref{sec:decomp} we have the relations
\[
  1729 = 7 \cdot 13 \cdot 19 \in \CP_3 \subset \SD_* \subset \SDG_*.
\]

The $n$th taxicab number $\Ta(n)$ is defined to be the smallest number which
can be written as the sum of two positive cubes in $n$ ways. The next numbers
$\Ta(n)$ for $n=3,4$ were listed by Silverman \cite{Silverman:1993}.
Subsequently, Wilson \cite{Wilson:1999} found $\Ta(5)$, while C.~S.~Calude,
E.~Calude, and Dinneen \cite{CCD:2003} and Hollerbach \cite{Hollerbach:2008}
announced $\Ta(6)$ (see also OEIS~\cite[Seq.\,A011541]{OEIS}).
Table~\ref{tbl:taxi1} reports these numbers.

\begin{table}[H]
\begin{center} \small
\begin{tabular}{r@{$\;=\;$}l}
  \toprule
  $87\,539\,319$ & $3^3 \cdot 7 \cdot 31 \cdot 67 \cdot 223$ \\
  $6\,963\,472\,309\,248$ & $2^{10} \cdot 3^3 \cdot 7 \cdot 13 \cdot 19
    \cdot 31 \cdot 37 \cdot 127$ \\
  $48\,988\,659\,276\,962\,496$ & $2^6 \cdot 3^3 \cdot 7^4 \cdot 13 \cdot 19
    \cdot 43 \cdot 73 \cdot 97 \cdot 157$ \\
  $24\,153\,319\,581\,254\,312\,065\,344$ & $2^6 \cdot 3^3 \cdot 7^4 \cdot 13
    \cdot 19 \cdot 43 \cdot 73 \cdot 79^3 \cdot 97 \cdot 157$ \\
  \bottomrule
\end{tabular}

\caption{Taxicab numbers $\Ta(n)$ for $n=3,\dots,6$.}
\label{tbl:taxi1}
\end{center}
\end{table}

Similarly, allowing only cube-free numbers, one finds in \cite{Silverman:1993}
and \cite[Seq.\,A080642]{OEIS} the corresponding taxicab numbers $\Tc(n)$ for
$n=3,4$, as listed in Table~\ref{tbl:taxi2}.

\begin{table}[H]
\begin{center} \small
\begin{tabular}{r@{$\;=\;$}l}
  \toprule
  $15\,170\,835\,645$ & $3^2 \cdot 5 \cdot 7 \cdot 31 \cdot 37 \cdot 199 \cdot 211$ \\
  $1\,801\,049\,058\,342\,701\,083$ & $7 \cdot 31 \cdot 37 \cdot 43 \cdot 163
    \cdot 193 \cdot 9151 \cdot 18\,121$ \\
  \bottomrule
\end{tabular}

\caption{Cube-free taxicab numbers $\Tc(n)$ for $n=3,4$.}
\label{tbl:taxi2}
\end{center}
\end{table}

A quick computational check reveals that all taxicab numbers of
Tables~\ref{tbl:taxi1} and~\ref{tbl:taxi2} have a common property that
\[
  \Ta(n), \Tc(m) \in \SDG_* \setminus \SD_* \quad (n=3,\dots,6, \; m=3,4).
\]

Therefore, one may raise the following question.

\begin{question}
Is there a link between the sets $\SDG_*$, $\SD_*$ and certain integral solutions
of the elliptic curve $X^3 + Y^3 = A$?
\end{question}


\section{Polygonal Numbers}
\label{sec:poly}

The polygonal numbers (cf.~\cite[pp.~38--42]{Conway&Guy:1996}) can be defined
as follows. For any integer $h \geq 1$, define an \emph{$h$-gonal number} by
\[
  \GN^h_n = \frac{1}{2} \mleft(n^2(h-2) - n(h-4)\mright) \quad (n \geq 1).
\]
Special cases are, e.g., the triangular numbers
\begin{align*}
  \TN_n &= \GN^3_n = \binom{n+1}{2} = \frac{1}{2}n(n+1) \\
\shortintertext{and the hexagonal numbers}
  \HN_n &= \GN^6_n = \binom{2n}{2} = n(2n-1),
\end{align*}
while $\GN^4_n = n^2$ are the squares, and $\GN^2_n = \GN^n_2 = n$ give the
trivial cases. For $h = 1$ there are only the special cases
$\GN^1_1 = \GN^1_2 = 1$; otherwise, $\GN^1_n \leq 0$ for $n \geq 3$.

Recall the definition of a universal form $U_{\rr}(t)$ in \eqref{eq:Ur-def},
as well as the definitions of $\sigma_\nu$ and $\ell$ in
\eqref{eq:sigma-1}~--~\eqref{eq:param-ell}.
We further use the definitions and results of Section~\ref{sec:proofs-2}.

The following theorem shows that for any given $\rr \in \RS$
all values of $U_{\rr}(t)$ for $t \geq 0$ are polygonal numbers.

\begin{theorem} \label{thm:poly}
Let $\rr \in \RS$ and
\[
  U_\rr(t) = g_1 \cdot g_2 \cdot g_3
\]
where
\[
  g_\nu = r_\nu \, ( \sigma_3 \, t + \ell ) + 1 \quad (\nu=1,2,3).
\]
Then we have for $t \geq 0$ and $\nu=1,2,3$ the relations
\[
  U_\rr(t) = \GN^{h_\nu}_{g_\nu} \withq h_\nu = 2( c_\nu + d_\nu \, t),
\]
where $c_\nu$ and $d_\nu$ are positive integers given by
\[
  c_\nu = \frac{\sigma_1}{r_\nu} + \frac{\ell \sigma_3}{r_\nu^2} \geq 2 \andq
  d_\nu = \mleft( \frac{\sigma_3}{r_\nu} \mright)^{\!\!2} \geq 4.
\]
In particular,
\[
  h_\nu \geq
  \begin{cases}
    4,  & \mbox{if $t=0$}, \\
    12, & \mbox{if $t \geq 1$}.
  \end{cases}
\]
\end{theorem}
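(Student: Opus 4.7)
The plan is to derive the identity $U_\rr(t) = \GN^{h_\nu}_{g_\nu}$ directly, reusing the explicit expansion of $m' := U_\rr(t)/g_\nu$ already produced in the proof of Theorem~\ref{thm:main}.

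First I would verify that $c_\nu$ and $d_\nu$ are positive integers satisfying the claimed bounds. The equality $c_\nu = \eta$, obtained by setting $j = \nu$ in the definition~\eqref{eq:eta-def}, together with Lemma~\ref{lem:sigma-eta}, shows that $c_\nu \in \ZZ$ and $c_\nu \geq 2$. For $d_\nu$, since the $r_j$ are pairwise coprime we have $r_\nu \mid \sigma_3 = r_1 r_2 r_3$, and $\sigma_3/r_\nu$ equals the product of the other two $r_i$'s. Because $r_1 < r_2 < r_3$ forces that product to be at least $1 \cdot 2 = 2$, we obtain $d_\nu = (\sigma_3/r_\nu)^2 \geq 4$.

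The heart of the proof is the identity itself. Fix $\nu$, write $g = g_\nu$, and set $\lambda = \sigma_3 t + \ell$, so that $g - 1 = r_\nu \lambda$. A direct expansion of the definition of an $h$-gonal number with $h = h_\nu = 2(c_\nu + d_\nu t)$ gives
\[
  \GN^{h_\nu}_{g} \;=\; g \bigl[ (g-1)(c_\nu + d_\nu t) - (g-2) \bigr].
\]
Substituting the explicit formulas for $c_\nu$ and $d_\nu$ into the bracketed factor and using $g-1 = r_\nu \lambda$ collapses it to
\[
  (g-1)(c_\nu + d_\nu t) - (g-2) \;=\; (\sigma_1 - r_\nu)\lambda + \frac{\sigma_3}{r_\nu}\lambda^2 + 1,
\]
which is precisely the value of $m'$ read off from equation~\eqref{eq:m-g-1} in the proof of Theorem~\ref{thm:main}. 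Consequently $\GN^{h_\nu}_{g_\nu} = g_\nu \cdot m' = U_\rr(t)$. The range bounds on $h_\nu$ follow at once: at $t = 0$ we have $h_\nu = 2 c_\nu \geq 4$, and for $t \geq 1$ we find $h_\nu \geq 2 \cdot 2 + 2 \cdot 4 \cdot 1 = 12$.

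There is no substantive obstacle; the argument is essentially one of recognition. The quadratic-in-$g$ expression for $m'$ already extracted in Section~\ref{sec:proofs-2} matches exactly the normalized form $(g-1)\cdot h/2 - (g-2)$ that defines an $h$-gonal number with base $g$. Once this structural coincidence is spotted, integrality of $c_\nu$ and $d_\nu$ is automatic from the earlier lemmas, and the identity is a short, purely algebraic verification.
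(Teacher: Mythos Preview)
Your proof is correct and takes essentially the same route as the paper: both arguments reduce the identity to the expression~\eqref{eq:m-g-1} for $m' = g_i g_k$, invoke Lemma~\ref{lem:sigma-eta} for the integrality and lower bound of $c_\nu$, and observe $\sigma_3/r_\nu \geq 2$ for $d_\nu \geq 4$. The only cosmetic difference is that the paper solves $(g-1)(h-1) = m'-1$ for $h$ and arrives at $c_\nu + d_\nu t$, whereas you start from the given $h_\nu$ and verify that $\GN^{h_\nu}_{g_\nu}$ collapses to $g_\nu m'$; these are two directions of the same short computation.
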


\begin{proof}
Set $J = \set{1,2,3}$ and fix $j \in J$. Let $i, k \in J \setminus \set{j}$
with $i \neq k$. We solve for $h$ with $g = g_j$ the equation
\begin{equation}\label{eq:poly-m}
  \GN^{2h}_{g} = g \cdot g_i \cdot g_k.
\end{equation}
After some simplifications the equation turns into
\[
  (g-1)(h-1) = g_i \cdot g_k - 1.
\]
From \eqref{eq:m-reduced} and \eqref{eq:m-g-1}, we derive that
\[
  h - 1 = \frac{\sigma_3}{r_j^3} (g-1) + \frac{\sigma_1}{r_j} - 1
  = \frac{\sigma_3}{r_j^2} (\sigma_3 t + \ell) + \frac{\sigma_1}{r_j} - 1.
\]
Thus,
\[
  h = \frac{\sigma_1}{r_j} + \frac{\ell \sigma_3}{r_j^2}
  + \left( \frac{\sigma_3}{r_j} \right)^{\!\!2} t = c_j + d_j \, t.
\]
Lemma~\ref{lem:sigma-eta} shows that $c_j \geq 2$ is a positive integer.
Since $r_j \mid \sigma_3$ and $\sigma_3 \geq 6$ by Lemma~\ref{lem:sigma-equal},
we infer that $\sigma_3 / r_j \geq 2$ and so $d_j \geq 4$.
With $h_j = 2h$ and $g_j = g$, the result follows from~\eqref{eq:poly-m}.
In particular, we then obtain for $t=0$ and $t \geq 1$ the estimates
$h_j \geq 4$ and $h_j \geq 12$, respectively.
This completes the proof of the theorem.
\end{proof}

\begin{corollary}
All $3$-factor Carmichael numbers are polygonal numbers. More precisely,
if $m \in \CN_3$, then for each prime divisor $p$ of $m$ there exists a
computable integer $h \geq 6$ such that
\[
  m = \GN^h_p.
\]
\end{corollary}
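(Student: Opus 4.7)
The plan is to combine Theorem~\ref{thm:Chernick3}, which embeds any $m \in \CN_3$ into Chernick's family of universal forms, with Theorem~\ref{thm:poly}, which exhibits each such value as a polygonal number. First I would invoke Theorem~\ref{thm:Chernick3} to obtain a unique $\rr \in \RS$ and a unique integer $t \geq 0$ with $m = U_\rr(t)$, so that the three prime divisors of $m$ are precisely the factors $g_\nu = r_\nu(\sigma_3 t + \ell) + 1$ for $\nu = 1,2,3$. Next, Theorem~\ref{thm:poly} applied to this $U_\rr(t)$ yields, for each $\nu$, the identity $m = \GN^{h_\nu}_{g_\nu}$ with $h_\nu = 2(c_\nu + d_\nu t)$ an even integer, where $c_\nu \geq 2$ and $d_\nu \geq 4$. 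Note that $h_\nu$ is manifestly computable from the prime factorization of $m$, since the triple $(\rr, t)$ is recovered explicitly from $u = \gcd(p_1 - 1, p_2 - 1, p_3 - 1)$ in Theorem~\ref{thm:Chernick3}.

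The remaining work is to upgrade the bound from $h_\nu \geq 4$ to $h_\nu \geq 6$. If $t \geq 1$, Theorem~\ref{thm:poly} already gives $h_\nu \geq 12$, so there is nothing to do. For $t = 0$ I would argue that each $c_\nu$ must in fact be at least $3$. Observe that $c_\nu = \sigma_1/r_\nu + \ell \sigma_3/r_\nu^2$ is strictly decreasing in $r_\nu$, since $r_1 < r_2 < r_3$ are positive and $\sigma_1, \ell\sigma_3 \geq 0$; thus $c_1 > c_2 > c_3$, and it suffices to rule out $c_3 = 2$. This is exactly the hypothesis $\vartheta = 2$ in Theorem~\ref{thm:main2}(ii), whose conclusion is $m = g_3^2$. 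But a Carmichael number is squarefree by the Carmichael--Korselt theorem (cf.\ Theorem~\ref{thm:CN-prop}), so $m = g_3^2$ is incompatible with $m \in \CN_3$. Hence $c_3 \geq 3$, so $c_\nu \geq 3$ for each $\nu$, and therefore $h_\nu \geq 2c_\nu \geq 6$.

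The main, and essentially only, obstacle is the boundary case $t = 0$ with $c_3 = 2$; without Theorem~\ref{thm:main2}(ii) this would need to be checked by hand via the classification of the triples $\rr$ for which equality is achieved in Lemma~\ref{lem:sigma-eta}. Since Theorem~\ref{thm:main2}(ii) already identifies precisely this degenerate case as the one where $U_\rr(0)$ becomes a perfect square, the squarefreeness of Carmichael numbers dispatches it immediately, and the corollary follows.
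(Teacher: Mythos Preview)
Your proof is correct and follows the same overall structure as the paper: invoke Theorem~\ref{thm:Chernick3} to realize $m$ as $U_\rr(t)$, then apply Theorem~\ref{thm:poly} to get $m = \GN^{h_\nu}_{g_\nu}$ with $h_\nu$ even and $\geq 4$, and finally upgrade to $h_\nu \geq 6$ using squarefreeness.

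The only difference is in how you perform this last upgrade. You split into $t \geq 1$ versus $t = 0$, argue monotonicity of $c_\nu$ in $r_\nu$, and then invoke Theorem~\ref{thm:main2}(ii) to dispose of the boundary case $c_3 = 2$. The paper does this in one line without any case analysis: since $\GN^4_p = p^2$ by the very definition of $4$-gonal numbers, the value $h = 4$ would force $m = p^2$, which is impossible for a squarefree $m$. Hence $h \neq 4$, and as $h$ is even and $\geq 4$, one gets $h \geq 6$ directly. Your detour through Theorem~\ref{thm:main2}(ii) ultimately lands on the same contradiction ($m = g_3^2$), but the paper's observation bypasses the need to separate cases on $t$ or to know which $c_\nu$ is smallest.
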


\begin{proof}
Let $m \in \CN_3$. By Theorem~\ref{thm:Chernick3} there exist $\rr \in \RS$ and
$t \geq 0$ such that $m = p_1 \cdot p_2 \cdot p_3 = U_\rr(t)$.
Fix $j \in \set{1,2,3}$ and set $p = p_j$. Applying Theorem~\ref{thm:poly} yields
$m = \GN^{h}_{p}$ with a computable even integer $h \geq 4$. Since $\GN^4_p = p^2$,
the case $h = 4$ cannot occur, so we finally infer that $h \geq 6$.
\end{proof}

We can go further into this connection between polygonal numbers, universal
forms, and Carmichael numbers. Considering the factors $g_\nu$ of a number
$m$ instead of its parametric representation $m = U_{\rr}(t)$ leads to a more
general result. The following identity explains this elementary relationship
in the context of Korselt's criterion.

\begin{theorem} \label{thm:poly2}
We have the identity
\begin{equation} \label{eq:poly-ident}
  m = \GN^h_g \withq h = 2 \left( \frac{m/g-1}{g-1}+1 \right).
\end{equation}
For $g, m \in \NN$ and $g \neq 1$, the identity holds if $h \geq 1$ is
integral. There are the following statements:
\begin{enumerate}
\item The trivial cases are
\begin{align*}
  m = g \geq 2 &\iffq h = 2, \\
  m \geq 1, \; g=2 &\iffq h = m \geq 1.
\end{align*}

\item If $m$ is a Carmichael number and $g$ is a prime divisor of $m$,
      then identity~\eqref{eq:poly-ident} holds where $h \geq 6$ is even.

\item For $n \geq 3$ let $U_n(t) = g_1 \dotsm g_n$ be a universal form as
      defined in~\eqref{eq:Un-def}, where $g_\nu = a_\nu \, t + b_\nu$
      $(1 \leq \nu \leq n)$. For fixed $\nu$ and $t \geq 0$, let $m = U_n(t)$
      where $m > g = g_\nu > 1$. Then identity~\eqref{eq:poly-ident} holds
      where $h \geq 4$ is even.
\end{enumerate}
\end{theorem}

\begin{proof}
It is easy to verify that the expression $\GN^h_g$ in \eqref{eq:poly-ident}
simplifies to~$m$. Let $g, m \in \NN$ where $g \neq 1$. Since
\[
  d := \frac{m/g-1}{g-1} > -1,
\]
it follows that $h > 0$. If $h$ is integral, then $h \geq 1$ and
\eqref{eq:poly-ident} holds. We have to show three parts.

(i).~Let $g > 1$. We infer that
\[
  m=g \iffq d = 0 \iffq h = 2,
\]
showing the first equivalence.
Let $m \geq 1$. If $g = 2$, then $h = m$. Conversely, $h=m$ implies
the equation $m = 2((m/g-1)/(g-1)+1)$ with solution $g = 2$.
This shows the second equivalence.

(ii).~Let $m \in \CN$ and $g \mid m$ be a prime divisor.
From Korselt's criterion it follows that
\begin{equation} \label{eq:congr-m-g}
  m - 1 \equiv \frac{m}{g} - 1 \pmod{g-1}.
\end{equation}
Since $m > g > 1$, it follows that $d \in \NN$. The case $d = 1$ would imply
$m = g^2$, contradicting that $m$ is squarefree. Finally, this implies that
$h \geq 6$ is integral and even, showing that \eqref{eq:poly-ident} holds.

(iii).~By \eqref{eq:Un-prop} a universal form $U_n(t)$ for $n \geq 3$ satisfies
\[
  U_n(t) \equiv 1 \pmod{g_\nu - 1},
\]
whenever $g_\nu > 1$. For fixed $t \geq 0$, $m = U_n(t)$, and $g = g_\nu > 1$,
congruence \eqref{eq:congr-m-g} follows from $g \mid m$. As $m > g$, we infer that
$h \geq 4$ is integral and even, implying that \eqref{eq:poly-ident} holds.
This completes the proof of the theorem.
\end{proof}

The following example demonstrates the interplay of the preceding results.

\begin{example}
Interestingly, the parameter
\[
  \alpha = \sqrt{\frac{66\,337}{181 \cdot 733}}
    = 1 \Big/ \sqrt{2 - \frac{1}{66\,337}} = 0.7071\dotsc
\]
in Theorem~\ref{thm:CP-prop} (note that $132\,673 = 181 \cdot 733$) depends on
the number
\[
  m = 8\,801\,128\,801 = 181 \cdot 733 \cdot 66\,337 = \HN_{66\,337} \in \CP,
\]
which is the least hexagonal number $\HN_p$ in $\CP$ (see \cite{Kellner&Sondow:2021}).
Since $m \in \CP_3$, Theorem~\ref{thm:poly} furthermore implies that
\[
  m = U_\rr(0) = \GN^h_p
\]
for some $\rr \in \RS$. Indeed, by Theorem~\ref{thm:Chernick3} one finds
$\rr = (15, 61, 5528)$, $\sigma_1 = 5604$, $\sigma_3 = 5\,058\,120$, and
$\ell = 12$. A computation verifies that
\[
  p = r_3 \ell + 1 = 66\,337, \quad h
    = 2 \left( \frac{\sigma_1}{r_3} + \frac{\ell \sigma_3}{r_3^2} \right) = 6,
\]
while Theorem~\ref{thm:poly2} shows in another way that
\begin{align*}
  h &= 2 \left( \frac{181 \cdot 733 - 1}{66\,337 - 1} + 1 \right) = 6. \\
\shortintertext{A third formula follows from a $p$-adic approach by
\cite[Cor.~4.3]{Kellner&Sondow:2021} that}
  h &= 2 \left( \intpart{\frac{181 \cdot 733}{66\,337}} + 2 \right) = 6.
\end{align*}
\end{example}

As a final application of Theorem~\ref{thm:poly}, we obtain the following result
for the taxicab number $1729$.

\begin{example}
Let $\rr = (1,2,3) \in \RS$. We have $\sigma_1 = \sigma_3 = 6$ and $\ell = 0$ by
Table~\ref{tbl:Ur-1}. Theorem~\ref{thm:poly} provides the relations
\[
  U_\rr(t) = \GN^h_g \quad (t \geq 0)
\]
for
\[
  g = 6 \nu \, t + 1, \quad
  h = 2 \left( \frac{6}{\nu} + \left( \frac{6}{\nu} \right)^{\!\!2} t \right)
  \quad (\nu=1,2,3).
\]
Since $U_\rr(1) = 1729$, we obtain the unified formula
\[
  1729 = \GN^h_p
\]
for
\[
  p = 6\nu+1, \quad h = 4 \TN_{6/\nu}
    = 2 \left( \frac{6}{\nu} + \left( \frac{6}{\nu} \right)^{\!\!2} \right)
  \quad (\nu=1,2,3),
\]
which yields at once the known relations
\[
  1729 = \GN^{84}_{7} = \GN^{24}_{13} = \GN^{12}_{19}.
\]
\end{example}

\vspace*{8pt}


\end{document}